\numberwithin{equation}{section}
\theoremstyle{plain}
\newtheorem{prop}{Proposition}[section]
\newtheorem{lemm}[prop]{Lemma}
\newtheorem{theorem}[prop]{Theorem}
\newtheorem*{thrmA}{Theorem A}
\theoremstyle{definition}
\newtheorem{defi}[prop]{Definition}
\newtheorem{exam}[prop]{Example}
\newtheorem{rema}[prop]{Remark}
\renewcommand\aa{a}
\newcommand\bb{b}
\newcommand\cc{c}
\newcommand{\chart}{\mathsf{char}}
\newcommand\dd{d}
\newcommand\ff{f}
\newcommand\ffb{\lm{\ff}} % \bar{f}
\renewcommand\geq{\geqslant}
\renewcommand\gg{g}
\newcommand{\gsb}{Gr\"{o}bner--Shirshov basis}
\newcommand\hh{h}
\newcommand\HS[1]{\leavevmode\null\hspace{#1mm}}
\newcommand\id{\mathsf{Id}}
\newcommand\Id[1]{\id(#1)}% ideal
\newcommand\ii{i}
\newcommand\Irr[1]{\mathsf{Irr}(#1)}
\newcounter{ITEM}
\newcommand\ITEM[1]{\setcounter{ITEM}{#1}\leavevmode\hbox{\rm(\roman{ITEM})}}
\newcommand\jj{j}
\newcommand\kk{k}
\newcommand\lcoe[1]{\mathsf{lc}(#1)}
\renewcommand\leq{\leqslant}
\newcommand\lm[1]{\overline{#1}}
\newcommand\lnormed[2]{[#1\wdots#2]{_{_\mathsf{L}}}}
\newcommand\Lnormed[1]{[#1]{_{_\mathsf{L}}}}
\newcommand\Lnormeda[3]{[#1,\ova{#2]{_{_\mathsf{L}}}\cdot #3}}
\newcommand\mA{\mathcal{A}}
\newcommand\mB{\mathcal{B}}
\newcommand\mm{m}
\newcommand\MP[1]{\mathcal{MP}_{\kk}(#1)}% free Poisson algebra generated by #1
\newcommand\mpb{\mathsf{MPB}_{\kk}(\XX)}
\newcommand\mV{\mathcal{V}}
\newcommand\mW{\mathcal{W}}
\newcommand\nrp{normal~$\RR$-polynomial}
\newcommand\nsp{normal~$\SS$-polynomial}
\newcommand\nsps{normal~$\SS$-polynomials}
\newcommand\nn{n}
\newcommand\ov[1]{\overline{#1}}
\newcommand\ova[1]{\overrightarrow{#1}}
\newcommand\pb{\mathsf{Pb}}
\newcommand\pdots{\mathrel{\HS{0.2}{\cdot}{\cdot}{\cdot}\HS{0.2}}}
\newcommand\PMP[2]{\mathcal{MP}_{\kk}(#1\,\vert\,#2)}% free Poisson algebra generated by #1
\newcommand\pp{p}
\newcommand\qq{q}
\newcommand\quldots{ ...}
 \newcommand\RR{R}
\renewcommand\ss{s}
\renewcommand\SS{S}
\renewcommand\tt{t}
\newcommand\tl[1]{\widetilde{#1}}
\newcommand\wdots{, ...\HS{0.2}, }
\newcommand{\wt}{\mathsf{wt}}
\newcommand\WW{W}
\newcommand\xx{x}
\newcommand\XX{X}
\newcommand\YYY{Y}
\newcommand\yy{y}
\newcommand\zz{z}
\title{Word problem for finitely presented metabelian Poisson algebras
}
\author{Zerui Zhang}
\address{Z.Z., School of Mathematical Sciences, South China Normal University, Guangzhou 510631, P. R. China}
\email{\small 295841340@qq.com}
\author{Yuqun Chen$^{\sharp}$}
\address{Y.C., School of Mathematical Sciences, South China Normal University, Guangzhou 510631, P. R. China}
\email{yqchen@scnu.edu.cn}
\author{Leonid A. Bokut$^{\dagger}$}
\address{L.A.B., School of Mathematical Sciences, South China Normal University Guangzhou 510631, P. R. China; Sobolev Institute of mathematics, Novosibirsk, 630090, Russia; Novosibirsk State University, Novosibirsk 630090, Russia}
\email{bokut@math.nsc.ru}
\thanks{${}^{\ddag}$Supported by the NNSF of China (11571121), the NSF of Guangdong Province (2017A030313002) and the Science and Technology Program of Guangzhou (201707010137)}
\thanks{${}^{\sharp}$ Corresponding author}
\thanks{${}^{\dagger}$ Supported by Russian Science Foundation (project 14-21-00065)}
\keywords{Gr\"{o}bner--Shirshov basis; metabelian Poisson algebra; word problem}
\subjclass[2010]{17B63, 16S15, 13P10}
\begin{document}

\begin{abstract}
  We first construct a linear basis for a free metabelian Poisson algebra generated by an arbitrary well-ordered set. It turns out that such a linear basis  depends on the characteristic of the underlying field.
 Then we elaborate the method of Gr\"{o}bner--Shirshov bases for metabelian Poisson algebras. Finally, we show that the word problem for finitely presented metabelian Poisson algebras are solvable.
\end{abstract}

\maketitle

%%%%
\section{Introduction}\label{Intro}

We recall that a \emph{Poisson algebra} is a vector space~$\mathcal{P}$ over a field~$k$ endowed with two bilinear operations, a multiplication written~$\cdot$ and a Poisson bracket written~$(-,-)$, such that~$(\mathcal{P}, \cdot)$ is an associative commutative algebra, $(\mathcal{P}, (-,-))$ is a Lie algebra, and~$\mathcal{P}$ satisfies the \emph{Leibniz identity}
\begin{equation}\label{leibniz}
(\xx, \yy \cdot \zz) = (\xx, \yy) \cdot \zz + \yy \cdot (\xx, \zz).
\end{equation}
 A Poisson algebra~$\mathcal{P}$ is called \emph{abelian} if~$\mathcal{P}$ is just a vector space with trivial product, that is, $\xx\cdot\yy=(\xx,\yy)=0$ for all~$\xx$ and~$\yy$ in~$\mathcal{P}$.  A Poisson algebra~$\mathcal{P}$  is called \emph{metabelian} if~$\mathcal{P}$ is an extension of an abelian Poisson algebra by another abelian Poisson algebra~\cite{metabelian-poisson}. It is clear that metabelian Poisson algebras form a variety, and  therefore, free metabelian Poisson algebras exist.

It is known that the word problem for finitely presented metabelian Lie algebras are solvable, for instance, see~\cite{hall,metabelian-word-problem,word-problem}. By applying Gr\"{o}bner-Shirshov bases theory~\cite{Buchberger65} and the Hilbert Basis Theorem~\cite{hilbert-basis-theorem}, we know that the word problem for finitely presented commutative algebras are also solvable. In our previous paper~\cite{poisson-cd-lemma}, we proved that the word problem for Poisson algebras in several nontrivial special cases are solvable, and, in this paper, we continue to deal with the word problem for finitely presented metabelian Poisson algebras.  The main result we prove below is as follows:
\begin{thrmA}
 The word problem for an arbitrary finitely presented metabelian Poisson algebra is solvable.
\end{thrmA}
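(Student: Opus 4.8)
The plan is to solve the word problem by the Gr\"obner--Shirshov method, using the linear basis $\mpb$ of the free metabelian Poisson algebra $\MP\XX$ together with a monomial order on $\mpb$ refined by degree and depth; both the basis and the associated Composition--Diamond lemma are the tools I would establish first. A finitely presented metabelian Poisson algebra is $\PMP\XX\SS$ with $\XX$ and $\SS$ finite, realized as the quotient $\MP\XX/\Id\SS$. Since every element of $\MP\XX$ is a $\kk$-linear combination of basis monomials from $\mpb$, the word problem amounts to deciding, for $\ff,\gg\in\MP\XX$, whether $\ff-\gg\in\Id\SS$; equivalently, whether $\ff$ and $\gg$ share the same normal form modulo a \gsb\ of $\SS$.

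Granting the Composition--Diamond lemma for metabelian Poisson algebras---a set is a \gsb\ exactly when all of its compositions reduce to zero---I would run Shirshov's completion algorithm starting from the finite set $\SS$, repeatedly adjoining the nonzero normal forms of the compositions (intersection, inclusion, and the bracket/product compositions forced by the Leibniz identity~\eqref{leibniz}) until the set is closed. Each individual reduction is effective because $\mpb$ and the order are explicit and computable; the issue is whether the process returns something usable.

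The main obstacle is to guarantee that completion yields an \emph{effective} \gsb. Here I would exploit the defining feature of the metabelian variety: there is an abelian ideal $A$ with $\PMP\XX\SS/A$ abelian, so every product of two elements of $A$ vanishes. This makes $A$ a module over the commutative polynomial algebra generated by $\XX$, which is Noetherian by Hilbert's Basis Theorem, and it makes the quotient a finitely presented commutative structure whose word problem is already known to be solvable. The relations $\SS$ thus split into a \emph{quotient part}, governing the abelian quotient, and a \emph{module part}, presenting $A$ as a submodule of a free module over a Noetherian ring. Because both ambient objects satisfy the ascending chain condition, only finitely many genuinely new leading monomials can arise, which forces the completion of $\SS$ to stabilize into an effective \gsb. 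Controlling how the Poisson bracket and the commutative product interact---through~\eqref{leibniz}---so that every composition stays inside these two Noetherian layers is the delicate point of the argument, and it is also where the dependence on $\chart(\kk)$ recorded in $\mpb$ must be handled carefully.

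Finally, once an effective \gsb\ $\SS^c$ is in hand, the Composition--Diamond lemma gives that $\Irr{\SS^c}$, the set of $\SS^c$-irreducible monomials from $\mpb$, is a $\kk$-linear basis of $\PMP\XX\SS$, so every element reduces to a unique, computable normal form. Hence $\ff=\gg$ in $\PMP\XX\SS$ if and only if the normal forms of $\ff$ and $\gg$ coincide; since these normal forms are computable, the word problem is solvable.
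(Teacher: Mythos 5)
Your outline follows the same route as the paper --- construct the linear basis $\mpb$, prove a Composition--Diamond lemma, run Shirshov completion on the finite set $\SS$, and invoke Hilbert's Basis Theorem to force finiteness --- but the decisive step is asserted rather than proved. Your claim is that, since the abelian ideal is a module over a Noetherian polynomial ring, ``only finitely many genuinely new leading monomials can arise, which forces the completion of $\SS$ to stabilize into an effective \gsb.'' Noetherianity of an ambient module does not by itself control which leading monomials completion produces; what is needed is a precise compatibility between the order on $\mpb$ and the operations, namely that bracketing a relation whose leading monomial is long against letters acts on that leading monomial exactly like multiplying a commutative ``tail'' onto it. This is exactly where the difficulty sits, because the order is \emph{not} compatible with the operations in general: multiplying a length-three bracket by a letter splits into the seven cases \eqref{a3=a}--\eqref{a3=a1>a2>a} of Definition~\ref{defi-prod}, and in characteristic $\neq 2$ the product $\lnormed{\aa_1}{\aa_\nn}\cdot\aa$ vanishes outright for $\nn\geq 4$. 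The paper engineers around this: \nsps\ are defined only through bracketing by letters over relations $\ss$ with $\ell(\ov{\ss})\geq 5$ (Definition~\ref{defi-nsp}), everything shorter being handled by multiplication compositions; the compatibility statement is Lemmas~\ref{monomial-order} and~\ref{leading-monomial}; a \emph{minimal} \gsb\ is extracted (Lemma~\ref{minimal-gsb}); and then an infinite minimal \gsb\ would yield, for one of the finitely many ``heads'' $\aa$ or $(\aa,\bb)$ with $\aa,\bb\in\XX$, infinitely many tails in the free commutative monoid $[\XX]$ none of which divides another, contradicting Hilbert's Basis Theorem. Your closing remark that controlling the bracket/product interaction (and the dependence on $\chart(\kk)$) ``is the delicate point'' names precisely this missing argument; flagging it does not supply it, and without it the stabilization claim is unsupported.

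A secondary conflation: the \emph{existence} of a finite \gsb\ and the \emph{effectiveness} of completion are separate claims, and you pass from one to the other silently. The paper needs the second half of the proof of Theorem~\ref{finite-GSB} for this bridge: taking a finite minimal \gsb\ $\RR=\{\ff_1\wdots\ff_\mm\}$, each $\ov{\ff_\ii}$ is realized as the leading monomial of a normal $\SS'$-polynomial built on some $\ss_\ii$ in the completed set $\SS'$, and all these $\ss_\ii$ already lie in some finite stage $\SS_t$, which is therefore itself a \gsb; since each stage has only finitely many compositions and reduction is effective over a computable field, the procedure halts and the halting condition is algorithmically detectable. Your module-theoretic framing is the right abstraction of what the paper actually does and could likely be made rigorous, but as written the proof of Theorem A is incomplete at its core step.
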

 Note that when we consider the word problem or other algorithmic problems of an algebra,  we always assume that the underlying field~$\kk$ is computable. Intuitively, a field~$\kk$ is \emph{computable} if there is an algorithm that, upon input of~$\xx,\yy$ in~$\kk$ computes~$\xx+\yy,\xx\yy,-\xx,$ and computes~$\xx^{-1}$ if~$\xx$ is nonzero~\cite{gsb-book}.

  Our method of proof consists of using Gr\"obner--Shirshov bases, as introduced by Buchberger~\cite{Buchberger65} and Shirshov~\cite{shirshov1962lie-cd-lemma}.
 The principle of our construction is directly reminiscent of what was done for metabelian Lie algebras~\cite{met-lie-cd-lemma}. However, the extension is not obvious. One of the difficulties is that, the reasoning for metabelian Poisson algebras depends on the characteristic of the underlying field. Another difficulty is that, the number of the bilinear operations for a Poisson algebra is more than that of a Lie algebra. Both of these two difficulties call for lots of discussions, and so, new ideas is in urgent. Therefore, we try to avoid discussions as many as possible, at the cost of possibly calculating more compositions for some concrete examples.

 The paper is organized as follows: In Section~\ref{free-met-poisson}, we construct linear bases for free metabelian Poisson algebras over fields of different characteristics. In Section~\ref{main-result}, we first elaborate a Gr\"{o}bner--Shirshov basis method for metabelian Poisson algebras and then establish our main results about the solvability of the word problem for finitely presented metabelian Poisson algebras.

\section{On free metabelian Poisson algebras}\label{free-met-poisson}
Our aim in this section is to construct linear bases for free metabelian Poisson algebras over fields of different characteristics. Denote by~$\MP\XX$ the free metabelian Poisson algebras (freely) generated by a well-ordered set~$\XX$ over a field~$\kk$, and by~$\chart(\kk)$ the characteristic of the underlying field~$\kk$. (We use~$\kk$ as a subscript because the linear basis of~$\MP\XX$ depends on~$\chart(\kk)$.)
\subsection{A linear generating set for~$\MP\XX$}
The notion of a metabelian Poisson algebra was introduced in~\cite{metabelian-poisson}. Here, we shall first state the definition in a slightly different form, which is equivalent to the original one. Then we shall construct a linear basis for the free metabelian Poisson algebra~$\MP\XX$. Before going there, we first introduce several needed notations below. For all subspaces~$\mV$ and~$\mW$ of a Poisson algebra~$\mathcal{P}$, denote by~$\mV+\mW$ the sum of the vector spaces~$\mV$ and~$\mW$, and we define
$$
(\mV,\mW)=\mathsf{span}_\kk\{(\xx,\yy)\in \mathcal{P}\mid \xx\in \mV, \yy\in \mW\}
$$
and
$$
\mV\cdot\mW=\mathsf{span}_\kk\{\xx\cdot\yy\in \mathcal{P}\mid \xx\in \mV, \yy\in \mW\}.
$$

\begin{lemm}\label{meta-equiv-def}
 A Poisson algebra~$\mathcal{P}$ is metabelian if and only if the equality
\begin{equation}\label{metabelian-poisson-def}
\mathcal{P}^2\cdot \mathcal{P}^2+(\mathcal{P}^2, \mathcal{P}^2)=0
\end{equation}
holds, where~$\mathcal{P}^2$ is defined to be the vector space~$\mathcal{P}\cdot \mathcal{P}+(\mathcal{P},\mathcal{P})$.
\end{lemm}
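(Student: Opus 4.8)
The plan is to prove the equivalence by unpacking both notions of ``metabelian'' into statements about the ideals $\mathcal{P}^2$ and $(\mathcal{P}^2)^2$, and then showing that the extension condition is captured exactly by the single vanishing identity~\eqref{metabelian-poisson-def}. The key observation is that $\mathcal{P}^2 = \mathcal{P}\cdot\mathcal{P} + (\mathcal{P},\mathcal{P})$ is an ideal of $\mathcal{P}$ with respect to both operations: closure under the bracket follows from $(-,-)$ being a Lie bracket together with the Leibniz identity~\eqref{leibniz}, and closure under multiplication follows from associativity and commutativity of $\cdot$ together again with~\eqref{leibniz}. I would verify this first, so that the quotient $\mathcal{P}/\mathcal{P}^2$ makes sense as a Poisson algebra; since $\mathcal{P}/\mathcal{P}^2$ has all products and brackets of generators lying in $\mathcal{P}^2$, it is abelian.

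For the forward direction, suppose $\mathcal{P}$ is metabelian in the original sense, \emph{i.e.} there is an abelian ideal $\mathcal{I}$ with $\mathcal{P}/\mathcal{I}$ abelian. Because $\mathcal{P}/\mathcal{I}$ is abelian, every product and bracket of two elements of $\mathcal{P}$ lands in $\mathcal{I}$, so $\mathcal{P}^2 \subseteq \mathcal{I}$. Then $\mathcal{P}^2\cdot\mathcal{P}^2 \subseteq \mathcal{I}\cdot\mathcal{I}$ and $(\mathcal{P}^2,\mathcal{P}^2)\subseteq(\mathcal{I},\mathcal{I})$, and both of the latter vanish because $\mathcal{I}$ is abelian. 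This gives~\eqref{metabelian-poisson-def}. Conversely, if~\eqref{metabelian-poisson-def} holds, I would take $\mathcal{I}=\mathcal{P}^2$ itself as the witnessing ideal: it is an ideal by the first step, the quotient $\mathcal{P}/\mathcal{P}^2$ is abelian as noted above, and~\eqref{metabelian-poisson-def} says precisely that $\mathcal{P}^2\cdot\mathcal{P}^2 = (\mathcal{P}^2,\mathcal{P}^2)=0$, so $\mathcal{P}^2$ is an abelian subalgebra — hence an abelian ideal. Thus $\mathcal{P}$ is an extension of the abelian algebra $\mathcal{P}^2$ by the abelian algebra $\mathcal{P}/\mathcal{P}^2$.

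The main obstacle I anticipate is the bookkeeping in verifying that $\mathcal{P}^2$ is genuinely a two-sided ideal under \emph{both} operations simultaneously, since the Leibniz identity~\eqref{leibniz} intertwines them: to show, say, that $\mathcal{P}\cdot\mathcal{P}^2\subseteq\mathcal{P}^2$ one must push a bracket through a product and track where the resulting terms live. One should also be careful that ``abelian'' in the definition means both $\xx\cdot\yy=0$ \emph{and} $(\xx,\yy)=0$, so the identity~\eqref{metabelian-poisson-def} must encode the vanishing of both operations on $\mathcal{P}^2$ — which it does, since it is a sum of the two nonnegative (in the sense of being spanned by products, resp.\ brackets) subspaces $\mathcal{P}^2\cdot\mathcal{P}^2$ and $(\mathcal{P}^2,\mathcal{P}^2)$, and a sum of subspaces is zero iff each summand is zero. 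Once these structural facts are in place, the equivalence falls out immediately, with no dependence on $\chart(\kk)$ at this stage.
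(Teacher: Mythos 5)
Your proof is correct and takes essentially the same route as the paper: the forward direction deduces $\mathcal{P}^2\subseteq\mathcal{I}$ from abelianness of the quotient and then annihilates both operations using abelianness of $\mathcal{I}$, while the converse exhibits the exact sequence $0\to\mathcal{P}^2\to\mathcal{P}\to\mathcal{P}/\mathcal{P}^2\to 0$. The only difference is that you explicitly verify that $\mathcal{P}^2$ is an ideal under both operations (via the Leibniz identity), a point the paper leaves implicit by calling that exact sequence ``obvious.''
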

\begin{proof}
  Assume that~$0 \longrightarrow \mA \longrightarrow \mathcal{P} \longrightarrow \mB \longrightarrow 0$
  is an exact sequence of Poisson algebras, where~$\mA$ and~$\mB$ are abelian. Since~$\mB$ is an abelian Poisson algebra, we deduce~$\mathcal{P}^2+\mA=\mA$ and thus we have~$\mathcal{P}^2\subseteq\mA$. Since~$\mA$ is also abelian, we obtain
  $$
  \mathcal{P}^2\cdot \mathcal{P}^2+(\mathcal{P}^2, \mathcal{P}^2)=0.
  $$
  Conversely, assume that~\eqref{metabelian-poisson-def} holds. Then both~$\mathcal{P}^2$ and~$\mathcal{P}{/}\mathcal{P}^2$ are abelian. Moreover, there is an obvious exact sequence~$0 \longrightarrow \mathcal{P}^2 \longrightarrow \mathcal{P} \longrightarrow \mathcal{P}{/}\mathcal{P}^2\longrightarrow 0$.
\end{proof}
 As a result, for all~$\xx, \yy,\zz,\zz'$ in a metabelian Poisson algebra~$\mathcal{P}$, we have
\begin{equation}\label{length-4}
((\xx,\yy),(\zz,\zz'))=((\xx,\yy),\zz\cdot \zz')= (\xx,\yy)\cdot(\zz,\zz')=\xx \cdot \yy\cdot (\zz,\zz')= \xx\cdot\yy\cdot \zz\cdot \zz'=0.
\end{equation}
In particular, since~$((\xx,\yy),(\zz,\zz'))=(((\xx,\yy),\zz ),\zz')-(((\xx,\yy),\zz' ),\zz)$, by~$\eqref{length-4}$ we deduce
\begin{equation}\label{lie-commu}
  (((\xx,\yy),\zz ),\zz')= (((\xx,\yy),\zz' ),\zz).
\end{equation}
Similarly, we also have
\begin{equation}\label{cdot-anti-commu}
  ((\xx,\yy),\zz )\cdot \zz'= -((\xx,\yy),\zz' )\cdot \zz
\end{equation}
For all elements~$\xx_1\wdots\xx_\nn $ of a metabelian Poisson algebra, to make the notation shorter, we define
\begin{equation*}
\lnormed{\xx_1}{\xx_\nn}=(\quldots ((\xx_1, \xx_2) ,\xx_3) ,\quldots , \xx_\nn) \ \ \mbox{(left-normed bracketing).}
\end{equation*}
In particular, we have~$\Lnormed{\xx_1,\xx_2}=(\xx_1,\xx_2)$ and~$\Lnormed{\xx_1}=\xx_1$.

In the sequel, we will frequently reorder a sequence. For all~$\aa_1\wdots\aa_n$ in~$\XX$, for every~$\ff$ in~$\MP\XX$, assume that~$\dd_1\wdots\dd_n$ is a reordering of~$\aa_1\wdots\aa_n$ satisfying~$\dd_1\leq \pdots\leq \dd_n$. Then we define
$$
\ova{\aa_1\wdots\aa_n}=\dd_1\wdots\dd_n, \ \ova{\aa_1\cdot\pdots\cdot\aa_n}=\dd_1\cdot \pdots \cdot\dd_n,\ \
\Lnormed{\ff,\ova{\aa_{1}\wdots\aa_n}}
=\Lnormed{\ff,\dd_{1}\wdots\dd_{n}}
$$
and
$$
 \Lnormeda{\ff}{\aa_{1}\wdots\aa_{n-1}}{\aa_n}
=\Lnormed{\ff,\dd_{1}\wdots\dd_{n-1}}\cdot \dd_n.
$$
For simplicity, we sometimes omit the multiplication~$\cdot$, for instance, we can denote~$\aa_1\cdot\pdots\cdot\aa_n$ by~$\aa_1...\aa_n$.

 We are now ready to construct a linear generating set~$\mpb$ for~$\MP\XX$, and we shall prove that~$\mpb$ is in fact a linear basis for~$\MP\XX$ in Theorem~\ref{iso}.
\begin{lemm}\label{generating-set}
    Let~$\XX$ be a well-ordered set, and define a series of sets~$\YYY_1 \wdots \YYY_5$ as follows:

    $\YYY_1=\{\lnormed{\aa_1}{\aa_n}\mid \nn\geq 1, \aa_1\wdots\aa_n \in \XX, \aa_2\leq \pdots\leq \aa_n,   \aa_1
    >\aa_2~\mbox{if}~\nn\geq 2\}$,

    $\YYY_2=\{\aa_1...\aa_t\mid \aa_1\wdots\aa_t\in \XX, \aa_1\leq \pdots\leq \aa_t,2\leq \tt\leq 3 \}$,

    $\YYY_3=\{ [\aa_1,\aa_2]_{_\mathsf{L}}\cdot \aa_3\mid \aa_1,\aa_2,\aa_3\in \XX,\aa_1 >\aa_2 \}$,

    $\YYY_4=\{ [\aa_1,\aa_2,\aa_3]_{_\mathsf{L}}\cdot \aa_4\mid \aa_1\wdots\aa_4\in \XX, \aa_2<\aa_1\leq \aa_4,  \aa_2\leq \aa_3<\aa_4\}$,

     $\YYY_5=\{\lnormed{\aa_1}{\aa_n}\cdot \aa_{n+1}\mid \aa_1\wdots\aa_{n+1} \in \XX, n\geq 3, \aa_1>\aa_2, \aa_2\leq \pdots\leq \aa_{n+1}\}$.\\
    Define~$$\mpb:=
    \begin{cases}\YYY_1\cup\YYY_2\cup\YYY_3\cup \YYY_4,\ \  \mbox{ if }~\chart(\kk)\neq 2,\\
    \YYY_1\cup\YYY_2\cup\YYY_3\cup \YYY_5,\ \  \mbox{ if }~\chart(\kk)= 2.
    \end{cases}$$
     Then~$\mpb$ is a linear generating set for~$\MP\XX$.
\end{lemm}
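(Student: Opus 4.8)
The plan is to show that every element of $\MP\XX$ lies in the linear span of $\mpb$, by first reducing an arbitrary Poisson monomial to a commutative product of left-normed Lie brackets, and then using the metabelian relations to control how many such brackets, and of what length, may occur. Since the lemma only asserts a \emph{generating} set, I do not need linear independence here (that is deferred to Theorem~\ref{iso}), which keeps the argument to spanning alone.

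First I would recall that $\MP\XX$ is spanned by all Poisson monomials, i.e.\ expressions built from $\XX$ by the two operations $\cdot$ and $(-,-)$, and argue by induction on the number of operations. The multiplication case is immediate, so the work is the bracket $(\ff,\gg)$ of two already-reduced elements $\ff=L_1\cdots L_p$ and $\gg=M_1\cdots M_q$, where each $L_i,M_j$ is a left-normed bracket (a single generator being the length-$1$ case). Using that by \eqref{leibniz} the bracket is a derivation in each argument, I would expand $(\ff,\gg)$ as a sum of commutative products in which exactly one pair survives as a single Lie bracket $(L_i,M_j)$ and all remaining factors are multiplied commutatively. Because $(L_i,M_j)$ is again a Lie element, the usual metabelian Lie reductions (anti-commutativity and Jacobi) rewrite it as a combination of left-normed brackets. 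Hence $\MP\XX$ is spanned by commutative products $L_1\cdots L_m$ of left-normed brackets $\lnormed{\aa_1}{\aa_r}$.

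Next I would invoke Lemma~\ref{meta-equiv-def}: since $\mathcal{P}^2\cdot\mathcal{P}^2+(\mathcal{P}^2,\mathcal{P}^2)=0$, and every bracket of length $\ge 2$ as well as every product of two or more generators lies in $\mathcal{P}^2$, any product $L_1\cdots L_m$ having two factors in $\mathcal{P}^2$ vanishes. This forces three reductions. A purely commutative monomial $\aa_1\cdots\aa_r$ is zero for $r\ge 4$ (write it as $(\aa_1\aa_2)\cdot(\aa_3\cdots\aa_r)$, both factors in $\mathcal{P}^2$), and after sorting it otherwise lies in $\YYY_1$ (for $r=1$) or $\YYY_2$ (for $r=2,3$). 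A product containing a genuine bracket $L$ of length $\ge 2$ may carry at most one extra generator, since $L\cdot\aa_i\aa_j\in\mathcal{P}^2\cdot\mathcal{P}^2=0$. Thus every surviving monomial is either a left-normed bracket, reducible to $\YYY_1$ by the standard metabelian Lie normal form, or a bracket-times-one-generator $\lnormed{\aa_1}{\aa_n}\cdot\aa_{n+1}$, which is the only remaining case.

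For that mixed term I would record two symmetries. By repeated use of \eqref{lie-commu} the bracket slots $\aa_3,\ldots,\aa_n$ may be permuted freely; and from \eqref{leibniz} together with $(\mathcal{P}^2,\mathcal{P}^2)=0$ — exactly the derivation behind \eqref{cdot-anti-commu} — one obtains for $n\ge 3$ the sign rule $\lnormed{\aa_1}{\aa_n}\cdot\aa_{n+1}=-\lnormed{\aa_1,\ldots,\aa_{n-1},\aa_{n+1}}\cdot\aa_n$. When $\chart(\kk)\ne 2$ and $n\ge 4$ there are two bracket slots in positions $\ge 3$, so composing the symmetric transposition $\sigma=(\aa_{n-1}\ \aa_n)$ (sign $+1$) with the sign transposition $\tau=(\aa_n\ \aa_{n+1})$ (sign $-1$) gives a relation realizing $(\sigma\tau)^3=\mathrm{id}$ with total accumulated sign $-1$; hence the monomial equals its own negative and vanishes, leaving only $n=3$, which the sign rule and the Lie normal form put into the inequalities defining $\YYY_4$. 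When $\chart(\kk)=2$ the signs collapse, so $\aa_3,\ldots,\aa_n,\aa_{n+1}$ become fully symmetric, and combining this with the metabelian Lie reduction that makes $\aa_2$ the minimum of $\aa_2,\ldots,\aa_{n+1}$ lets me sort into the form defining $\YYY_5$. I expect the main obstacle to be precisely this characteristic-$2$ bookkeeping: verifying that $\aa_2$ can always be driven to the global minimum of $\{\aa_2,\ldots,\aa_{n+1}\}$ while every intermediate term stays inside the span, and checking that the $(\sigma\tau)^3$ collapse in characteristic $\ne 2$ is applied consistently rather than circularly — this is where the many characteristic-sensitive discussions alluded to in the introduction are genuinely unavoidable.
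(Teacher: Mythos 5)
Your route is essentially the paper's: reduce via the Leibniz identity to commutative products of left-normed Lie brackets, use the metabelian relations of Lemma~\ref{meta-equiv-def} to kill any product with two factors in~$\mathcal{P}^2$, and then normalize the surviving mixed monomials~$\lnormed{\aa_1}{\aa_n}\cdot\aa_{n+1}$ using the two symmetries~\eqref{lie-commu} and~\eqref{cdot-anti-commu}. Your~$(\sigma\tau)^3$ sign argument for the vanishing when~$\chart(\kk)\neq 2$ and~$n\geq 4$ is exactly the paper's computation repackaged in permutation language, and your characteristic-$2$ outline matches the paper's reduction to~$\YYY_5$.

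The genuine gap is the case you dispose of in half a sentence: $\chart(\kk)\neq 2$, $n=3$, i.e.~$((\aa_1,\aa_2),\aa_3)\cdot\aa_4$ with~$\aa_2\leq\aa_3\leq\aa_4$. When~$\aa_1\leq\aa_4$ (or~$\aa_3=\aa_4$, which your sign rule kills) your tools suffice, but when~$\aa_4<\aa_1$ they do not: membership in~$\YYY_4$ forces the multiplied letter to dominate ($\aa_1\leq\aa_4$), whereas your sign rule only exchanges the last bracket slot with the multiplied letter, and the brackets~$((\aa_1,\aa_2),\aa_3)$ and~$((\aa_1,\aa_2),\aa_4)$ are both already in metabelian Lie normal form, so ``normalizing'' is a no-op. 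Starting from~$\aa_2\leq\aa_3\leq\aa_4<\aa_1$, your two moves merely toggle between~$((\aa_1,\aa_2),\aa_3)\cdot\aa_4$ and~$-((\aa_1,\aa_2),\aa_4)\cdot\aa_3$, neither of which lies in~$\YYY_4$. What is actually needed --- and what the paper does --- is to pull the maximal letter~$\aa_1$ \emph{out of the bracket}: expand the normal-form bracket by Jacobi into non-normal-form terms, apply~\eqref{cdot-anti-commu} termwise, and recombine, arriving at~$2((\aa_1,\aa_2),\aa_3)\cdot\aa_4=2((\aa_4,\aa_2),\aa_3)\cdot\aa_1-2((\aa_3,\aa_2),\aa_4)\cdot\aa_1$, followed by division by~$2$. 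This is a second, independent use of~$\chart(\kk)\neq 2$, and it is unavoidable: in characteristic~$2$ the set~$\YYY_4$ genuinely fails to span (which is precisely why~$\YYY_5$ replaces it there), so no characteristic-free manipulation can close this case. Relatedly, your guess that the characteristic-$2$ bookkeeping is the main obstacle is misplaced --- that is the short part of the paper's proof; the real work sits exactly in the~$n=3$, $\aa_4<\aa_1$ computation you skipped.
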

\begin{proof}
Note that a metabelian Poisson algebra is also a metabelian Lie algebra, where a metabelian Lie algebra is by definition an extension of an abelian Lie algebra by another abelian Lie algebra. It is known that~$\YYY_1$ is a linear basis of the free metabelian Lie algebra generated by a well-ordered set~$\XX$~\cite{bokut-basis-for-metabelian-lie}.
In particular, every monomial in~$\MP\XX$ that involves only the Poisson brackets can be written as a linear combination of elements in~$\YYY_1$. On the other hand, monomials that do not involve Poisson brackets can be written as linear combinations of elements in~$\YYY_2\cup\XX$. What remains is the monomials involving both the Poisson brackets and the multiplications.

By the Leibniz identity~\eqref{leibniz}, every element of~$\MP\XX$ can be written as a linear combination of elements of the form~$\xx_1... \xx_n$, where~$\xx_1\wdots\xx_n$ lies in the metabelian Lie-subalgebra of~$\MP\XX$ generated by~$\XX$. In particular, we may assume that each~$\xx_i$ lies in~$\YYY_1$.
By identity~$\eqref{length-4}$, we have~$\xx_1... \xx_n=0$ if~$n\geq 4$, or, if~$\nn=3$ and at least one of~$\xx_1\wdots\xx_3$ does not lie in~$\XX$, or, if~$\nn=2$ and neither~$\xx_1$ nor~$\xx_2$ lies in~$\XX$. Therefore, if~$\xx_1...\xx_n$ does not belong to~$\YYY_1\cup\YYY_2\cup\YYY_3$, then we may assume
$$
\xx_1...\xx_n
=\xx_1\cdot \xx_2=\lnormed{\aa_1}{\aa_m}\cdot \aa_{m+1}
$$
 for some letters~$\aa_1\wdots\aa_{m+1} \in \XX$, $m\geq 3$ satisfying~$\aa_1>\aa_2$ and~$\aa_2\leq \pdots\leq \aa_{m}$.

We first show that~$\lnormed{\aa_1}{\aa_m}\cdot \aa_{m+1}$ can be written as a linear combination of elements in~$\YYY_5$. If~$\aa_2\leq \aa_{m+1}$, then by~\eqref{lie-commu} and~\eqref{cdot-anti-commu}, we deduce
$$
 \lnormed{\aa_1}{\aa_m}\cdot\aa_{m+1}
=\alpha\Lnormeda{\aa_1,\aa_2}{\aa_3\wdots\aa_m}{\aa_{m+1}}
=\alpha\Lnormeda{\aa_1}{\aa_2,\aa_3\wdots\aa_m}{\aa_{m+1}}
$$
for some integer~$\alpha$ equals to~$1$ or~$-1$. If~$\aa_{m+1}<\aa_2$, then by~\eqref{lie-commu} and~\eqref{cdot-anti-commu} again, we obtain
\begin{multline*}
\Lnormed{\aa_1\wdots\aa_{m}}\cdot\aa_{m+1}
=-\Lnormed{\aa_1, \aa_2, \aa_{m+1}, \aa_3\wdots\aa_{m-1}}\cdot\aa_{m}\\
=-\Lnormed{\aa_1,\aa_{m+1},\aa_2\wdots\aa_{m-1}}\cdot \aa_m
+\alpha\Lnormeda{\aa_2,\aa_{m+1}}{\aa_3\wdots\aa_m}{\aa_1}
\end{multline*}
for some integer~$\alpha$ equals to~$1$ or~$-1$, where for~$\mm=3$, the sequence~$ \aa_3\wdots\aa_{m-1}$ is empty,  the sequence~$\aa_2\wdots\aa_{m-1}$ means~$\aa_2$ and the sequence~$\aa_3\wdots\aa_{\mm}$ means~$\aa_3$. In particular, we conclude that, if~$\chart(\kk)=2$, then~$\mpb$ is a linear generating set for~$\MP\XX$.

Now we assume~$\chart(\kk)\neq 2$ and~$\mm=3$. We shall show that every~$((\aa_1,\aa_2),\aa_3)\cdot \aa_4$ in~$\YYY_5$ can be written as a linear combination of elements in~$\YYY_4$.  If~$\aa_3=\aa_4$, then by~\eqref{cdot-anti-commu} and by the fact that~$\chart(\kk)\neq 2$, we deduce~$((\aa_1,\aa_2),\aa_3)\cdot \aa_4=0$.  Therefore, we may assume~$\aa_3<\aa_4$.  If we also have~$\aa_1\leq \aa_4$, then~$((\aa_1,\aa_2),\aa_3)\cdot \aa_4$ lies in~$\YYY_4$. On the other hand, if~$\aa_4<\aa_1$, then we obtain
\begin{align*}
&2((\aa_1,\aa_2),\aa_3)\cdot \aa_4
  =((\aa_1,\aa_2),\aa_3)\cdot \aa_4-((\aa_1,\aa_2),\aa_4)\cdot \aa_3&\\
  =&((\aa_1,\aa_3),\aa_2)\cdot \aa_4 -((\aa_2,\aa_3),\aa_1)\cdot \aa_4
  -((\aa_1,\aa_4),\aa_2)\cdot \aa_3 +((\aa_2,\aa_4),\aa_1)\cdot \aa_3&\\
  =&-((\aa_1,\aa_3),\aa_4)\cdot \aa_2 +((\aa_2,\aa_3),\aa_4)\cdot \aa_1
  +((\aa_1,\aa_4),\aa_3)\cdot \aa_2 -((\aa_2,\aa_4),\aa_3)\cdot \aa_1&\\
  =&-((\aa_1,\aa_3),\aa_4)\cdot \aa_2 +((\aa_1,\aa_4),\aa_3)\cdot \aa_2-((\aa_3,\aa_2),\aa_4)\cdot \aa_1 +((\aa_4,\aa_2),\aa_3)\cdot \aa_1&\\
   =&-((\aa_4,\aa_3),\aa_1)\cdot \aa_2 +((\aa_4,\aa_3),\aa_2)\cdot \aa_1=2((\aa_4,\aa_3),\aa_2)\cdot \aa_1&\\
   =&2((\aa_4,\aa_2),\aa_3)\cdot \aa_1-2((\aa_3,\aa_2),\aa_4)\cdot \aa_1.&
\end{align*}
Since~$\chart(\kk)\neq 2$, we deduce
$$
((\aa_1,\aa_2),\aa_3)\cdot \aa_4=((\aa_4,\aa_2),\aa_3)\cdot \aa_1-((\aa_3,\aa_2),\aa_4)\cdot \aa_1,$$
where~$((\aa_3,\aa_2),\aa_4)\cdot \aa_1$ lies in~$\YYY_4$ if~$\aa_3>\aa_2$ and~$((\aa_3,\aa_2),\aa_4)\cdot \aa_1$ is~$0$ if~$\aa_3=\aa_2$.
Therefore,  we finally obtain a linear combination of elements in~$\YYY_4$ for~$((\aa_1,\aa_2),\aa_3)\cdot \aa_4$.

Finally, assume~$\chart(\kk)\neq 2$ and~$\mm\geq 4$, then by~$\eqref{lie-commu}$ and~$\eqref{cdot-anti-commu}$, we have
\begin{align*}
& \lnormed{\aa_1}{\aa_m}\cdot \aa_{m+1}
 =-((\lnormed{\aa_1}{\aa_{m-2}},\aa_{m-1}),\aa_{m+1})\cdot \aa_{m}&\\
 =&-((\lnormed{\aa_1}{\aa_{m-2}},\aa_{m+1}),\aa_{m-1})\cdot \aa_{m}
 =((\lnormed{\aa_1}{\aa_{m-2}},\aa_{m+1}),\aa_{m})\cdot \aa_{m-1}&\\
 =&-((\lnormed{\aa_1}{\aa_{m-2}},\aa_{m}),\aa_{m-1})\cdot \aa_{m+1}
 =-\lnormed{\aa_1}{\aa_m}\cdot \aa_{m+1}.&
\end{align*}
Since~$\chart(\kk)\neq 2$, we deduce~$\lnormed{\aa_1}{\aa_m}\cdot \aa_{m+1}=0$ if~$\mm\geq 4$.
\end{proof}

\subsection{The linear independence of~$\mpb$}
With the notations of Lemma~\ref{generating-set}, to show that~$\mpb$ is a linear basis for~$\MP\XX$, the strategy is to construct another metabelian Poisson algebra that is isomorphic to~$\MP\XX$. (We can also use the method of Gr\"{o}bner--Shirshov bases for Poisson algebras~\cite{poisson-cd-lemma} to prove the linear independence of~$\mpb$, but in order to make the paper self-contained, we choose the first way.) Let~$\kk\mpb$ be the linear space over~$\kk$ with a linear basis~$\mpb$. We shall define bilinear operations~$\circ$ and~$\{-,-\}$ on~$\kk\mpb$ such that~$(\kk\mpb,\circ, \{-,-\})$ becomes a metabelian Poisson algebra generated by~$\XX$.

For every~$\WW$ in~$\mpb$, define the \emph{length}~$\ell(\WW)$ of~$\WW$ to be the number of letters (with repetitions) that appear in~$\WW$. For instance, for all~$\aa_1\wdots\aa_n$ in~$\XX$, we have~${\ell(\lnormed{\aa_1}{\aa_n})=n}$. In the following definition, for every~$\WW$ in~$\mpb$, the notation~$0\cdot \WW$ means~$0$.
\begin{defi}\label{defi-prod}
We define bilinear operations~$\circ$ and~$\{-,-\}$ on~$\kk\mpb$ as follows: For every~$\aa$ in~$\XX$, for all~$\WW_1, \WW_2$ in~$\mpb$,

\ITEM1 If~$\ell(\WW_1)\geq 2$ and~$\ell(\WW_2)\geq 2$, then define
$$\WW_1\circ \WW_2=\{\WW_1,\WW_2\}=0.$$

\ITEM2 For~$\bb$ in~$\XX$,  we define
$$
\bb\circ \aa=\ova{\bb\cdot \aa},$$ and define
$$
\{\bb, \aa\}=
\begin{cases}
[\bb,\aa]_{_\mathsf{L}}, &   \mbox{if } \bb>\aa, \\
0,   & \mbox{if } \bb=\aa,  \\
-[\aa,\bb]_{_\mathsf{L}}, &   \mbox{if } \bb<\aa.
\end{cases}
$$

\ITEM3 If~$\WW_1=\aa_1\cdot \aa_2$ lies in~$\YYY_2$, then define
$$(\aa_1\cdot \aa_2)\circ \aa= \ova{\aa_1\cdot \aa_2\cdot\aa},$$ and define
$$\{\aa_1\cdot \aa_2 , \aa\}=\{\aa_1,\aa\} \cdot \aa_2+ \{\aa_2,\aa\} \cdot \aa_1,$$
where for all letters~$\aa$ and~$\bb$ in~$\XX$,  the notation~$\{\aa,\bb\}$ is already defined in~\ITEM2, and~$0\cdot \aa$ means~0.

\ITEM4 If~$\WW_1=[\aa_1,\aa_2]_{_\mathsf{L}}$ lies in~$\YYY_1$, then~we define
$$[\aa_1,\aa_2]_{_\mathsf{L}}\circ \aa=[\aa_1,\aa_2]_{_\mathsf{L}}\cdot \aa,$$ and
$$
\{[\aa_1,\aa_2]_{_\mathsf{L}}, \aa\}=
\begin{cases}
\Lnormed{\aa_1,\aa_2,\aa}, &   \mbox{if } \aa_2\leq \aa, \\
\Lnormed{\aa_1,\aa,\aa_2}-\Lnormed{\aa_2,\aa,\aa_1},   & \mbox{if } \aa_2> \aa.
\end{cases}
$$

\ITEM5 If~$\WW_1=\aa_1\cdot \aa_2\cdot \aa_3$ lies in~$\YYY_2$, then we define
$$
(\aa_1\cdot \aa_2\cdot \aa_3)\circ \aa=\{\aa_1\cdot \aa_2\cdot \aa_3,\aa\}=0.
$$

\ITEM6 If~$\WW_1=\Lnormed{\aa_1,\aa_2,\aa_3}$ lies in~$\YYY_1$, then we have~$\aa_1>\aa_2$ and~$\aa_2\leq \aa_3$. We define
$$
\{\Lnormed{\aa_1,\aa_2,\aa_3}, \aa\}=
\begin{cases}
\Lnormed{\aa_1,\aa_2,\ova{\aa_3, \aa}}, &   \mbox{if } \aa\geq \aa_2, \\
\Lnormed{\aa_1,\aa,\aa_2,\aa_3}- \Lnormed{\aa_2,\aa,\ova{\aa_1, \aa_3}}, &   \mbox{if } \aa< \aa_2.
\end{cases}
$$
The definition of~$\WW_1\circ \aa$ depends on~$\chart(\kk)$. For the case that~$\chart(\kk)\neq 2$, we define~$\WW_1\circ \aa$ as follows (the idea is to ``move" the maximal letter involved outside the Poisson brackets):
\begin{numcases}{\Lnormed{\aa_1,\aa_2,\aa_3}\circ \aa=}
0,    \ \ \ \ \  \ \ \ \ \ \ \ \ \ \ \ \ \ \ \ \ \  \mbox{if } \aa_3=\aa, \label{a3=a} \\
\Lnormed{\aa_1,\aa_2,\aa_3} \cdot \aa,    \ \  \ \ \ \mbox{if }   \aa>\aa_3, \ \aa \geq  \aa_1,  \label{amax}\\
\{\{\aa,\aa_3\},\aa_2\}\cdot \aa_1,   \ \     \mbox{if } \aa>\aa_3, \ \aa<\aa_1, \label{a1>a>a3}\\
\{\{\aa,\aa_3\},\aa_2\}\cdot \aa_1,   \ \    \mbox{if }\aa_3 > \aa, \  \aa_3<\aa_1,\label{a1>a3>a}\\
-\{ \{\aa_1,\aa_2\},\aa\} \cdot \aa_3,     \mbox{if } \aa_3 > \aa, \  \aa_3>\aa_1,\label{a3max}\\
-\Lnormed{\aa_3,\aa_2,\aa} \cdot \aa_1,
  \ \ \ \mbox{if }\aa_3=\aa_1>\aa\geq \aa_2,\label{a3=a1>aa2}\\
-\Lnormed{\aa_3,\aa,\aa_2} \cdot \aa_1,
  \ \ \ \mbox{if }\aa_3=\aa_1>\aa_2>\aa,\label{a3=a1>a2>a}
\end{numcases}
where by convention,  polynomial~$(\sum_{i}\alpha_i\Lnormed{\bb_{i,1},\bb_{i,2},\bb_{i,3}})\cdot\bb_4$ means~$\sum_{i}\alpha_i(\Lnormed{\bb_{i,1},\bb_{i,2},\bb_{i,3}}\cdot\bb_4)$ if each~$\bb_{i,j}$ lies in~$\XX$. For the case when~$\chart(\kk)=2$, we define
$$
\Lnormed{\aa_1,\aa_2,\aa_3}\circ \aa=
\begin{cases}
\Lnormeda{\aa_1,\aa_2}{\aa_3} { \aa}, &   \mbox{if } \aa\geq \aa_2, \\
\Lnormed{\aa_1,\aa,\aa_2} \cdot \aa_3- \Lnormeda{\aa_2,\aa}{\aa_1}{\aa_3}, &   \mbox{if } \aa< \aa_2.
\end{cases}
$$

\ITEM7 If~$\WW_1=[\aa_1,\aa_2]_{_\mathsf{L}}\cdot \aa_3$ lies in~$\YYY_3$, then we define
$$
([\aa_1,\aa_2]_{_\mathsf{L}}\cdot \aa_3)\circ \aa=0.
$$
The product~$\{\WW_1,\aa\}$ also depends on~$\chart(\kk)$. Thanks to~\ITEM6, we define
$$\{[\aa_1,\aa_2]_{_\mathsf{L}}\cdot \aa_3,\aa\}=\{[\aa_1,\aa_2]_{_\mathsf{L}},\aa\}\circ \aa_3.$$

\ITEM8 If~$\WW_1=\lnormed{\aa_1}{\aa_n}$ lies in~$\YYY_1$ satisfying~$\nn\geq 4$, then we define
$$
\{\lnormed{\aa_1}{\aa_n}, \aa\}=
\begin{cases}
\Lnormed{\aa_1,\aa_2,\ova{\aa_3\wdots\aa_{n},\aa}}, &   \mbox{if } \aa\geq \aa_2, \\
\lnormed{\aa_1,\aa,\aa_2}{\aa_n}- \Lnormed{\aa_2,\aa,\ova{\aa_1,\aa_3\wdots\aa_n}}, &   \mbox{if } \aa< \aa_2.
\end{cases}
$$
 Again, the product~$\lnormed{\aa_1}{\aa_n}\circ \aa$ depends on~$\chart(\kk)$. If~$\chart(\kk)\neq 2$, then we define
 $$
 \lnormed{\aa_1}{\aa_n}\circ \aa=0.
 $$
 For the case that~$\chart(\kk)=2$, we define
 $$
  \lnormed{\aa_1}{\aa_n}\circ \aa=
\begin{cases}
 \Lnormeda{\aa_1,\aa_2}{\aa,\aa_3\wdots \aa_{n-1}}{\aa_n}, &   \mbox{if } \aa\geq \aa_2, \\
 \lnormed{\aa_1,\aa,\aa_2}{\aa_{n-1}}\cdot \aa_n- \Lnormeda{\aa_2,\aa}{\aa_1,\aa_3\wdots\aa_{n-1}}{\aa_n}, &   \mbox{if } \aa< \aa_2.
\end{cases}
$$

\ITEM9 If~$\WW_1=\lnormed{\aa_1}{\aa_{n-1}}\cdot \aa_n$ with~$\nn\geq 4$, then we define
$$
(\lnormed{\aa_1}{\aa_{n-1}}\cdot \aa_n)\circ \aa=0.
$$
Moreover, if~$\chart(\kk)=2$, then we define
$$
\{\lnormed{\aa_1}{\aa_{n-1}}\cdot \aa_n,\aa\}=
\begin{cases}
 \Lnormeda{\aa_1,\aa_2}{\aa_3\wdots \aa_n}{\aa}, &   \mbox{if } \aa\geq \aa_2, \\
 \lnormed{\aa_1,\aa,\aa_2}{\aa_{n-1}}\cdot \aa_n- \Lnormeda{\aa_2,\aa}{\aa_3\wdots\aa_n}{\aa_1}, &   \mbox{if } \aa< \aa_2.
\end{cases}
$$
if~$\chart(\kk)\neq 2$, then we have~$\nn=4$, and we define
$$
\{\lnormed{\aa_1}{\aa_{3}}\cdot \aa_4,\aa\}=0.
$$

\ITEM{10} Finally, for every~$\WW$ in~$\mpb$ such that~$\ell(\WW)\geq 2$, for every~$\aa$ in~$\XX$, define
$$
\aa\circ \WW=\WW\circ \aa
$$ and define
$$
\{\WW,\aa\}=-\{\aa,\WW\}.
$$
\end{defi}

We shall soon show that~$(\kk\mpb, \circ, \{-,-\})$ is a metabelian Poisson algebra.
We first show that~$(\kk\mpb, \circ)$ is an associative commutative algebra.

\begin{lemm}\label{become-comm}
  With respect to Definition~\ref{defi-prod}, the space~$(\kk\mpb, \circ)$ becomes an associative commutative algebra.
\end{lemm}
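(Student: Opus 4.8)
The plan is to verify the two axioms separately, reducing everything to the way a single letter acts under~$\circ$. Commutativity is essentially built into Definition~\ref{defi-prod}: if~$\ell(\WW_1)\geq2$ and~$\ell(\WW_2)\geq2$, then~$\WW_1\circ\WW_2=\WW_2\circ\WW_1=0$ by~\ITEM1; if exactly one factor, say~$\WW_2=\aa$, is a letter and~$\ell(\WW_1)\geq2$, then~$\aa\circ\WW_1=\WW_1\circ\aa$ by~\ITEM{10}; and if both factors are letters, the value~$\ova{\bb\cdot\aa}$ in~\ITEM2 (respectively~$\ova{\aa_1\aa_2\aa}$ in~\ITEM3) is symmetric because the reordering~$\ova{\cdot}$ does not depend on the input order. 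Hence~$(\kk\mpb,\circ)$ is commutative, and I shall use this freely below.

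For associativity I would first observe that, because~$\circ$ annihilates every product of two elements of length~$\geq2$ (part~\ITEM1), most triples are associative for trivial reasons. Concretely, fixing basis elements~$\WW_1,\WW_2,\WW_3\in\mpb$ and splitting into cases according to how many of them are letters, a short computation using~\ITEM1 and commutativity shows that~$(\WW_1\circ\WW_2)\circ\WW_3=\WW_1\circ(\WW_2\circ\WW_3)=0$ whenever at least two of the three factors have length~$\geq2$. The only surviving configurations are those in which at least two factors are letters, and by commutativity all of these reduce to the single identity
\begin{equation*}
(\WW\circ\aa)\circ\bb=\WW\circ\ova{\aa\bb}\qquad(\WW\in\mpb,\ \aa,\bb\in\XX).
\end{equation*}
Indeed, the right-hand side~$\WW\circ\ova{\aa\bb}$ is symmetric in~$\aa$ and~$\bb$, so the displayed identity also yields~$(\WW\circ\aa)\circ\bb=(\WW\circ\bb)\circ\aa$, which covers the remaining orderings of the two letters.

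To prove this reduced identity I would separate two cases. When~$\ell(\WW)=1$, that is~$\WW=\cc$ is a letter, both sides equal the sorted commutative monomial~$\ova{\cc\aa\bb}$ by~\ITEM2 and~\ITEM3, so the claim is just the associativity of sorting three letters. When~$\ell(\WW)\geq2$ the right-hand side vanishes, since~$\ova{\aa\bb}$ has length~$2$ and hence~$\WW\circ\ova{\aa\bb}=0$ by~\ITEM1; thus it remains to show~$(\WW\circ\aa)\circ\bb=0$. The crucial structural observation is that for every~$\WW$ with~$\ell(\WW)\geq2$, the product~$\WW\circ\aa$ computed in~\ITEM3--\ITEM9 is always either~$0$ or a~$\kk$-linear combination of \emph{product-type} basis elements, namely elements of~$\YYY_2$ of length~$3$ together with elements of~$\YYY_3\cup\YYY_4\cup\YYY_5$, each carrying an explicit outermost multiplication by a letter; in particular no new pure Lie bracket from~$\YYY_1$ is ever created. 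Since every product-type basis element is killed by a further~$\circ\bb$ (parts~\ITEM5,~\ITEM7 and~\ITEM9), we conclude~$(\WW\circ\aa)\circ\bb=0$, as required.

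The bulk of the work---and the main obstacle---is the case-by-case verification of that structural observation as~$\WW$ ranges over~$\YYY_1,\dots,\YYY_5$ and over both characteristics, using~\eqref{lie-commu} and~\eqref{cdot-anti-commu} to re-expand the right-hand sides of~\ITEM6 and~\ITEM8 (the expressions~$\{\{\aa,\aa_3\},\aa_2\}\cdot\aa_1$ and the like) into genuine basis elements of~$\YYY_4\cup\YYY_5$. The delicate point is precisely that these re-expansions preserve the outermost ``$\cdot$(letter)'' structure, so that the resulting elements stay product-type; granting this, their annihilation by~$\circ\bb$ is automatic and associativity follows.
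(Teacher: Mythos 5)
Your proof is correct and follows essentially the same route as the paper: commutativity read directly off Definition~\ref{defi-prod}, the three-letter case handled by sorting via \ITEM2 and \ITEM3, and every other triple killed because a $\circ$-product of a length-$\geq 2$ basis element with a letter is a linear combination of ``product-type'' elements of~$\YYY_2\cup\YYY_3\cup\YYY_4\cup\YYY_5$, which any further multiplication annihilates --- this structural observation is exactly the justification that the paper compresses into its one-line claim that both sides vanish whenever $\ell(\WW_1)+\ell(\WW_2)+\ell(\WW_3)\geq 4$. One caution on your final paragraph: the right-hand sides of Definition~\ref{defi-prod}\ITEM6 and \ITEM8 should be recognized as linear combinations of basis elements by unfolding \ITEM2, \ITEM4 and the stated convention $(\sum_i\alpha_i\Lnormed{\bb_{i,1},\bb_{i,2},\bb_{i,3}})\cdot\bb_4=\sum_i\alpha_i(\Lnormed{\bb_{i,1},\bb_{i,2},\bb_{i,3}}\cdot\bb_4)$, not by invoking~\eqref{lie-commu} and~\eqref{cdot-anti-commu}, since those are identities of metabelian Poisson algebras and $(\kk\mpb,\circ,\{-,-\})$ is not yet known to be one, so appealing to them at this stage would be circular.
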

\begin{proof}
It is enough to show that the commutativity and associativity hold on~$\mpb$.
We first prove the commutativity. For all~$\WW_1$ and~$\WW_2$ in~$\mpb$, if~$\WW_1$ or~$\WW_2$ lies in~$\XX$, then by Definition~\ref{defi-prod}\ITEM2 and~\ITEM{10}, we have~$\WW_1\circ \WW_2=\WW_2\circ \WW_1$; if~$\ell(\WW_1)\geq 2$ and~$\ell(\WW_2)\geq 2$, then by Definition~\ref{defi-prod}\ITEM1  we obtain~$\WW_1\circ \WW_2=\WW_2\circ \WW_1=0$.

Now we prove the associativity. For all~$\aa,\bb$ and~$\cc$ in~$\XX$, then
by Definition~\ref{defi-prod}\ITEM2 and~\ITEM3,  we have
$$
(\aa\circ \bb)\circ \cc=\ova{\aa\cdot\bb\cdot \cc}=\aa\circ (\bb\circ \cc).
$$
 For all~$\WW_1,\WW_2$ and~$\WW_3$ in~$\mpb$ such that~$\ell(\WW_1)+\ell(\WW_2)+\ell(\WW_3)\geq 4$, we obtain
 $$
 (\WW_1\circ \WW_2)\circ \WW_3=0=\WW_1\circ (\WW_2\circ \WW_3).
 $$
Therefore, $(\kk\mpb, \circ)$ is an associative commutative algebra.
\end{proof}

By Definition~\ref{defi-prod}, we can  show that~$(\kk\mpb, \{-,-\})$ is a Lie algebra. Some obvious formulas are as below.
\begin{lemm}\label{lie-anti-com}
  With respect to Definition~\ref{defi-prod},   we obtain

  \ITEM1 For all~$\WW_1$ and~$\WW_2$ in~$\mpb$, the equation~$\{\WW_1,\WW_2\}=-\{\WW_2,\WW_1\}$ holds.

  \ITEM2 For all~$\aa,\bb,\cc$ in~$\XX$, the equation~$\{\{\aa,\bb\},\cc\}+\{\{\bb,\cc\},\aa\}+\{\{\cc,\aa\},\bb\}=0$ holds.
\end{lemm}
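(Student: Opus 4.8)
The plan is to verify both identities by a direct appeal to the case structure of Definition~\ref{defi-prod}, using the anti-commutativity established in part~\ITEM1 to trim the work needed for part~\ITEM2. Throughout I use that the length-$1$ elements of $\mpb$ are precisely the letters of $\XX$ (the $\nn=1$ case of $\YYY_1$), so the dichotomy ``$\ell\geq 2$ versus letter'' is exhaustive.

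For part~\ITEM1 the claim $\{\WW_1,\WW_2\}=-\{\WW_2,\WW_1\}$ splits according to the lengths of $\WW_1$ and $\WW_2$. If $\ell(\WW_1)\geq 2$ and $\ell(\WW_2)\geq 2$, both sides vanish by Definition~\ref{defi-prod}\ITEM1. If exactly one of them, say $\WW_2=\aa$, lies in $\XX$, then anti-commutativity holds by the very convention of Definition~\ref{defi-prod}\ITEM{10}, which sets $\{\aa,\WW_1\}=-\{\WW_1,\aa\}$. The only genuine check is when both $\WW_1=\bb$ and $\WW_2=\aa$ lie in $\XX$: here one uses the trichotomy in Definition~\ref{defi-prod}\ITEM2, comparing $\{\bb,\aa\}$ with $\{\aa,\bb\}$ in each of the cases $\bb>\aa$, $\bb=\aa$, $\bb<\aa$, where the sign convention on $[-,-]_{_\mathsf{L}}$ makes the two expressions negatives of one another. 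I expect no difficulty here.

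For part~\ITEM2, write $J(\aa,\bb,\cc):=\{\{\aa,\bb\},\cc\}+\{\{\bb,\cc\},\aa\}+\{\{\cc,\aa\},\bb\}$. The key preliminary step is to observe that, using only the anti-commutativity of $\{-,-\}$ on pairs of letters (part~\ITEM1) together with bilinearity of the outer bracket, the trilinear form $J$ changes sign under any transposition of its three arguments; hence $J$ is totally antisymmetric. Two consequences follow at once: first, $J(\aa,\bb,\cc)=0$ whenever two of the letters coincide; second, it suffices to establish the identity for a single strict ordering, which I take to be $\aa>\bb>\cc$.

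It then remains to compute $J(\aa,\bb,\cc)$ for $\aa>\bb>\cc$. Each inner bracket of two letters is rewritten as a signed element of $\YYY_1$ via Definition~\ref{defi-prod}\ITEM2, and each resulting $\{[-,-]_{_\mathsf{L}},\aa\}$ is expanded by Definition~\ref{defi-prod}\ITEM4, giving
\begin{align*}
\{\{\aa,\bb\},\cc\}&=\Lnormed{\aa,\cc,\bb}-\Lnormed{\bb,\cc,\aa},\\
\{\{\bb,\cc\},\aa\}&=\Lnormed{\bb,\cc,\aa},\\
\{\{\cc,\aa\},\bb\}&=-\Lnormed{\aa,\cc,\bb},
\end{align*}
so that their sum telescopes to $0$, finishing the proof. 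The only place demanding care is selecting the correct branch of Definition~\ref{defi-prod}\ITEM4 in each term, according to whether the relevant second entry $\aa_2$ is $\leq$ or $>$ the incoming letter; once the ordering $\aa>\bb>\cc$ is fixed these branches are forced and the cancellation is automatic. I regard the antisymmetry reduction, rather than any individual computation, as the substantive idea, since it replaces the six strict orderings (plus the degenerate cases) by the single ordered case above.
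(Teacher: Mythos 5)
Your proof follows essentially the same route as the paper's: part \ITEM1 by the case split on lengths, and part \ITEM2 by using \ITEM1 to reduce to a single ordering of the three letters and then expanding via Definition~\ref{defi-prod}\ITEM2 and \ITEM4. Your computation for $\aa>\bb>\cc$ is the mirror image of the paper's computation for $\aa<\bb<\cc$, and it is correct: the three displayed expansions follow the forced branches of Definition~\ref{defi-prod}\ITEM4 and telescope to zero.

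There is, however, one flawed deduction. You claim that total antisymmetry of $J$ gives, ``at once,'' that $J(\aa,\bb,\cc)=0$ whenever two of the letters coincide. Antisymmetry under a transposition only yields $2J(\aa,\aa,\cc)=0$, which says nothing when $\chart(\kk)=2$ --- and characteristic $2$ is squarely within the scope of this paper (both the basis $\mpb$ and Definition~\ref{defi-prod} change in that case). The degenerate cases instead require a direct check: $\{\{\aa,\aa\},\cc\}=0$ because $\{\aa,\aa\}=0$ by Definition~\ref{defi-prod}\ITEM2 together with bilinearity, while $\{\{\aa,\cc\},\aa\}+\{\{\cc,\aa\},\aa\}=0$ by part \ITEM1 and bilinearity of the outer bracket. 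This is a one-line repair, and it is exactly how the paper dismisses the cases $\aa=\bb$ or $\bb=\cc$; but as written, your justification of the coincident case is invalid over fields of characteristic $2$. Note that your other consequence --- reduction to a single strict ordering --- is sound in every characteristic, since it uses only the sign change $J\mapsto -J$ under transpositions and never divides by $2$.
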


\begin{proof}
  \ITEM1 For all~$\aa,\bb$ in~$\XX$, by Definition~\ref{defi-prod}\ITEM2, we obtain~$\{\aa,\bb\}=-\{\bb,\aa\}$ and~$\{\aa,\aa\}=0$. If~$\ell(\WW_1)\geq 2$ and~$\WW_2$ lies in~$\XX$, then we obtain~$\{\WW_1,\WW_2\}=-\{\WW_2,\WW_1\}$ by Definition~\ref{defi-prod}\ITEM{10}. Finally, if~$\ell(\WW_1)\geq 2$ and~$\ell(\WW_2)\geq 2$, then we have~$\{\WW_1,\WW_2\}=0=-\{\WW_2,\WW_1\}$ by Definition~\ref{defi-prod}\ITEM1.

  \ITEM2 By \ITEM1, we may assume that~$\aa\leq\bb\leq \cc$.  If~$\aa<\bb<\cc$, then we obtain
$$
   \{\{\aa,\bb\},\cc\}+\{\{\bb,\cc\},\aa\}+\{\{\cc,\aa\},\bb\}
   =-\Lnormed{\bb,\aa,\cc}-\Lnormed{\cc,\aa,\bb}+\Lnormed{\bb,\aa,\cc}+\Lnormed{\cc,\aa,\bb} =0.
$$
If~$\aa=\bb$ or~$\bb=\cc$, then by~\ITEM1 again, it is clear that~$\{\{\aa,\bb\},\cc\}+\{\{\bb,\cc\},\aa\}+\{\{\cc,\aa\},\bb\}=0$.
\end{proof}

Our second step towards Lemma~\ref{become-poi} is the following lemma, which is essential in showing that the Leibniz identity and Jacobi identity hold.
\begin{lemm}\label{many-comm}
With respect to Definition~\ref{defi-prod}, for every~$\WW$ in~$\mpb$ with~$\ell(\WW)\geq 2$,  for all~$\bb,\cc$ in~$\XX$, we have

  \ITEM1 $\{\WW,\bb\}\circ \cc=-\{\WW, \cc\}\circ \bb$.

  \ITEM2 $\{\WW\circ \bb,\cc\}=\{\WW,\cc\}\circ \bb$.

  \ITEM3 $\{\{\WW,\bb\},\cc\}=\{\{\WW, \cc\},\bb\}$.
\end{lemm}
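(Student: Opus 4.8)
The plan is to verify the three identities by a direct case analysis on the form of the basis element $\WW\in\mpb$ with $\ell(\WW)\geq 2$, carried out separately for $\chart(\kk)\neq 2$ and $\chart(\kk)=2$, since Definition~\ref{defi-prod} branches on the characteristic. As each claimed equality is linear in $\WW$, it suffices to treat the cases $\WW\in\YYY_1,\YYY_2,\YYY_3$ and, according to the characteristic, $\WW\in\YYY_4$ or $\WW\in\YYY_5$. Conceptually, \ITEM1 (\WW, \bb)\circ\cc is the realization in $\kk\mpb$ of~\eqref{cdot-anti-commu}, \ITEM2 is a Leibniz-type rule in which the extra term vanishes because a product of two elements of length $\geq 2$ is killed by Definition~\ref{defi-prod}\ITEM1, and \ITEM3 is the realization of~\eqref{lie-commu}. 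I will exploit this to organize the computation, but all equalities must be checked against the explicit formulas, since at this point $\kk\mpb$ is not yet known to be a Poisson algebra (that is Lemma~\ref{become-poi}).

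First I would dispose of the cases in which both sides vanish for length reasons. If $\WW=\aa_1\aa_2\aa_3\in\YYY_2$ then $\{\WW,\bb\}=\WW\circ\bb=0$ by Definition~\ref{defi-prod}\ITEM5, and if $\WW\in\YYY_4$ (when $\chart(\kk)\neq 2$) then $\WW\circ\bb=\{\WW,\bb\}=0$ by \ITEM9; in these cases all three identities read $0=0$. For the elements $\aa_1\aa_2\in\YYY_2$, for $\WW\in\YYY_3$, and for $\WW\in\YYY_5$ (when $\chart(\kk)=2$), identities \ITEM1 and \ITEM2 likewise collapse: here $\WW\circ\bb$ is either a $\YYY_2$-element whose bracket with a letter vanishes (\ITEM5) or is $0$ (\ITEM7, \ITEM9), while $\{\WW,\bb\}$ lands in the $\YYY_3$- or $\YYY_5$-span and is annihilated by $\circ$ with a further letter (\ITEM7, \ITEM9). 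Thus the substantive content reduces to identity \ITEM3 for $\WW\in\YYY_1$, for $\aa_1\aa_2\in\YYY_2$, and (only when $\chart(\kk)=2$) for $\WW\in\YYY_3$ and $\WW\in\YYY_5$, together with identities \ITEM1 and \ITEM2 for $\WW\in\YYY_1$.

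The heart of the matter is $\WW=\lnormed{\aa_1}{\aa_n}\in\YYY_1$. The key observation is that the formulas for $\{\WW,\aa\}$ in \ITEM4, \ITEM6 and \ITEM8 coincide with the reduction of the left-normed bracket $(\WW,\aa)$ to the Bokut normal form of the free metabelian Lie algebra on $\XX$, whose linear basis is $\YYY_1$ by~\cite{bokut-basis-for-metabelian-lie}. Consequently $\{\{\WW,\bb\},\cc\}$ and $\{\{\WW,\cc\},\bb\}$ are the normal forms of $((\WW,\bb),\cc)$ and $((\WW,\cc),\bb)$, which agree by~\eqref{lie-commu}; this yields \ITEM3 on $\YYY_1$. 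I would then feed this back through the definitions to settle the remaining cases of \ITEM3: for $\WW=\aa_1\aa_2\in\YYY_2$, expand $\{\WW,\bb\}$ by the rule \ITEM3 into $\YYY_3$-terms, apply $\{-,\cc\}$ to each via \ITEM7, and use Lemma~\ref{lie-anti-com} and~\eqref{lie-commu} to obtain the symmetry; for $\WW\in\YYY_3$ and $\WW\in\YYY_5$ in characteristic $2$, the outer brackets reduce through \ITEM7 and \ITEM9 to $\circ$- and $\{-,-\}$-actions on $\YYY_1$-elements already handled.

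It remains to treat \ITEM1 and \ITEM2 for $\WW\in\YYY_1$, where the operation $\circ$ enters. When $\chart(\kk)\neq 2$ these trivialize for $\ell(\WW)\geq 3$, because $\circ$ by a letter annihilates left-normed brackets of length $\geq 4$ (\ITEM8) and the bracket of a $\YYY_4$-element with a letter is $0$ (\ITEM9); so only $\WW=\lnormed{\aa_1}{\aa_2}$ survives, for which \ITEM2 is literally the definition \ITEM7 and \ITEM1 is the realization of~\eqref{cdot-anti-commu} that I would verify directly against the seven branches~\eqref{a3=a}--\eqref{a3=a1>a2>a}. When $\chart(\kk)=2$, however, $\circ$ by a letter is nonzero on $\YYY_1$ of every length $\geq 2$, producing $\YYY_5$-terms through the uniform formulas of \ITEM6 and \ITEM8, so \ITEM1 and \ITEM2 must be checked at arbitrary length. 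I expect this characteristic-$2$ verification to be the main obstacle: one must confirm that ``bracket then multiply'' and ``multiply then bracket'' by two letters commute as claimed, passing through the order-dependent, piecewise formulas of \ITEM6, \ITEM8 and \ITEM9, and one must check that the several branches glue into globally consistent, symmetric expressions. Organizing the bookkeeping by the position of the largest letter involved, exactly as the definition itself is organized, is what keeps this finite and is the step I would be most careful about.
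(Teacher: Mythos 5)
Your case decomposition coincides with the paper's: both proofs first dispose of every $\WW$ involving a product (the $\YYY_2$-, $\YYY_3$-, $\YYY_4$-/$\YYY_5$-elements) by the annihilation rules of Definition~\ref{defi-prod}\ITEM1, \ITEM5, \ITEM7 and \ITEM9, reduce part \ITEM3 for product-type elements back to $\YYY_1$ via \ITEM7/\ITEM9 together with part \ITEM2, and then fight the real battle on $\YYY_1$. (One slip that does not matter: for $\WW\in\YYY_3$ with $\chart(\kk)\neq2$, the bracket $\{\WW,\bb\}$ lies in the span of $\YYY_4$, not of $\YYY_3\cup\YYY_5$, but it is still killed by $\circ$ with a further letter by \ITEM9.) The one genuinely different idea is your treatment of part \ITEM3 on $\YYY_1$: you observe that the formulas of \ITEM4, \ITEM6 and \ITEM8 for $\{-,\aa\}$ are exactly the normal-form reductions of $(\WW,\aa)$ in the free metabelian Lie algebra with Bokut basis $\YYY_1$, so that \ITEM3 becomes the instance of \eqref{lie-commu}, which holds in any metabelian Lie algebra. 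This is legitimate: each defining formula is indeed the Jacobi-plus-\eqref{lie-commu} rewriting of $(\WW,\aa)$ into the basis $\YYY_1$, and that identification is a one-time check. It is more conceptual than the paper's route, which verifies \ITEM3 on $\YYY_1$ by direct computation in the three cases $\cc<\aa_2$, $\bb<\aa_2\leq\cc$, $\aa_2\leq\bb$; what the paper's way buys is self-containedness, what yours buys is that \ITEM3 (and, incidentally, Lemma~\ref{lie-anti-com}\ITEM2) comes for free from known Lie identities.

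The gap, such as it is, is one of execution rather than of ideas: for part \ITEM1 with $\WW=[\aa_1,\aa_2]_{_\mathsf{L}}$ in $\chart(\kk)\neq2$ (the check against the seven branches \eqref{a3=a}--\eqref{a3=a1>a2>a}) and for parts \ITEM1 and \ITEM2 on $\YYY_1$ of arbitrary length in $\chart(\kk)=2$, you only announce the verification and its organization, and this is precisely where the paper's proof spends essentially all of its effort. Also, your proposed bookkeeping ``by the position of the largest letter'' fits the $\chart(\kk)\neq2$ definition \ITEM6, but the characteristic-$2$ branches of \ITEM6/\ITEM8 are governed by comparison with the \emph{minimal} letter $\aa_2$, and the paper accordingly organizes the characteristic-$2$ computation by which of $\aa_2,\bb,\cc$ is minimal. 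Since these verifications are finite and routine and your reductions leading up to them are correct, I read your proposal as a correct plan whose computational core remains to be carried out, not as a wrong argument.
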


\begin{proof} In the proof we shall use~$\aa_1\wdots\aa_n,...$ for letters in~$\XX$. The proofs are straightforward but there are many cases.

\ITEM1  If~$\WW=\aa_1\cdot \aa_2$ or~$\WW=\aa_1\cdot \aa_2\cdot \aa_3$ or~$\WW=\Lnormed{\aa_1\wdots\aa_n}\cdot \aa_{n+1}$ with~$\nn\geq 2$, then it is clear that~$\{\WW,\bb\}\circ \cc=0=-\{\WW,\cc\}\circ \bb$. Therefore, we may assume that~$\WW=\Lnormed{\aa_1\wdots\aa_n}$ and~$\bb\leq  \cc$.

 Assume first that~$\chart(\kk)\neq 2$. If~$\WW=\Lnormed{\aa_1\wdots\aa_n}$ lies in~$\mpb$ satisfying~$\nn\geq 3$, then we obtain~$\{\WW,\bb\}\circ\cc=0=-\{\WW,\cc\}\circ \bb$ by Definition~\ref{defi-prod}\ITEM6 and~\ITEM8. Assume~$\WW=[\aa_1,\aa_2]_{_\mathsf{L}}$ and~$\aa_1>\aa_2$. Then for this special case, we shall resort to Definition~\ref{defi-prod}\ITEM4 and~\ITEM6. (Since there are too many cases in Definition~\ref{defi-prod}\ITEM6, we add the needed equations above the equality symbol =, hoping to make it easy for the readers.)

We first consider the case~$\bb=\cc$. For this case, it is enough to show~$\{[\aa_1,\aa_2]_{_\mathsf{L}},\bb\}\circ \bb=0$. If~$\aa_2\leq \bb$, then we obtain
$$
\{[\aa_1,\aa_2]_{_\mathsf{L}},\bb\}\circ \bb=\Lnormed{\aa_1,\aa_2,\bb}\circ \bb
\overset{\eqref{a3=a}}{=}0 ;
$$
 if~$\aa_2>\bb$, then
 we obtain
 $$
 \{[\aa_1,\aa_2]_{_\mathsf{L}},\bb\}\circ \bb
 =\Lnormed{\aa_1,\bb,\aa_2}\circ \bb-\Lnormed{\aa_2,\bb,\aa_1}\circ \bb
\overset{\eqref{a1>a3>a},\eqref{a3max}}{=}-\Lnormed{\aa_2,\bb,\bb}\cdot \aa_1+\Lnormed{\aa_2,\bb,\bb}\cdot \aa_1
 =0.
 $$
 We now consider the case when~$\bb<\cc$.
 If~$\cc> \aa_1$, or, if~$\cc= \aa_1$ and~$\bb\geq\aa_2$,  then we obtain
$$\{[\aa_1,\aa_2]_{_\mathsf{L}},\bb\}\circ\cc
\overset{\eqref{amax}}{=}\{\{\aa_1,\aa_2\},\bb\}\cdot\cc
\overset{\eqref{a3max}, \eqref{a3=a1>aa2}}{=}-\Lnormed{\aa_1,\aa_2,\cc}\circ \bb
=-\{\{\aa_1,\aa_2\},\cc\}\circ \bb;
$$
if~$\cc= \aa_1$ and~$\bb<\aa_2$, then we have
\begin{multline*}
\{[\aa_1,\aa_2]_{_\mathsf{L}},\bb\}\circ \cc
=\Lnormed{\aa_1,\bb,\aa_2}\circ\cc -\Lnormed{\aa_2,\bb,\aa_1}\circ\cc\\
\overset{\eqref{amax}\eqref{a3=a}}{=}\Lnormed{\aa_1,\bb,\aa_2}\cdot\cc
\overset{\eqref{a3=a1>a2>a}}{=}-\Lnormed{\aa_1,\aa_2,\cc}\circ \bb
=-\{\Lnormed{\aa_1,\aa_2},\cc\}\circ \bb;
\end{multline*}
if~$\cc<\aa_1$ and~$\aa_2\leq \bb$, then we obtain
\begin{multline*}
 \{[\aa_1,\aa_2]_{_\mathsf{L}},\bb\}\circ\cc
 =\Lnormed{\aa_1,\aa_2,\bb}\circ\cc
 \overset{\eqref{a1>a>a3}}{=}\{\{\cc,\bb\},\aa_2\}\cdot\aa_1\\
 =-\{\{\bb,\cc\},\aa_2\}\cdot\aa_1
\overset{\eqref{a1>a3>a}}{=}-\Lnormed{\aa_1,\aa_2,\cc}\circ\bb
 =-\{[\aa_1,\aa_2]_{_\mathsf{L}},\cc\}\circ\bb;
\end{multline*}
 if~$\cc<\aa_1$ and~$\cc\geq\aa_2>\bb$, then we obtain
\begin{align*}
\{[\aa_1,\aa_2]_{_\mathsf{L}},\bb\}\circ\cc
=&\Lnormed{\aa_1,\bb,\aa_2}\circ\cc-\Lnormed{\aa_2,\bb,\aa_1}\circ\cc&\\
\overset{\eqref{a3=a},\eqref{a1>a>a3},\eqref{a3max}}{=}&\{\{\cc,\aa_2\},\bb\}\cdot\aa_1+\{\{\aa_2,\bb\},\cc\}\cdot\aa_1& \\
=&-\{\{\bb,\cc\},\aa_2\}\cdot\aa_1 \ \mbox{(by Lemma~\ref{lie-anti-com}\ITEM2)}&\\
\overset{\eqref{a1>a3>a}}{=}&-\Lnormed{\aa_1,\aa_2,\cc}\circ\bb
=-\{[\aa_1,\aa_2]_{_\mathsf{L}},\cc\}\circ\bb.&
\end{align*}
Finally, if~$\aa_2>\cc$ (in particular, we have~$\bb<\cc<\aa_2<\aa_1$), then we obtain
\begin{multline*}
\{[\aa_1,\aa_2]_{_\mathsf{L}},\bb\}\circ\cc
=\Lnormed{\aa_1,\bb,\aa_2}\circ\cc-\Lnormed{\aa_2,\bb,\aa_1}\circ\cc
\overset{\eqref{a1>a3>a},\eqref{a3max}}{=}\{\{\cc,\aa_2\},\bb\}\cdot\aa_1+\{\{\aa_2,\bb\},\cc\}\cdot\aa_1\\
 =-\{\{\aa_2,\cc\},\bb\}\cdot\aa_1-\{\{\bb,\aa_2\},\cc\}\cdot\aa_1
\overset{\eqref{a3max},\eqref{a1>a3>a}}{=}
\Lnormed{\aa_2,\cc,\aa_1}\circ\bb-\Lnormed{\aa_1,\cc,\aa_2}\circ\bb
=-\{[\aa_1,\aa_2]_{_\mathsf{L}},\cc\}\circ\bb.
\end{multline*}

Now we assume~$\chart(\kk)=2$.  Then we may assume~$\bb<\cc$. If~$\aa_2\leq \bb$, then we obtain
$$
  \{\Lnormed{\aa_1\wdots\aa_n},\bb\}\circ \cc+\{\Lnormed{\aa_1\wdots\aa_n},\cc\}\circ \bb
=\Lnormeda{\aa_1,\aa_2}{\aa_3\wdots\aa_n,\bb}{\cc}
-\Lnormeda{\aa_1,\aa_2}{\aa_3\wdots\aa_n,\cc}{\bb}=0;
$$
if~$\bb<\aa_2\leq \cc$, then we obtain
\begin{align*}
&\{\Lnormed{\aa_1\wdots\aa_n},\bb\}\circ \cc+\{\Lnormed{\aa_1\wdots\aa_n},\cc\}\circ \bb&\\
=&\Lnormed{\aa_1,\bb,\aa_2\wdots \aa_n}\circ\cc -\Lnormed{\aa_2,\bb,\ova{\aa_3\wdots\aa_n,\aa_1}} \circ\cc \\
&+\Lnormed{\aa_1, \aa_2, \ova{\aa_3\wdots\aa_n,\cc}}\circ \bb\\
=&\Lnormeda{\aa_1,\bb}{\aa_2\wdots \aa_n}{\cc}-\Lnormeda{\aa_2,\bb}{\aa_3\wdots\aa_n,\aa_1}{\cc}\\
&+\Lnormeda{\aa_1,\bb}{\aa_2\wdots \aa_n}{\cc}-\Lnormeda{\aa_2,\bb}{\aa_3\wdots\aa_n,\aa_1}{\cc}=0;&
\end{align*}
if~$\bb<\cc<\aa_2$, then we obtain
\begin{align*}
& \{\Lnormed{\aa_1\wdots\aa_n},\bb\}\circ \cc+\{\Lnormed{\aa_1\wdots\aa_n},\cc\}\circ \bb&\\
=&\Lnormeda{\aa_1,\bb}{\aa_2\wdots \aa_n}{\cc}
-\Lnormeda{\aa_2,\bb}{\aa_3\wdots\aa_n,\aa_1}{\cc}&\\
&+\Lnormed{\aa_1,\cc,\ova{\aa_2\wdots \aa_n}} \circ \bb
-\Lnormed{\aa_2,\cc,\ova{\aa_3\wdots\aa_n,\aa_1}} \circ \bb&\\
=&\Lnormeda{\aa_1,\bb}{\aa_2\wdots \aa_n}{\cc}-\Lnormeda{\aa_2,\bb}{\aa_3\wdots\aa_n,\aa_1}{\cc}
+\Lnormeda{\aa_1,\bb}{{\aa_2\wdots \aa_n}}{\cc}&\\
&-\Lnormeda{\cc,\bb}{{\aa_2\wdots \aa_n}}{\aa_1}
-\Lnormeda{\aa_2,\bb}{\aa_3\wdots\aa_n,\aa_1}{\cc}
+\Lnormeda{\cc,\bb}{{\aa_2\wdots \aa_n}}{\aa_1}=0.&
\end{align*}
\ITEM2  If~$\WW=\aa_1\cdot \aa_2$ or~$\WW=\aa_1\cdot \aa_2\cdot \aa_3$
or~$\WW=\Lnormed{\aa_1\wdots\aa_{n-1}}\cdot \aa_n$ with~$\nn\geq 2$, then we obtain~$\{\WW\circ \bb,\cc\}=0=\{\WW,\cc\}\circ \bb$.
If~$\WW=[\aa_1,\aa_2]_{_\mathsf{L}}$ with~$\aa_1>\aa_2$, then the result follows immediately from Definition~\ref{defi-prod}.
We now assume that~${\WW_1=\lnormed{\aa_1}{\aa_n}}$ with~$\nn\geq 3$.
If~$\chart(\kk)\neq 2$, then we obtain~$\{\WW_1\circ \bb,\cc\}=\{\WW_1,\cc\}\circ \bb=0$;
If~$\chart(\kk)=2$, then the strategy is to discuss according to which one is a minimal element among~$\aa_2$, $\bb$ and~$\cc$.

For the case when~$\aa_2$ is minimal among~$\aa_2$, $\bb$ and~$\cc$, that is, $\aa_2\leq \cc$ and~$\aa_2\leq \bb$~hold, we obtain
\begin{align*}
 &\{\lnormed{\aa_1}{\aa_n}\circ \bb,\cc\}
 =\{\Lnormeda{\aa_1,\aa_2}{\aa_3\wdots\aa_{n}}{\bb},\cc\}&\\
 =&\Lnormeda{\aa_1,\aa_2}{\aa_3\wdots\aa_{n},\bb}{\cc}
 =\Lnormeda{\aa_1,\aa_2}{\aa_3\wdots\aa_{n},\cc}{\bb}
 =\{\lnormed{\aa_1}{\aa_n},\cc\}\circ \bb.&
\end{align*}

For the case when~$\bb$ is minimal among~$\aa_2$, $\bb$ and~$\cc$. There are several subcases: If~$\bb<\aa_2\leq \cc$ holds, then we obtain
\begin{align*}
& \{\lnormed{\aa_1}{\aa_n}\circ \bb,\cc\}&\\
 =&\{\lnormed{\aa_1,\bb,\aa_2}{\aa_{n-1}}\cdot \aa_{n},\cc\}-\{\Lnormeda{\aa_2,\bb}{\aa_3\wdots\aa_n}{\aa_1},\cc\}&\\
  =&\{\Lnormeda{\aa_1,\bb}{\aa_2\wdots\aa_{n}}{\cc} -\Lnormeda{\aa_2,\bb}{\cc,\aa_3\wdots\aa_{n}}{\aa_1}&\\
  =&\Lnormed{\aa_1,\aa_2,\ova{\aa_3\wdots\aa_{n},\cc}}\circ \bb
  =\{\lnormed{\aa_1}{\aa_n},\cc\}\circ \bb;&
\end{align*}
if~$\bb<\cc<\aa_2$ holds, then we obtain
\begin{align*}
 &\{\lnormed{\aa_1}{\aa_n}\circ \bb,\cc\}
 =\{\lnormed{\aa_1,\bb,\aa_2}{\aa_{n-1}}\cdot \aa_{n},\cc\}-\{\Lnormeda{\aa_2,\bb}{\aa_3\wdots\aa_n}{\aa_1},\cc\}&\\
   =&\Lnormed{\aa_1,\bb,\cc,\aa_2\wdots\aa_{n-1}}\cdot \aa_n -\Lnormeda{\aa_2,\bb,\cc}{\aa_3\wdots\aa_{n}}{\aa_1}&\\
  = &\Lnormed{\aa_1,\bb,\cc,\aa_2\wdots\aa_{n-1}}\cdot \aa_n -\Lnormeda{\cc,\bb}{\aa_2\wdots\aa_{n}}{\aa_1}&\\
  &-\Lnormeda{\aa_2,\bb,\cc}{\aa_3\wdots\aa_{n}}{\aa_1}
  +\Lnormeda{\cc,\bb}{\aa_2\wdots\aa_{n}}{\aa_1}&\\
  = &\Lnormed{\aa_1,\cc,\aa_2\wdots\aa_n}\circ \bb
  - \Lnormed{\aa_2,\cc,\ova{\aa_3\wdots\aa_{n},\aa_1}}\circ\bb
  =\{\lnormed{\aa_1}{\aa_n},\cc\}\circ \bb;&
\end{align*}
If~$\bb=\cc<\aa_2$, then similar to the case when~$\bb<\cc<\aa_2$,  we can obtain the desired formula.

Finally, the proof for the case when~$\cc< \bb$ and~$\cc<\aa_2$ simultaneously hold   is similar to that for the case when~$\bb$ is minimal among~$\aa_2$, $\bb$ and~$\cc$.

\ITEM3 Without loss of generality, we assume that~$\bb<\cc$. If~$\WW=\aa_1\cdot \aa_2\cdot\aa_3$, then we obtain~$\{\{\WW,\bb\},\cc\}=0=\{\{\WW,\cc\},\bb\}$.
For~$\WW=\aa_1\cdot \aa_2$, then by~\ITEM1\ITEM2 and by Lemma~\ref{lie-anti-com}, we obtain
\begin{align*}
& \{\{\aa_1\cdot \aa_2,\bb\},\cc\}-\{\{\aa_1\cdot \aa_2,\cc\},\bb\}&\\
=&\{\{\aa_1,\bb\}\circ \aa_2,\cc\}+  \{\{\aa_2,\bb\}\circ\aa_1,\cc\}
 -\{\{\aa_1,\cc\}\circ\aa_2,\bb\}- \{\{\aa_2,\cc\}\circ \aa_1,\bb\}&\\
 =&\{\{\aa_1,\bb\},\cc\}\circ \aa_2+  \{\{\aa_2,\bb\},\cc\}\circ \aa_1
 -\{\{\aa_1,\cc\},\bb\}\circ \aa_2- \{\{\aa_2,\cc\},\bb\}\circ \aa_1 \ \mbox{(by \ITEM2)}&\\
 =&-\{\{\bb,\cc\},\aa_1\} \circ \aa_2 -\{\{\bb,\cc\},\aa_2\} \circ \aa_1&\\
 =&\{(\cc,\bb),\aa_1\} \circ \aa_2 +\{(\cc,\bb),\aa_2\} \circ \aa_1=0 \ \mbox{(by \ITEM1)}.&
\end{align*}
For~$\WW=\Lnormed{\aa_1\wdots\aa_n}$ with~$\nn\geq 2$, the proof is similar to that for~\ITEM2. More precisely,
 if~$\cc<\aa_2$~holds, then we obtain
\begin{align*}
 &\{\{\lnormed{\aa_1}{\aa_n},\bb\},\cc\}
 =\{\lnormed{\aa_1,\bb,\aa_2}{\aa_{n}},\cc\}
 -\{\Lnormed{\aa_2,\bb,\ova{\aa_3\wdots\aa_n,\aa_1}},\cc\}&\\
   =&\Lnormed{\aa_1,\bb,\cc,\aa_2\wdots\aa_{n}}
   -\Lnormed{\aa_2,\bb,\cc,\ova{\aa_3\wdots\aa_n,\aa_1}}&\\
  = &\Lnormed{\aa_1,\bb,\cc,\aa_2\wdots\aa_{n}} -\Lnormed{\cc,\bb,\ova{\aa_2\wdots\aa_{n},\aa_1}}
  -\Lnormed{\aa_2,\bb,\cc,\ova{\aa_3\wdots\aa_n,\aa_1}}
  +\Lnormed{\cc,\bb,\ova{\aa_2\wdots\aa_{n},\aa_1}}&\\
  =& \{\Lnormed{\aa_1,\cc,\aa_2\wdots\aa_n}, \bb\}
  - \{\Lnormed{\aa_2,\cc,\ova{\aa_3\wdots\aa_{n},\aa_1}},\bb\}
  =\{\{\lnormed{\aa_1}{\aa_n},\cc\}, \bb\};&
\end{align*}
If~$\bb<\aa_2\leq \cc$~holds, then we obtain
\begin{align*}
 &\{\{\lnormed{\aa_1}{\aa_n},\bb\},\cc\}
 =\{\lnormed{\aa_1,\bb,\aa_2}{\aa_{n}},\cc\}
 -\{\Lnormed{\aa_2,\bb,\ova{\aa_3\wdots\aa_n,\aa_1}},\cc\}&\\
   =&\Lnormed{\aa_1,\bb,\ova{\cc,\aa_2\wdots\aa_{n}}}
   -\Lnormed{\aa_2,\bb,\ova{\cc,\aa_3\wdots\aa_n,\aa_1}} &\\
  =&\{\Lnormed{\aa_1,\aa_2,\ova{\cc,\aa_3\wdots\aa_n}} , \bb\}
  =\{\{\Lnormed{\aa_1\wdots\aa_n},\cc\}, \bb\};&
\end{align*}
If~$\aa_2\leq \bb$~holds, then we obtain
$$ \{\{\lnormed{\aa_1}{\aa_n},\bb\},\cc\}
  =\Lnormed{\aa_1,\aa_2,\ova{\bb,\cc,\aa_3\wdots\aa_n}}
  =\{\{\Lnormed{\aa_1\wdots\aa_n},\cc\}, \bb\}.$$
Finally, assume~$\WW=\Lnormed{\aa_1\wdots\aa_{n}}\cdot \aa_{n+1}$ and~$\nn\geq 2$. If~$\chart(\kk)\neq 2$, then it is obvious that~$\{\{\WW,\bb\},\cc\}=0=\{\{\WW,\cc\},\bb\}$. If~$\chart(\kk)=2$, then
we have~$$\WW=\Lnormed{\aa_1\wdots\aa_{n}}\cdot \aa_{n+1}
=\Lnormed{\aa_1\wdots\aa_{n}}\circ\aa_{n+1}.$$ So by~\ITEM2 and by the above reasoning, we obtain
$$\{\{\WW,\bb\},\cc\}=\{\{\Lnormed{\aa_1\wdots\aa_{n}},\bb\},\cc\}\circ\aa_{n+1}
=\{\{\Lnormed{\aa_1\wdots\aa_{n}},\cc\},\bb\}\circ\aa_{n+1}
=\{\{\WW,\cc\},\bb\}.$$
The proof is completed.
\end{proof}

Now we are ready to prove that the Jacobi identity holds in~$(\kk\mpb, \{-,-\})$. Moreover, we have the following lemma.

\begin{lemm}\label{become-poi}
With respect to Definition~\ref{defi-prod}, the space~$(\kk\mpb,\circ, \{-,-\})$ becomes a metabelian Poisson algebra generated by a well-ordered set~$\XX$.
\end{lemm}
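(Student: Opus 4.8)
The plan is to verify the four remaining requirements in turn: that $\{-,-\}$ satisfies the Jacobi identity (antisymmetry being already supplied by Lemma~\ref{lie-anti-com}\ITEM1, so that $(\kk\mpb,\{-,-\})$ is a Lie algebra), that the Leibniz identity~\eqref{leibniz} holds, that the metabelian relation~\eqref{metabelian-poisson-def} is satisfied, and that $\XX$ generates $\kk\mpb$. Each of these identities is multilinear, so by Lemma~\ref{become-comm} and bilinearity it suffices to test it on basis elements drawn from $\mpb$. The one structural fact I will use again and again is that, by Definition~\ref{defi-prod}\ITEM1, any $\circ$- or $\{-,-\}$-product of two basis elements each of length $\geq 2$ is zero, while, as one reads off from every clause of Definition~\ref{defi-prod}, a product that is not forced to vanish is always a linear combination of basis elements whose length equals the sum of the lengths of its two factors.

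For the Jacobi identity I first observe that, since $\{-,-\}$ is antisymmetric, the Jacobiator $J(\xx,\yy,\zz)=\{\{\xx,\yy\},\zz\}+\{\{\yy,\zz\},\xx\}+\{\{\zz,\xx\},\yy\}$ is alternating; it therefore vanishes as soon as two of its arguments agree, so I may assume $\xx,\yy,\zz$ are pairwise distinct basis elements and reorder them at will up to sign. If at least two of them, say $\yy$ and $\zz$, have length $\geq 2$, then $\{\yy,\zz\}=0$ while the other two summands are brackets of two length-$\geq 2$ elements, so $J=0$ by Definition~\ref{defi-prod}\ITEM1. If all three are letters, the vanishing is exactly Lemma~\ref{lie-anti-com}\ITEM2. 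In the only remaining configuration a single argument $\xx=\WW$ has length $\geq 2$ and $\yy=\bb$, $\zz=\cc$ are letters; here the middle term $\{\{\bb,\cc\},\WW\}$ is again a bracket of two length-$\geq 2$ elements and vanishes, and what remains is $\{\{\WW,\bb\},\cc\}=\{\{\WW,\cc\},\bb\}$, which is precisely Lemma~\ref{many-comm}\ITEM3.

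For the Leibniz identity I check that $\{\xx,\,\yy\circ\zz\}=\{\xx,\yy\}\circ\zz+\yy\circ\{\xx,\zz\}$ on all triples of basis elements. When $\ell(\xx)\geq 2$ the left-hand side is $0$, because $\yy\circ\zz$, when nonzero, has length $\geq 2$; on the right, if both $\yy$ and $\zz$ have length $\geq 2$ both summands vanish, if $\ell(\yy)\geq 2$ and $\zz=\cc$ is a letter then $\{\xx,\yy\}=0$ and $\yy\circ\{\xx,\cc\}=0$, and if $\yy=\bb$, $\zz=\cc$ are letters the right-hand side equals $\{\xx,\bb\}\circ\cc+\{\xx,\cc\}\circ\bb$, which is $0$ by Lemma~\ref{many-comm}\ITEM1. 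When $\xx=\aa$ is a letter I split on $\yy,\zz$: if both have length $\geq 2$ all three products are zero; if $\yy=\bb$, $\zz=\cc$ are letters the identity follows directly from the formula for $\{\bb\circ\cc,\aa\}$ in Definition~\ref{defi-prod}\ITEM3, together with antisymmetry and Definition~\ref{defi-prod}\ITEM4; and if $\ell(\yy)\geq 2$ and $\zz=\cc$ is a letter (the mirror case being handled by commutativity of $\circ$), the term $\yy\circ\{\aa,\cc\}$ vanishes and the remaining identity $\{\yy\circ\cc,\aa\}=\{\yy,\aa\}\circ\cc$ is exactly Lemma~\ref{many-comm}\ITEM2.

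Finally, since every nonzero $\circ$- or $\{-,-\}$-product of two basis elements lands in the span of the basis elements of length $\geq 2$, the square $\mathcal{P}^2=\kk\mpb\circ\kk\mpb+\{\kk\mpb,\kk\mpb\}$ is contained in that span; hence $\mathcal{P}^2\circ\mathcal{P}^2=\{\mathcal{P}^2,\mathcal{P}^2\}=0$ by Definition~\ref{defi-prod}\ITEM1, and $\kk\mpb$ is metabelian by Lemma~\ref{meta-equiv-def}. That $\XX$ generates $\kk\mpb$ follows by induction on length: each basis element is either a letter or is obtained from a strictly shorter basis element by taking $\circ$ or $\{-,-\}$ with a single letter --- for instance $\Lnormed{\aa_1\wdots\aa_n}=\{\Lnormed{\aa_1\wdots\aa_{n-1}},\aa_n\}$ when $\aa_n\geq\aa_2$, the elements of $\YYY_3$ and (for $\chart(\kk)\neq 2$) $\YYY_4$ arise as $\WW\circ\aa$ with $\WW\in\YYY_1$, and for $\chart(\kk)=2$ the elements of $\YYY_5$ arise the same way once the adjoined letter is chosen so that the largest letter is left outside the bracket. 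I expect the main obstacle to be purely organizational: making the case analysis for Jacobi and especially Leibniz genuinely exhaustive, and matching each branch to the correct clause of Lemma~\ref{many-comm}. The substantive computations --- in particular all the characteristic-dependent bookkeeping of Definition~\ref{defi-prod}\ITEM6--\ITEM9 --- have already been absorbed into Lemma~\ref{many-comm}, so no further hard calculation should be required here.
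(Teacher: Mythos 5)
Your proof is correct and follows essentially the same route as the paper's: reduce each identity to basis elements of $\mpb$, dispatch the Jacobi identity via Lemma~\ref{lie-anti-com}\ITEM2 and Lemma~\ref{many-comm}\ITEM3, the Leibniz identity via Definition~\ref{defi-prod}\ITEM3 and Lemma~\ref{many-comm}\ITEM1--\ITEM2, and metabelianity via Definition~\ref{defi-prod}\ITEM1 together with Lemma~\ref{meta-equiv-def}. Your explicit length-induction argument that $\XX$ generates $\kk\mpb$ is a point the paper's proof leaves implicit, and it is a welcome (and correct) addition rather than a divergence.
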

\begin{proof}
We first show that~$(\kk\mpb,\circ, \{-,-\})$ is a Lie algebra.
By Lemma~\ref{lie-anti-com} and by Definition~\ref{defi-prod}, for all~$\WW_1$ and~$\WW_2$ in~$\mpb$, we obtain~$\{\WW_1,\WW_2\}=-\{\WW_2,\WW_1\}$ and~$\{\WW_1,\WW_1\}=0$. Therefore, for every~$\xx=\sum_{1\leq \ii\leq n}\alpha_{i}\WW_i$ in~$\kk\mpb$, we have
$$\{\xx,\xx\}=\sum_{1\leq i,j\leq n}\alpha_{i}\alpha_{j}\{\WW_i,\WW_j\}=0.$$

As for the Jacobi identity, similar to the above reasoning, it is enough to show that, for all~$\WW_1,\WW_2$ and~$\WW_3$ in~$\mpb$, we have
\begin{equation}\label{Jacobi}
\{\{\WW_1,\WW_2\},\WW_3\}-\{\{\WW_1,\WW_3\},\WW_2\}=-\{\{\WW_2,\WW_3\},\WW_1\}.
\end{equation}
Without loss of generality, we may assume that~$\ell(\WW_1)\geq \ell(\WW_2)\geq \ell(\WW_3)$.
By Definition~\ref{defi-prod}, if~$\ell(\WW_1)\geq \ell(\WW_2)\geq 2$,  then it is clear that~\eqref{Jacobi} holds.
Assume now that~$\WW_2=\bb$ and~$\WW_3=\cc$ lie in~$\XX$. Then by Lemma~\ref{lie-anti-com}\ITEM2 and by Lemma~\ref{many-comm}\ITEM3, Eq.~\eqref{Jacobi} holds.

By Lemma~\ref{become-comm}, to prove that~$(\kk\mpb,\circ, \{-,-\})$ is a Poisson algebra, it remains to show that~$(\kk\mpb,\circ, \{-,-\})$ satisfies the Leibniz identity. Moreover, it is enough to show that, for all~$\WW_1,\WW_2,\WW_3$ in~$\mpb$, we have
\begin{equation}\label{leibnizp}
  \{\WW_1\circ \WW_2,\WW_3\}=\{\WW_1,\WW_3\}\circ \WW_2+\{\WW_2,\WW_3\}\circ \WW_1.
\end{equation}
By Lemma~\ref{become-comm}, we may assume~$\ell(\WW_1)\geq\ell(\WW_2)$.
 If two of~$\ell(\WW_1),\ell(\WW_2)$ and~$\ell(\WW_3)$ are at least 2, or, if~$\ell(\WW_1)=\ell(\WW_2)=\ell(\WW_3)=1$, then by Definition~\eqref{defi-prod}, we obtain Eq.~\eqref{leibnizp} immediately.
Assume~$\ell(\WW_1)\geq2$ and~$\ell(\WW_2)=\ell(\WW_3)=1$. Then by Definition~\ref{defi-prod}, we have~$\{\WW_2,\WW_3\}\circ \WW_1=0$, and we obtain~$\{\WW_1\circ \WW_2,\WW_3\}=\{\WW_1,\WW_3\}\circ \WW_2$ by Lemma~\ref{many-comm}\ITEM2.  Finally, assume~$\ell(\WW_3)\geq 2$ and~$\ell(\WW_1)=\ell(\WW_2)=1$, then by Lemma~\ref{lie-anti-com}\ITEM1 and by Lemma~\ref{many-comm}\ITEM1, we obtain~$\{\WW_1,\WW_3\}\circ \WW_2+\{\WW_2,\WW_3\}\circ \WW_1=0$, and by Definition~\ref{defi-prod}, we obtain~$\{\WW_1\circ \WW_2,\WW_3\}=0$. Therefore,  Eq.~\eqref{leibnizp} holds.

Finally, for all~$\WW_1$ and~$\WW_2$ in~$\mpb$ satisfying~$\ell(\WW_1)\geq2$ and~$\ell(\WW_2)\geq2$, we have~$\WW_1\circ\WW_2=\{\WW_1,\WW_2\}=0$. Therefore, by Lemma~\ref{meta-equiv-def}, $(\kk\mpb, \{-,-\})$ is a metabelian Poisson algebra.
\end{proof}

We are now in a position to show that~$\mpb$ forms a linear basis for a free metabelian Poisson algebra generated by the well-ordered set~$\XX$.

\begin{theorem}\label{iso}
With respect to Definition~\ref{defi-prod}, the algebra~$(\kk\mpb,\circ,\{-,-\})$ is isomorphic to the free metabelian Poisson algebra~$\MP\XX$ generated by~$\XX$. In particular, the set~$\mpb$ forms a linear basis for~$\MP\XX$.
\end{theorem}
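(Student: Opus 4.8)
The plan is to exploit the universal property of the free metabelian Poisson algebra together with a dimension count performed one multidegree at a time, so that the concrete algebra $(\kk\mpb,\circ,\{-,-\})$, whose linear basis $\mpb$ is known by construction, gets identified with $\MP\XX$. First I would invoke Lemma~\ref{become-poi}: $(\kk\mpb,\circ,\{-,-\})$ is a metabelian Poisson algebra generated by $\XX$. Since $\MP\XX$ is the free metabelian Poisson algebra on $\XX$, the assignment $\aa\mapsto\aa$ for $\aa\in\XX$ extends uniquely to a homomorphism of metabelian Poisson algebras $\phi\colon\MP\XX\to\kk\mpb$, and $\phi$ is surjective because its image is the subalgebra generated by $\phi(\XX)=\XX$, which is all of $\kk\mpb$.

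Next I would set up a multigrading. Both algebras are graded by multidegree $\mu\in\mathbb{N}^{(\XX)}$, the multiset of letters of $\XX$ occurring in a monomial. On the one hand, the defining metabelian identities are multihomogeneous (scaling a single generator induces an automorphism preserving the identities, hence preserving the verbal ideal), so $\MP\XX=\bigoplus_\mu(\MP\XX)_\mu$, each component being finite-dimensional since only finitely many Poisson monomials sit on a fixed finite multiset of generators. On the other hand, every element of $\mpb$ carries a multidegree, and inspecting Definition~\ref{defi-prod} each of $\circ$ and $\{-,-\}$ sends a pair of basis monomials either to $0$ or to a combination of basis monomials with the same total multiset of letters; hence $\kk\mpb=\bigoplus_\mu(\kk\mpb)_\mu$ with $(\kk\mpb)_\mu=\mathsf{span}_\kk\,(\mpb\cap\mu)$, where I write $\mpb\cap\mu$ for the (finite) set of elements of $\mpb$ of multidegree $\mu$. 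Because $\phi$ takes each degree-one generator to itself and both operations are multidegree-additive, $\phi$ is multigraded, so it restricts to $\phi_\mu\colon(\MP\XX)_\mu\to(\kk\mpb)_\mu$.

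The key step is the dimension count in a fixed multidegree $\mu$. By Lemma~\ref{generating-set}, $\mpb$ linearly generates $\MP\XX$, so $\mpb\cap\mu$ spans $(\MP\XX)_\mu$ and $\dim_\kk(\MP\XX)_\mu\leq|\mpb\cap\mu|$. Since $\mpb\cap\mu$ is a basis of $(\kk\mpb)_\mu$, we have $\dim_\kk(\kk\mpb)_\mu=|\mpb\cap\mu|$. As $\phi$ is surjective and multigraded, $\phi_\mu$ is onto, whence $\dim_\kk(\MP\XX)_\mu\geq\dim_\kk(\kk\mpb)_\mu=|\mpb\cap\mu|$. Thus all three quantities coincide and are finite, so $\phi_\mu$ is a surjection between finite-dimensional spaces of equal dimension, hence an isomorphism; as $\mu$ ranges over all multidegrees this yields that $\phi$ is an isomorphism of metabelian Poisson algebras. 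Moreover, in each $\mu$ the spanning family $\mpb\cap\mu$ has cardinality exactly $\dim_\kk(\MP\XX)_\mu$, so it is automatically linearly independent (and its members are pairwise distinct in $\MP\XX$), which gives the final assertion that $\mpb$ is a linear basis of $\MP\XX$.

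The proof of the theorem itself is therefore short, the substantive work having already been discharged in Lemmas~\ref{become-comm}--\ref{become-poi} and~\ref{generating-set}. The only point demanding a little care is the multigrading claim for $\kk\mpb$, namely that every clause of Definition~\ref{defi-prod} preserves the multiset of letters so that the components $(\kk\mpb)_\mu$ are well defined and $\phi$ is graded; this is a routine inspection but must be checked uniformly across the $\chart(\kk)=2$ and $\chart(\kk)\neq2$ cases. An alternative, more laborious route would be to verify directly that $\phi$ fixes each normal-form expression of $\mpb$, tracing the clauses of Definition~\ref{defi-prod}; the dimension count above renders that case analysis unnecessary.
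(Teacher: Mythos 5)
Your argument is correct in substance and shares its skeleton with the paper's proof---both invoke Lemma~\ref{become-poi} plus the universal property of~$\MP\XX$ to obtain the homomorphism $\varphi\colon\MP\XX\to\kk\mpb$ fixing~$\XX$, and both rely on Lemma~\ref{generating-set} for spanning---but the finishing step is genuinely different. The paper simply observes that $\varphi(\WW)=\WW$ for every $\WW\in\mpb$: Definition~\ref{defi-prod} was engineered precisely so that evaluating each normal-form monomial inside~$\kk\mpb$ returns that monomial (in the ``sorted'' cases of clauses \ITEM2, \ITEM3, \ITEM4, \ITEM6 and~\ITEM8 this is read off directly), hence $\varphi$ carries the spanning set~$\mpb$ of~$\MP\XX$ bijectively onto the chosen basis of~$\kk\mpb$, and bijectivity of~$\varphi$ together with linear independence of~$\mpb$ follow in two lines. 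This is exactly the ``more laborious route'' you set aside, though by design it is nearly immediate rather than laborious. Your multigraded dimension count replaces that pointwise verification: it needs only that the operations of Definition~\ref{defi-prod} preserve the multiset of letters, not that they reproduce normal forms, so it is more robust to how the multiplication table happens to be written; the price is the grading machinery, and that is where the one real weakness of your write-up sits.

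Your justification of the decomposition $\MP\XX=\bigoplus_\mu(\MP\XX)_\mu$---invariance of the verbal ideal under the scalings $\aa\mapsto\lambda\aa$---is not sufficient over small finite fields: extracting multihomogeneous components from scaling invariance is a Vandermonde argument and requires more distinct nonzero scalars than the degrees involved. Over the field with two elements the only available scalar is $\lambda=1$, and $\chart(\kk)=2$ is one of the two cases this theorem must cover, so as stated the argument has a gap exactly where the paper's case analysis is most delicate. The multigrading claim itself is true over every field, but for a different reason: the free Poisson algebra is multigraded, and the additional identities imposed to pass to~$\MP\XX$ (the metabelian identities of Lemma~\ref{meta-equiv-def}) are multilinear, so every generator of the corresponding ideal decomposes by multilinearity into multihomogeneous elements; the ideal is therefore graded and the quotient inherits the multigrading. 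With that substitution your proof goes through as written.
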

\begin{proof}
By Lemma~\ref{generating-set}, $\mpb$ is a linear generating set for~$\MP\XX$.
Now define a homomorphism~$\varphi: \MP\XX \longrightarrow \kk\mpb$, induced by~$\varphi(\aa)=\aa$
for every~$\aa$ in~$\XX$. Then for every~$\WW$ in~$\mpb$, we have~$\varphi(\WW)=\WW$ and~$\varphi(\mpb)=\mpb$. In particular, the homomorphism~$\varphi$ is an isomorphism, and thus the set~$\mpb$ is a linear basis for~$\MP\XX$.
\end{proof}

\begin{rema}\label{iden-rema}
Because of Theorem~\ref{iso}, we can identify~$\MP\XX$ with~$\kk\mpb$. In particular, Definition~\ref{defi-prod} offers the multiplication table of elements in~$\mpb$. Moreover, to show that~$\mpb$ is a linear basis of the free metabelian Poisson algebra generated by~$\XX$, it is enough to assume that~$\XX$ is a linear ordered set. But in the next section, when we are considering the \gsb, we have to assume that~$\XX$ is a well-ordered set.
\end{rema}

To conclude this subsection, we observe that, the Freiheitssatz for metabelian Poisson algebras does not hold in general. For instance, let~$\ff=(\bb,\aa)\cdot\bb-\aa$, where~$\aa,\bb$ are letters and~$\aa\prec\bb$. Then we have~$\ff\cdot \aa=\aa\cdot \aa$. So the subalgebra of~$\PMP{\{\aa,\bb\}}{\{\ff\}}$ generated by~$\{\aa\}$ is not a free metabelian Poisson algebra.

\section{Finitely presented metabelian Poisson algebras}\label{main-result}
Our aim in this section is to show that, the word problem for every finitely presented metabelian Poisson algebra is solvable. We shall elaborate a Gr\"{o}bner--Shirshov basis method for metabelian Poisson algebras,
which begins with defining normal~$\SS$-polynomials satisfying good properties. We also remark that Gr\"{o}bner--Shirshov bases methods for algebras are in general not unique. And, the principle of our way of elaborating a Gr\"{o}bner--Shirshov basis method for metabelian Poisson algebras is to avoid discussions as many as possible.
For~$\SS \subseteq \MP\XX$, we denote by~$\PMP\XX\SS$ the metabelian Poisson algebra presented by a set~$\XX$ of generators and a set~$\SS$ of defining relations, that is, by definition, the quotient~$\MP\XX{/}\Id\SS$, where $\Id\SS$ is the ideal of~$\MP\XX$ generated by~$\SS$.

\subsection{Normal~$\SS$-polynomials}
We first introduce a well order on~$\mpb$. Though the set~$\mpb$ depends on~$\chart(\kk)$, we want to offer the same rule for comparing elements in~$\mpb$. For every~$\WW$ in~$\mpb$, denote by~$\pb(\WW)$ the number of Poisson brackets that appear in~$\WW$, and denote by~$\ell(\WW)$ the \emph{length} of~$\WW$, that is, the number of letters (with repetitions) that appear in~$\WW$.
For monomial~$\WW=\Lnormed{\aa_1\wdots\aa_n}$ or~$\WW=\Lnormed{\aa_1\wdots\aa_{n-1}}\cdot \aa_{n}$ or~$\WW=\aa_1...\aa_n$ in~$\mpb$, define
$$\wt(\WW)=(\ell(\WW),\pb(\WW),\aa_1\wdots\aa_n).$$
Finally, for all~$\WW_1$ and~$\WW_2$ in~$\mpb$, define~$\WW_1<\WW_2$ if~$\wt(\WW_1)<\wt(\WW_2)$ lexicographically. It is clear that~$<$ is a well order on~$\mpb$.
 \begin{exam}
 For all~$\aa,\bb,\cc,\aa_1\wdots \aa_{n+1}$ in~$\XX$ satisfying~$\aa_2\leq \pdots\leq \aa_{n+1}$, $\nn\geq 1$, we have the followings:

  \ITEM1 $\ell(\aa_1\cdot \aa_1)=2$  and~$\pb(\aa_1\cdot \aa_1)=0$.

  \ITEM2 $\ell(\Lnormed{\aa_1\wdots\aa_n})=n$ and~$\pb(\Lnormed{\aa_1\wdots\aa_n})=n-1$.

   \ITEM3 $\ell(\Lnormed{\aa_1\wdots\aa_n}\cdot\aa_{n+1})=n+1$ and~$\pb(\Lnormed{\aa_1\wdots\aa_n}\cdot\aa_{n+1})=n-1$ if~$\Lnormed{\aa_1\wdots\aa_n}\cdot\aa_{n+1}$ lies in~$\mpb$.

   \ITEM4 If~$\aa<\bb<\cc$ holds, then we have~$\aa\cdot \bb\cdot \cc<(\bb,\aa)\cdot \cc<(\cc,\aa)\cdot \bb<(\cc,\bb)\cdot \aa<\Lnormed{\bb,\aa,\cc}<\Lnormed{\cc,\aa,\bb}$.
 \end{exam}

For every~$\ff=\sum_i\alpha_i\WW_i$ in~$\MP\XX$, where each~$\alpha_i\neq 0$ lies in~$\kk$, each~$\WW_i$ lies in~$\mpb$ and satisfies~$\WW_1>\WW_2>\cdots$, we call~$\WW_1$ the \emph{leading monomial} of~$\ff$, and denote it by~$\bar\ff$; we call~$\alpha_1$ the \emph{leading coefficient} of~$\ff$, and denote it by~$\lcoe\ff$. A polynomial~$\ff$ is called \emph{monic} if~$\lcoe\ff=1$, and, a subset~$\SS$ of~$\MP\XX$ is called monic if every element of~$\SS$ is monic. Finally, define~$\bar0=0$ and define~$0<\WW$ for every~$\WW$ in~$\mpb$.

We observe that the order~$<$ is a monomial order in the following sense.
\begin{lemm}\label{monomial-order}
Suppose~$\WW\in\mpb$ and assume~$\wt(\WW)=(\ell(\WW),\pb(\WW),\bb_1\wdots\bb_m)$. If~$\ell(\WW)\geq5$ holds, then for all~$\aa_1\wdots\aa_n$ in~$\XX$,
 we have
$$\wt(\ov{\Lnormed{\WW,\aa_1\wdots\aa_n}})=(\ell(\WW)+\nn,\pb(\WW)+\nn,\bb_1, \ova{\bb_2\wdots\bb_m,\aa_1\wdots\aa_n}).$$
In particular, for all~$\WW_1$ and~$\WW_2$ in~$\mpb$ satisfying~$\ell(\WW_1)\geq 5$ and~$\WW_2<\WW_1$, for all~$\aa_1\wdots\aa_n$ in~$\XX$, we have~$\ov{\Lnormed{\WW_2,\aa_1\wdots\aa_n}}<\ov{\Lnormed{\WW_1,\aa_1\wdots\aa_n}}$.
\end{lemm}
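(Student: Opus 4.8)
The plan is to reduce everything to the effect of a single Poisson bracket with one letter and then iterate. Both the bracket and the multiplication by a letter are bihomogeneous in $\MP\XX$ for the pair $(\ell,\pb)$, since the defining identities (Leibniz, Jacobi, (anti)commutativity, and the metabelian relations) all preserve the number of letters and the number of brackets. Consequently every monomial occurring in $\Lnormed{\WW,\aa_1\wdots\aa_n}$ lies in the single homogeneous component of length $\ell(\WW)+n$ and bracket number $\pb(\WW)+n$, and inside such a component two basis monomials are compared by their words alone. I will exploit this throughout.

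First I would establish the base computation, which is the main obstacle. The bound $\ell(\WW)\geq5$ excludes the bounded-length families $\YYY_2,\YYY_3,\YYY_4$ (in characteristic $\neq2$ there are moreover no basis monomials of the form $\Lnormed{\cdots}\cdot(\text{letter})$ of length $\geq5$, as $\YYY_5$ is absent and $\YYY_4$ has length $4$), so that $\WW\in\YYY_1$ when $\chart(\kk)\neq2$ and $\WW\in\YYY_1\cup\YYY_5$ when $\chart(\kk)=2$; in every case $\WW$ has head $\bb_1>\bb_2$ and sorted tail $\bb_2\leq\pdots\leq\bb_m$, and $\ell(\WW)\geq5$ puts us in the uniform regime of Definition~\ref{defi-prod}\ITEM8 and~\ITEM9. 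Applying those items to a single letter $\aa$, I would check in each of the few cases ($\aa\geq\bb_2$ versus $\aa<\bb_2$, and $\YYY_1$ versus $\YYY_5$) that $\{\WW,\aa\}$ is a nonzero combination of at most two basis monomials, all of length $\ell(\WW)+1$ and bracket number $\pb(\WW)+1$, whose leading one keeps $\bb_1$ as head and has tail $\ova{\bb_2\wdots\bb_m,\aa}$; the two competing terms always have distinct heads $\bb_1\neq\bb_2$, so the leading word never cancels. This yields $\wt(\ov{\{\WW,\aa\}})=(\ell(\WW)+1,\pb(\WW)+1,\bb_1,\ova{\bb_2\wdots\bb_m,\aa})$. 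This step is where the characteristic and the operation tables genuinely intervene, and it must be done by hand from Definition~\ref{defi-prod}.

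Second I would isolate the elementary combinatorial fact that inserting one common letter into two sorted words preserves the lexicographic order: if $s,t$ are ascending words of equal length with $s<t$ lexicographically, then $\ova{s,\aa}<\ova{t,\aa}$ (a short case split on the position of $\aa$ relative to the first place where $s$ and $t$ differ gives this). Combined with the base computation, this gives order preservation for a single letter: if $\WW'$ and $\WW''$ lie in one homogeneous component of length $\geq5$ with $\WW''<\WW'$, then comparing $\bb_1$-heads first and sorted tails second yields $\ov{\{\WW'',\aa\}}<\ov{\{\WW',\aa\}}$.

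Finally I would prove the displayed formula by induction on $n$. Writing $\gg=\Lnormed{\WW,\aa_1\wdots\aa_{n-1}}=\sum_i\alpha_i\WW_i$ with $\WW_1=\ov\gg>\WW_2>\cdots$ and all $\WW_i$ in the length-$(\ell(\WW)+n-1)$ component (hence of length $\geq5$), bilinearity gives $\Lnormed{\WW,\aa_1\wdots\aa_n}=\{\gg,\aa_n\}=\sum_i\alpha_i\{\WW_i,\aa_n\}$. Applying the base computation to $\WW_1=\ov\gg$, whose tail is already $\ova{\bb_2\wdots\bb_m,\aa_1\wdots\aa_{n-1}}$ by the inductive hypothesis, and applying the order-preservation step to the remaining $\WW_i$, I get that $\ov{\{\WW_1,\aa_n\}}$ strictly dominates every monomial of every $\{\WW_i,\aa_n\}$ with $i\geq2$; it therefore cannot cancel and equals $\ov{\{\gg,\aa_n\}}$, with weight $(\ell(\WW)+n,\pb(\WW)+n,\bb_1,\ova{\bb_2\wdots\bb_m,\aa_1\wdots\aa_n})$ (using that re-sorting an already sorted tail after one more insertion equals sorting the whole tail at once). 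This proves the main formula. The ``in particular'' clause then follows by trichotomy: if $\WW_2<\WW_1$ with $\ell(\WW_1)\geq5$, then either $\ell(\WW_2)<\ell(\WW_1)$, in which case the length coordinate alone forces $\ov{\Lnormed{\WW_2,\aa_1\wdots\aa_n}}<\ov{\Lnormed{\WW_1,\aa_1\wdots\aa_n}}$ (or the left-hand side is $0$), or $\ell(\WW_2)=\ell(\WW_1)$, in which case the explicit weight formula together with the insertion fact gives the strict inequality.
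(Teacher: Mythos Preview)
Your proposal is correct and follows essentially the same route as the paper: handle the case $n=1$ by reading off Definition~\ref{defi-prod}\ITEM8 and~\ITEM9 (which is where the hypothesis $\ell(\WW)\geq5$ and the case split $\YYY_1$ versus $\YYY_5$ enter), then induct on~$n$. You are more explicit than the paper on two points the paper leaves implicit --- the sorted-insertion fact that $\ova{s,\aa}<\ova{t,\aa}$ for equal-length ascending words $s<t$, and the observation that the induction step must expand $\Lnormed{\WW,\aa_1\wdots\aa_{n-1}}$ linearly and invoke the already-proved single-letter order preservation termwise --- but this is the same argument, just written out more carefully.
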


\begin{proof}
We first assume~$\nn=1$.
If~$\WW=\Lnormed{\bb_1\wdots\bb_m}$, then by Remark~\ref{iden-rema} and by Definition~\ref{defi-prod}, we obtain
$$(\WW,\aa_1)=
\begin{cases}
\Lnormed{\bb_1,\bb_2,\ova{\bb_3\wdots\bb_{m},\aa_1}}, &   \mbox{if } \aa_1\geq \bb_2, \\
\lnormed{\bb_1,\aa_1,\bb_2}{\bb_m}- \Lnormed{\bb_2,\aa_1,\ova{\bb_3\wdots\bb_m,\bb_1}}, &   \mbox{if } \aa_1< \bb_2.
\end{cases}
$$
Since~$\bb_1>\bb_2$, we have~$\ov{(\WW,\aa_1)}=\Lnormed{\bb_1,\ova{\bb_2\wdots\bb_{m},\aa_1}}$. In particular, the polynomial~$(\WW,\aa_1)$ is monic and we obtain
$$\wt(\ov{(\WW,\aa_1)})=(\ell(\WW)+1,\pb(\WW)+1,\bb_1,\ova{\bb_2\wdots\bb_m,\aa_1}).$$
If~$\WW=\Lnormed{\bb_1\wdots\bb_{m-1}}\cdot \bb_m$, then since~$\mm\geq 5$ and~$\WW$ in~$\mpb$, we deduce~$\chart(\kk)=2$. Therefore, we obtain
$$ (\WW, \aa_1)=
\begin{cases}
 \Lnormeda{\bb_1,\bb_2}{\bb_3\wdots \bb_m}{\aa_1}, &   \mbox{if } \aa_1\geq \bb_2, \\
 \lnormed{\bb_1,\aa_1,\bb_2}{\bb_{n-1}}\cdot \bb_m- \Lnormeda{\bb_2,\aa_1}{\bb_3\wdots\bb_m}{\bb_1}, &   \mbox{if } \aa_1< \bb_2.
\end{cases}
$$
Again, since~$\bb_1>\bb_2$, we have~$\ov{(\WW,\aa_1)}=\Lnormeda{\bb_1}{\bb_2\wdots\bb_{m}}{\aa_1}$.
In particular, we obtain
$$\wt(\ov{(\WW,\aa_1)})
=(\ell(\WW)+1,\pb(\WW)+1,\bb_1,\ova{\bb_2\wdots\bb_m,\aa_1}).$$

For the second claim, assume~$\WW_1,\WW_2\in\mpb$ and assume~$\ell(\WW_1)\geq 5$ and~$\WW_2<\WW_1$.
If we have~$\ell(\WW_2)<\ell(\WW_1)$, then we deduce
$$\ell(\ov{(\WW_2,\aa_1)})<\ell(\WW_1)+1=\ell(\ov{(\WW_1,\aa_1)})\
(\mbox{if}~(\WW_2,\aa_1)\neq 0),$$
and thus we have
$$\ov{(\WW_2,\aa_1)}<\ov{(\WW_1,\aa_1)}.$$
If~$\ell(\WW_2)=\ell(\WW_1)$ holds, then we assume~$\wt(\WW_2)=(\mm,\pb(\WW_2),\bb_1'\wdots\bb_m')$.
By the above reasoning, we have
$$\wt(\ov{(\WW_2,\aa_1)})=(\mm+1,\pb(\WW_2)+1,\bb_1',\ova{\bb_2'\wdots\bb_m',\aa_1}).$$
Since~$\wt(\WW_2)<\wt(\WW_1)$, we immediately obtain~$\wt(\ov{(\WW_2,\aa_1)})<\wt(\ov{(\WW_1,\aa_1)})$.
That is, the lemma holds for the case~$\nn=1$. For~$\nn>1$, since~$\ell(\ov{(\WW_1,\aa)})\geq 5$ and~$\wt(\ov{(\WW_2,\aa_1)})<\wt(\ov{(\WW_1,\aa_1)})$, the lemma follows by induction on~$\nn$.
\end{proof}

We are now ready to define normal~$\SS$-polynomials, which are special polynomials in~$\Id\SS$ playing an important role concerning the word problem for~$\PMP\XX\SS$.
\begin{defi}\label{defi-nsp}
For every~$\SS\subseteq\MP\XX$, we define \nsps\ as follows:

\ITEM1 Every~$\ss$ in~$\SS$ is a \nsp.

\ITEM2 For every~$\ss$ in~$\SS$ such that~$\ell(\bar\ss)\geq 5$, for all~$\aa_1\wdots\aa_n$ in~$\XX$,
the polynomial~$\Lnormed{\ss,\aa_1\wdots\aa_n}$ is also a \nsp.
\end{defi}

Recall that Definition~\ref{defi-prod} offers the multiplication table for~$\mpb$. In particular, for the product~$\Lnormed{\aa_1,\aa_2,\aa_3}\circ\aa$ (which is
 also~$\Lnormed{\aa_1,\aa_2,\aa_3}\cdot\aa$ in~$\MP\XX$)  in Definition~\ref{defi-prod}\ITEM6, there are many cases to consider, and thus, it is very lengthy to discuss the leading monomial of~$\ss\cdot\aa$ in~$\MP\XX$, especially when~$\ell(\ov\ss)\leq 4$. However,  it is very important in general to consider the leading monomial of a polynomial when one deals with the Gr\"{o}bner--Shirshov bases method for metabelian Poisson algebras. This again forces us to avoid discussions and to define normal~$\SS$-polynomials in the way of Definition~\ref{defi-nsp}.

Note  that the leading monomial of a \nsp\ satisfies the following good property.

\begin{lemm}\label{leading-monomial}
Let~$\ss$ be a monic polynomial in~$\SS$ and assume~$\wt(\bar\ss)=(\ell(\bar\ss),\pb(\bar\ss),\bb_1\wdots\bb_m)$. If~$\mm\geq 5$ holds, then for all~$\aa_1\wdots\aa_n$ in~$\XX$,  we
 have  $$\ov{\Lnormed{\ss,\aa_1\wdots\aa_n}}=\ov{\Lnormed{\bar\ss,\aa_1\wdots\aa_n}}$$
and
$$\wt(\ov{\Lnormed{\ss,\aa_1\wdots\aa_n}})=(\ell(\bar\ss)+\nn,\pb(\bar\ss)+\nn,\bb_1, \ova{\bb_2\wdots\bb_m,\aa_1\wdots\aa_n}),$$
in particular, the \nsp~$\Lnormed{\ss,\aa_1\wdots\aa_n}$ is also monic.
\end{lemm}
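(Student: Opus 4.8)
The plan is to reduce the lemma to an application of Lemma~\ref{monomial-order} together with the definition of normal $\SS$-polynomials. First I would observe that, since $\ss$ is monic, we may write $\ss = \bar\ss + \ff$ where every monomial $\WW$ appearing in $\ff$ satisfies $\WW < \bar\ss$. Because $\ell(\bar\ss) = \mm \geq 5$, Lemma~\ref{monomial-order} applies to $\bar\ss$: it tells us both that
\begin{equation*}
\wt(\ov{\Lnormed{\bar\ss,\aa_1\wdots\aa_n}})=(\ell(\bar\ss)+\nn,\pb(\bar\ss)+\nn,\bb_1, \ova{\bb_2\wdots\bb_m,\aa_1\wdots\aa_n})
\end{equation*}
and that $\Lnormed{\bar\ss,\aa_1\wdots\aa_n}$ is monic (this is exactly the monicity statement embedded in the proof of Lemma~\ref{monomial-order}, where the leading monomial is computed with coefficient $1$). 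So the heart of the matter is to show that the lower-order tail $\ff$ cannot interfere with this leading term after right-bracketing by $\aa_1\wdots\aa_n$.

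The key step is then to control $\Lnormed{\ff,\aa_1\wdots\aa_n}$. By linearity, it suffices to bound each monomial $\ov{\Lnormed{\WW,\aa_1\wdots\aa_n}}$ for $\WW < \bar\ss$ a monomial of $\ff$. Here I would split into two cases according to $\ell(\WW)$. If $\ell(\WW) \geq 5$, then the second (``in particular'') clause of Lemma~\ref{monomial-order} directly gives $\ov{\Lnormed{\WW,\aa_1\wdots\aa_n}} < \ov{\Lnormed{\bar\ss,\aa_1\wdots\aa_n}}$ whenever the bracket is nonzero. If $\ell(\WW) < 5$, then I would use the length-additivity of bracketing: either the product is zero (which is harmless), or its leading monomial has length $\ell(\WW) + \nn < \ell(\bar\ss) + \nn$, so by the very first coordinate of $\wt$ it is again strictly smaller than $\ov{\Lnormed{\bar\ss,\aa_1\wdots\aa_n}}$. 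In both cases every surviving monomial of $\Lnormed{\ff,\aa_1\wdots\aa_n}$ is strictly below $\ov{\Lnormed{\bar\ss,\aa_1\wdots\aa_n}}$.

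Combining these, $\Lnormed{\ss,\aa_1\wdots\aa_n} = \Lnormed{\bar\ss,\aa_1\wdots\aa_n} + \Lnormed{\ff,\aa_1\wdots\aa_n}$ has the same leading monomial and leading coefficient as $\Lnormed{\bar\ss,\aa_1\wdots\aa_n}$; this yields simultaneously the identity $\ov{\Lnormed{\ss,\aa_1\wdots\aa_n}} = \ov{\Lnormed{\bar\ss,\aa_1\wdots\aa_n}}$, the asserted formula for $\wt$, and the monicity of $\Lnormed{\ss,\aa_1\wdots\aa_n}$.

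The step I expect to be the main obstacle is the clean handling of the short monomials $\WW$ with $\ell(\WW) \leq 4$, since Lemma~\ref{monomial-order} is stated only under the hypothesis $\ell(\WW_1)\geq 5$ and therefore does not directly apply to them. One must instead argue purely by the length coordinate: that bracketing by $n$ letters from $\XX$ either annihilates the monomial or raises its length by exactly $\nn$, never above $\ell(\bar\ss)+\nn$. This requires a brief check against the cases of Definition~\ref{defi-prod} (items \ITEM4, \ITEM6, \ITEM8 and their degenerate subcases) to confirm that no bracketing operation in $\MP\XX$ can increase length, so that the first coordinate of $\wt$ alone settles the comparison. Once that length monotonicity is recorded, the rest is bookkeeping.
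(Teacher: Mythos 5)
Your proof is correct and follows essentially the same route as the paper, whose entire proof of this lemma is a one-line appeal to Lemma~\ref{monomial-order}: decompose $\ss=\bar\ss+\ff$, apply the displayed $\wt$-formula (and the monicity noted in that lemma's proof) to $\bar\ss$, and dominate every monomial of the tail $\ff$ after bracketing. The only difference is that your case split on $\ell(\WW)$ is unnecessary: the ``in particular'' clause of Lemma~\ref{monomial-order} requires $\ell(\WW_1)\geq 5$ only for the \emph{larger} monomial, so it already covers the short monomials of $\ff$ --- indeed, your length-monotonicity argument for them is exactly how the paper proves that clause in the case $\ell(\WW_2)<\ell(\WW_1)$.
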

\begin{proof}
By Lemma~\ref{monomial-order}, we immediately obtain the result.
\end{proof}
By Lemma~\ref{leading-monomial}, for every~$\ss$ in~$\SS$,
 if~$\ell(\bar\ss)\geq 5$ holds, then we have~$\ov{\Lnormed{\ss,\aa_1\wdots\aa_n}}\geq \ov\ss$.
We are now in a position to introduce the notion of a \gsb\ for a metabelian Poisson algebra. In fact, the way of defining a \gsb\ for an algebra is quite simple and is just a routine job. What is really different is to find a sufficient (and necessary) condition for a set of being a \gsb.
\begin{defi}\label{defi-gsb}
Let~$I$ be an ideal of~$\MP\XX$. Then a set~$\SS\subseteq\MP\XX$ is called a \emph{Gr\"{o}bner--Shirshov basis} (in~$\MP\XX$) for the quotient algebra~$\MP\XX/I$, if~$I=\Id\SS$, and for every nonzero element~$\ff\in I$, there exists a normal~$\SS$-polynomial~$h$ satisfying~${\ffb=\ov{h}}$.
\end{defi}

For every subset~$\SS$ of~$\MP\XX$, we define
$$
\mathsf{Irr}(S):=\{\WW\in \mpb \mid \WW\neq \ov{\hh} \mbox{ for every normal } S\mbox{-polynomial } \hh\}.
$$
We observe that the set~$\mathsf{Irr}(S)$ is a linear generating set of the metabelian Poisson algebra~$\PMP\XX\SS$. More precisely, we have the following lemma:
\begin{lemm}\label{f=irr+n-s-polynomials}
Let~$\SS$ be a monic subset of~$\MP\XX$. Then every polynomial~$\ff$ in~$\MP\XX$ can be written of the form:
$$\ff=\sum\alpha_i\WW_i+
\sum\beta_jh_{j},
$$
where all~$\alpha_i$ and~$\beta_j$ are elements in~$ k$, each~$ \WW_i$ is a monomial in~$\mathsf{Irr}(\SS)$ satisfying~$\WW_i\leq \bar f$, and, each~$\hh_{j}$ is
a normal~$\SS$-polynomial satisfying~$\ov{h_{j}}\leq\bar \ff$. In particular, the set~$\Irr\SS$ is a linear generating set of the algebra~$\PMP\XX\SS$.
\end{lemm}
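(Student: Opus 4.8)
The plan is to argue by Noetherian induction on the leading monomial $\bar\ff$ with respect to the well order $<$ on $\mpb$. By Theorem~\ref{iso} the set $\mpb$ is a linear basis of $\MP\XX$, so every $\ff$ is a finite $\kk$-linear combination of elements of $\mpb$ and $\bar\ff$ is well defined; since $<$ is a well order on $\mpb$, there is no infinite strictly $<$-descending chain, and Noetherian induction on $\bar\ff$ is available. A preliminary observation I would record first is that \emph{every normal $\SS$-polynomial is monic}: an element of $\SS$ is monic by hypothesis, and for $\ss\in\SS$ with $\ell(\bar\ss)\geq5$ the normal $\SS$-polynomial $\Lnormed{\ss,\aa_1\wdots\aa_n}$ is monic by Lemma~\ref{leading-monomial}. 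This is precisely what makes the cancellation in the reduction step exact.

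For the induction, if $\ff=0$ the statement holds with empty sums. Otherwise set $\WW=\bar\ff$ and $\alpha=\lcoe\ff$, and split into two cases according to whether $\WW$ is reducible. In \emph{Case 1}, where $\WW\in\Irr\SS$, I put $\ff'=\ff-\alpha\WW$, so that $\bar{\ff'}<\WW=\bar\ff$; by the induction hypothesis $\ff'$ admits the required expression, in which every monomial from $\Irr\SS$ and every normal $\SS$-polynomial has leading monomial $\leq\bar{\ff'}<\bar\ff$, and adding back the single term $\alpha\WW$ with $\WW\in\Irr\SS$ and $\WW\leq\bar\ff$ yields the desired form for $\ff$. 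In \emph{Case 2}, where $\WW\notin\Irr\SS$, the definition of $\Irr\SS$ provides a normal $\SS$-polynomial $h$ with $\ov{h}=\WW$, which is monic by the preliminary observation; then $\ff'=\ff-\alpha h$ has its top terms cancel exactly, so $\bar{\ff'}<\WW=\bar\ff$, the induction hypothesis applies to $\ff'$, and adding back $\alpha h$ (a scalar multiple of a normal $\SS$-polynomial with $\ov{h}=\bar\ff\leq\bar\ff$) gives the required expression. In both cases every newly produced term satisfies the bound $\leq\bar\ff$, so the induction assembles correctly.

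For the final generating-set claim, I would note that every normal $\SS$-polynomial lies in $\Id\SS$: an element of $\SS$ does, and $\Lnormed{\ss,\aa_1\wdots\aa_n}$ is an iterated Poisson bracket of $\ss\in\SS$ with letters, hence still in the ideal. Reducing the displayed expression modulo $\Id\SS$ therefore gives $\ff\equiv\sum\alpha_i\WW_i$ in $\PMP\XX\SS$, so the images of the elements of $\Irr\SS$ span $\PMP\XX\SS$ over $\kk$; that is, $\Irr\SS$ is a linear generating set.

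The argument is a routine reduction, and the only points needing genuine care are that each step strictly lowers the leading monomial and that every produced term respects the bound $\leq\bar\ff$. The first hinges entirely on the normal $\SS$-polynomial being monic so that the top terms cancel, and the second is bookkeeping; both are secured by Lemma~\ref{leading-monomial} together with the well-foundedness of $<$, so I do not expect any serious obstacle beyond keeping the two cases organized.
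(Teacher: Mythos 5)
Your proposal is correct and follows essentially the same route as the paper: induction on the leading monomial $\bar\ff$, subtracting either $\lcoe\ff\,\bar\ff$ when $\bar\ff\in\Irr\SS$ or $\lcoe\ff\,\hh$ for a (monic, by Lemma~\ref{leading-monomial}) normal $\SS$-polynomial $\hh$ with $\ov\hh=\bar\ff$, with termination guaranteed by the well order. The paper's proof is just a terser version of the same reduction argument, and your added remarks on monicness and on why normal $\SS$-polynomials lie in $\Id\SS$ are accurate elaborations rather than deviations.
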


\begin{proof}
The result follows by induction on~$\bar f $: If~$\ffb=\bar\hh$ for some \nsp, then by Lemma~\ref{leading-monomial}, we have~$\ov{\ff-\lcoe\ff\hh}<\ffb$; if~$\ffb$ lies in~$\Irr\SS$, then~$\ov{\ff-\lcoe\ff\ffb}<\ffb$. Since~$<$ is a well order, the subtraction process terminates in finitely many iterations.
 \end{proof}

Note that it is not practical in general to tell whether a set~$\SS$ is a \gsb\ for~$\PMP\XX\SS$ by using Definition~\ref{defi-gsb}. So, what we need at this point is a family of tractable conditions for recognizing whether a given set is a \gsb\ or not. On one hand, the characteristic~$\chart(\kk)$ of the underlying field~$\kk$ matters, and on the other hand, the way of defining \nsps\ is not unique, so this sort of conditions is also not unique. Finally, since our aim is to show that every finitely presented metabelian Poisson algebra has a solvable word problem, we try to choose a way that avoids discussions as many as possible. We first introduce several specific polynomials in~$\Id\SS$, which is useful in rewriting elements of~$\Id\SS$ into \nsps\ under certain assumptions.

\begin{defi}
For every~$\ss$ in~$\SS\subseteq \MP\XX$, for all~$\aa_1,\aa_2$ and~$\aa$ in~$\XX$ satisfying~$\aa_1>\aa_2$, the polynomials~$\ss\cdot \aa$, $\ss\cdot (\aa_1,\aa_2)$ and~$(\ss,(\aa_1,\aa_2))$ are all called~\emph{multiplication compositions} (of~$\SS$), and, if~$\ell(\bar\ss)\leq 4$ holds, then~$(\ss,\aa)$ is also called a~\emph{multiplication composition} (of~$\SS$). A multiplication composition~$\hh$ is called \emph{trivial} (with respect to~$\SS$) if~$\hh$ can be written as a linear combination of \nsps\ with their leading monomials $\leq \bar\hh$. Finally, if every multiplication composition of~$\SS$ is trivial, then~$\SS$ is called \emph{trivial under multiplication compositions}.
\end{defi}

To make formulas simple, we introduce the following notations.
\begin{defi}
  A polynomial~$\ff$ in~$\Id\SS$ is called \emph{trivial modulo~$\SS$ with respect to~$\WW$}, denoted by
$$
f \equiv 0 \mbox{ modulo } \SS \mbox{ with respect to } \WW,
$$ if
$f=\sum_{i}\alpha_{i}\hh_{i}$ for some normal~$S$-polynomials~$\hh_{i}$'s satisfying~$\overline{\hh_{i}}< \WW$. We say~$\ff\equiv \gg$ modulo~$\SS$ with respect to~$\WW$ if  $\ff-\gg\equiv 0$ modulo~$\SS$ with respect to~$\WW$.
\end{defi}
 The first approach toward deciding whether a set~$\SS$ is a \gsb\ or not, is to decide whether all elements of~$\Id\SS$ can be written as linear combinations of \nsps. One of the basic observation is as follows.

 \begin{lemm}\label{mult-letter}
  Let~$\SS$ be a subset of~$\MP\XX$ that is trivial under multiplication compositions. Then  for all~$\aa_1\wdots\aa_n$ in~$\XX$,
 for every \nsp~$\ff$, the polynomial~$\Lnormed{\ff,\aa_1\wdots\aa_n}$ can be written as a linear combination of \nsps. In particular, for every~$\WW$ in~$\mpb$ satisfying~$\ell(\WW)\geq 5$, if~$\ffb<\WW$ holds, then we have~$\Lnormed{\ff,\aa_1\wdots\aa_n}\equiv 0$ modulo~$\SS$ with respect to~$\ov{\Lnormed{\WW,\aa_1\wdots\aa_n}}$.
 \end{lemm}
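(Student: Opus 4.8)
The plan is to prove, by induction on $n$, the slightly strengthened statement: for every \nsp~$\ff$ and all~$\aa_1\wdots\aa_n$ in~$\XX$, the polynomial~$\Lnormed{\ff,\aa_1\wdots\aa_n}$ is a linear combination of \nsps, each of whose leading monomials has length at most~$\ell(\bar\ff)+n$. The length bound is included precisely so as to yield the ``in particular'' clause at the end. The first step is a reduction. If~$\ell(\bar\ff)\geq5$, then~$\ff$ is either some~$\ss\in\SS$ with~$\ell(\bar\ss)\geq5$ or some~$\Lnormed{\ss,\bb_1\wdots\bb_k}$ with~$\ell(\bar\ss)\geq5$, and in both cases~$\Lnormed{\ff,\aa_1\wdots\aa_n}$ is \emph{already} a \nsp~by Definition~\ref{defi-nsp} (absorbing the previously appended letters), with leading monomial~$\ov{\Lnormed{\bar\ff,\aa_1\wdots\aa_n}}$ of length~$\ell(\bar\ff)+n$ by Lemma~\ref{leading-monomial}; so the strengthened statement holds trivially in this regime. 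Conversely, a \nsp~whose leading monomial has length at most~$4$ must be an element of~$\SS$. Hence the only case needing work is~$\ff=\ss\in\SS$ with~$\ell(\bar\ss)\leq4$.

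For this case I would induct on~$n$, the case~$n=0$ being trivial. For~$n\geq1$, since~$\ell(\bar\ss)\leq4$ the bracket~$(\ss,\aa_1)$ is a multiplication composition, so the hypothesis that~$\SS$ is trivial under multiplication compositions gives~$(\ss,\aa_1)=\sum_i\alpha_i\hh_i$ for \nsps~$\hh_i$ with~$\ov{\hh_i}\leq\ov{(\ss,\aa_1)}$. Reading off Definition~\ref{defi-prod}, the Poisson bracket of a basis monomial with a single letter is a combination of basis monomials whose length is exactly one greater; thus every monomial of~$(\ss,\aa_1)$ has length at most~$\ell(\bar\ss)+1$, and in particular~$\ell(\ov{\hh_i})\leq\ell(\bar\ss)+1\leq5$. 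Now~$\Lnormed{\ss,\aa_1\wdots\aa_n}=\sum_i\alpha_i\Lnormed{\hh_i,\aa_2\wdots\aa_n}$, and applying the inductive hypothesis to each \nsp~$\hh_i$ rewrites every summand, hence~$\Lnormed{\ss,\aa_1\wdots\aa_n}$ itself, as a linear combination of \nsps~whose leading monomials have length at most~$\ell(\ov{\hh_i})+(n-1)\leq\ell(\bar\ss)+n$. This closes the induction and establishes the main assertion.

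It remains to extract the ``in particular'' clause. Fix~$\WW\in\mpb$ with~$\ell(\WW)\geq5$ and~$\bar\ff<\WW$; by Lemma~\ref{leading-monomial} the monomial~$\ov{\Lnormed{\WW,\aa_1\wdots\aa_n}}$ has length~$\ell(\WW)+n\geq n+5$. If~$\ell(\bar\ff)\leq4$, the length bound just proved shows that every \nsp~occurring in the expansion of~$\Lnormed{\ff,\aa_1\wdots\aa_n}$ has leading monomial of length at most~$\ell(\bar\ff)+n\leq n+4$, hence strictly below~$\ov{\Lnormed{\WW,\aa_1\wdots\aa_n}}$ because~$\wt$ compares length first. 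If instead~$\ell(\bar\ff)\geq5$, then~$\Lnormed{\ff,\aa_1\wdots\aa_n}$ is a single \nsp~with leading monomial~$\ov{\Lnormed{\bar\ff,\aa_1\wdots\aa_n}}$, which is strictly below~$\ov{\Lnormed{\WW,\aa_1\wdots\aa_n}}$ by Lemma~\ref{monomial-order} applied with~$\WW_1=\WW$ and~$\WW_2=\bar\ff$. Either way~$\Lnormed{\ff,\aa_1\wdots\aa_n}\equiv0$ modulo~$\SS$ with respect to~$\ov{\Lnormed{\WW,\aa_1\wdots\aa_n}}$.

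The step that demands care is the leading-monomial bookkeeping when~$\ell(\bar\ss)\leq4$. Lemma~\ref{leading-monomial} guarantees the clean identity~$\ov{\Lnormed{\ss,\aa_1\wdots\aa_n}}=\ov{\Lnormed{\bar\ss,\aa_1\wdots\aa_n}}$ only for~$\ell(\bar\ss)\geq5$, so for a short~$\ss$ the leading term of~$(\ss,\aa)$ may be created by a \emph{non}-leading monomial of~$\ss$ and cannot be followed multiplicatively; this is exactly where a naive induction on leading monomials breaks down. The remedy, and the reason the whole argument succeeds, is to track the coarser length statistic: since each bracket raises length by exactly one, everything manufactured from a short~$\ss$ stays at length at most~$n+4$, comfortably below the length~$n+5$ of the target monomial~$\ov{\Lnormed{\WW,\aa_1\wdots\aa_n}}$.
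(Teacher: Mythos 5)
Your proposal is correct and takes essentially the same approach as the paper's proof: the same case split (for a \nsp~$\ff$ built on~$\ss\in\SS$ with~$\ell(\ov\ss)\geq 5$, the polynomial~$\Lnormed{\ff,\aa_1\wdots\aa_n}$ is already a \nsp; for~$\ff=\ss$ with~$\ell(\ov\ss)\leq 4$, induct on~$\nn$ using triviality of the multiplication composition~$(\ss,\aa_1)$). Your strengthened induction hypothesis with the explicit length bound~$\ell(\ffb)+\nn$ simply spells out the bookkeeping that the paper compresses into the inequality~$\ov{\ff_i}\leq\ov{(\ss,\aa_1)}<\ov{(\WW,\aa_1)}$ and the phrase ``by the above reasoning and by induction hypothesis''.
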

 \begin{proof}
If~$\ff=\Lnormed{\ss,\bb_1\wdots\bb_m}$ for some element~$\ss$ in~$\SS$
satisfying~$\ell(\bar\ss)\geq 5$ and~$\mm\geq 0$, then
$$
\Lnormed{\ff,\aa_1\wdots\aa_n}
=\Lnormed{\ss,\bb_1\wdots\bb_m,\aa_1\wdots\aa_n}
$$
 is a \nsp. By Lemmas~\ref{monomial-order} and~\ref{leading-monomial}, we
deduce~$\ov{\Lnormed{\ff,\aa_1\wdots\aa_n}}<\ov{\Lnormed{\WW,\aa_1\wdots\aa_n}}$.

Otherwise, assume~$\ff=\ss$ and assume~$\ell(\ov\ss)\leq 4$. We use induction on~$\nn$. For~$\nn=1$, since~$\SS$ is trivial under multiplication compositions, we can assume that~$(\ss,\aa_1)=\sum_{i}\alpha_i\ff_i$, where each~$\ff_i$ is a \nsp\ such
 that~$\ov{\ff_i}\leq\ov{(\ss,\aa_1)}<\ov{(\WW,\aa)}$. For~$\nn\geq 1$, by the above reasoning and by induction hypothesis, we obtain the result.
 \end{proof}
%\NEW
Before going further, we note that in a metabelian Poisson algebra, we have the following formulas.
\begin{lemm}\label{moving-letter}
 Let~$\mathcal{P}$ be a metabelian Poisson algebra. Then for all~$\xx_1\wdots\xx_n$, $\xx$ and~$\yy$ in~$\mathcal{P}$, we have the following identities:

 \ITEM1 If~$\nn\geq 2$, then we have~$(\xx,\Lnormed{\xx_1\wdots\xx_n})=\Lnormed{\xx,(\xx_1,\xx_2),\xx_3\wdots\xx_n}$.

\ITEM2 For every~$\nn\geq 1$, we have~$\Lnormed{\xx,\xx_1\wdots\xx_{n}}\cdot \yy=\Lnormed{\xx\cdot \yy,\xx_1\wdots\xx_{n}}+\Lnormed{\xx\cdot(\xx_1,\yy),\xx_2\wdots\xx_{n}}.$
 \end{lemm}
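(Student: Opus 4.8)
The plan is to prove both identities by induction on $\nn$, drawing only on three ingredients: the Leibniz identity~\eqref{leibniz}, the anticommutativity and the derivation (Jacobi) form of the Poisson bracket, and the metabelian vanishing relations $\mathcal{P}^2\cdot\mathcal{P}^2=(\mathcal{P}^2,\mathcal{P}^2)=0$ furnished by~\eqref{metabelian-poisson-def}. The recurring mechanism is that whenever a bracket of length at least two meets another element of~$\mathcal{P}^2$ under either operation, the result lands in $\mathcal{P}^2\cdot\mathcal{P}^2$ or $(\mathcal{P}^2,\mathcal{P}^2)$ and therefore vanishes; pinning down exactly these cross terms is the entire content of the argument.

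For \ITEM1 the base case $\nn=2$ is the tautology $(\xx,(\xx_1,\xx_2))=\Lnormed{\xx,(\xx_1,\xx_2)}$. For $\nn\geq3$ I would write $\Lnormed{\xx_1\wdots\xx_n}=(\uu,\xx_n)$ with $\uu=\Lnormed{\xx_1\wdots\xx_{n-1}}$ and expand via the derivation form of the Jacobi identity,
$$(\xx,(\uu,\xx_n))=((\xx,\uu),\xx_n)+(\uu,(\xx,\xx_n)).$$
Since $\nn-1\geq2$, the element $\uu$ is a genuine bracket and hence lies in $\mathcal{P}^2$, while $(\xx,\xx_n)\in\mathcal{P}^2$ as well, so $(\uu,(\xx,\xx_n))\in(\mathcal{P}^2,\mathcal{P}^2)=0$. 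The induction hypothesis rewrites $(\xx,\uu)$ as $\Lnormed{\xx,(\xx_1,\xx_2),\xx_3\wdots\xx_{n-1}}$, and bracketing once more with $\xx_n$ gives the claim.

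For \ITEM2 the base case $\nn=1$ is a plain Leibniz computation: from $(\xx\cdot\yy,\xx_1)=-(\xx_1,\xx\cdot\yy)=(\xx,\xx_1)\cdot\yy-\xx\cdot(\xx_1,\yy)$ one sees that the two right-hand terms sum to $(\xx,\xx_1)\cdot\yy=\Lnormed{\xx,\xx_1}\cdot\yy$. For the step $\nn\geq2$ I would put $\ww=\Lnormed{\xx,\xx_1\wdots\xx_{n-1}}$, so that $\Lnormed{\xx,\xx_1\wdots\xx_n}=(\ww,\xx_n)$, and combine Leibniz with anticommutativity to get
$$(\ww,\xx_n)\cdot\yy=(\ww\cdot\yy,\xx_n)+\ww\cdot(\xx_n,\yy).$$
Here $\nn\geq2$ forces $\ww$ to be a bracket of length at least two, so $\ww\in\mathcal{P}^2$ and $(\xx_n,\yy)\in\mathcal{P}^2$, whence $\ww\cdot(\xx_n,\yy)\in\mathcal{P}^2\cdot\mathcal{P}^2=0$. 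Applying the induction hypothesis to $\ww\cdot\yy$ and then bracketing the two resulting terms with $\xx_n$ recovers precisely $\Lnormed{\xx\cdot\yy,\xx_1\wdots\xx_n}+\Lnormed{\xx\cdot(\xx_1,\yy),\xx_2\wdots\xx_n}$.

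Both proofs are ultimately bookkeeping; the one genuine point to watch — the \emph{main obstacle}, such as it is — is to guarantee that each discarded cross term truly lies in $\mathcal{P}^2\cdot\mathcal{P}^2$ or $(\mathcal{P}^2,\mathcal{P}^2)$. This requirement fails exactly when the auxiliary element ($\uu$ in \ITEM1, $\ww$ in \ITEM2) is still a single generator rather than an honest bracket, which is precisely why the base cases $\nn=2$ and $\nn=1$ must be separated out and verified by direct computation rather than folded into the inductive step.
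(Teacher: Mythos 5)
Your proposal is correct and is essentially the paper's own proof: part (2) is verified by exactly the same induction (Leibniz base case, the cross term $\Lnormed{\xx,\xx_1\wdots\xx_{n-1}}\cdot(\xx_n,\yy)$ killed by $\mathcal{P}^2\cdot\mathcal{P}^2=0$, then the induction hypothesis applied to $\Lnormed{\xx,\xx_1\wdots\xx_{n-1}}\cdot\yy$). For part (1) the paper argues directly, writing $(\xx,\Lnormed{\xx_1\wdots\xx_n})=-(\Lnormed{\xx_1\wdots\xx_n},\xx)$ and moving $\xx$ inward via Eq.~\eqref{lie-commu}, whereas you run an induction on the derivation form of the Jacobi identity together with $(\mathcal{P}^2,\mathcal{P}^2)=0$; since Eq.~\eqref{lie-commu} is itself derived from precisely those two facts, this is the same mechanism in inductive packaging rather than a genuinely different route.
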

 \begin{proof}
   \ITEM1 For~$\nn=2$, there is nothing to prove. Assume~$\nn>2$. Then by Eq.~\eqref{lie-commu},
   we deduce~
   $$
   (\xx,\Lnormed{\xx_1\wdots\xx_n})
   =-(\Lnormed{\xx_1\wdots\xx_n},\xx)
   =-\Lnormed{(\xx_1,\xx_2),\xx,\xx_3\wdots\xx_n}
   =\Lnormed{\xx,(\xx_1,\xx_2),\xx_3\wdots\xx_n}.
   $$

   \ITEM2 We use induction on~$\nn$. For~$\nn=1$, we have
   $$(\xx,\xx_1)\cdot \yy
   =(\xx\cdot \yy, \xx_1)-\xx\cdot (\yy,\xx_1)
   =(\xx\cdot \yy, \xx_1)+\xx\cdot (\xx_1,\yy).$$
   Assume~$\nn\geq 2$. Then by induction hypothesis, we obtain
   \begin{align*}
     &\Lnormed{\xx,\xx_1\wdots\xx_{n}}\cdot \yy
     =(\Lnormed{\xx,\xx_1\wdots\xx_{n-1}}\cdot \yy,\xx_n)
     +\Lnormed{\xx,\xx_1\wdots\xx_{n-1}}\cdot (\xx_n,\yy)&\\
     =&(\Lnormed{\xx,\xx_1\wdots\xx_{n-1}}\cdot \yy,\xx_n)
     = \Lnormed{\xx\cdot \yy,\xx_1\wdots\xx_{n}}+\Lnormed{\xx\cdot(\xx_1,\yy),\xx_2\wdots\xx_{n}}.&
   \end{align*}
   The proof is completed.
 \end{proof}

%\WEN

We are now ready to claim the condition under which the set of all the \nsps\ forms a linear generating set for~$\Id\SS$.

 \begin{lemm}\label{as-nsp}
Let~$\SS$ be a subset of~$\MP\XX$. If~$\SS$ is trivial under multiplication compositions, then every polynomial in~$\Id\SS$ can be written as a linear combination of \nsps.
 \end{lemm}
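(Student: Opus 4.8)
The plan is to introduce $N$, the $\kk$-linear span of all \nsps, and to prove the stronger statement that $N$ is an ideal of $\MP\XX$. Every \nsp\ lies in $\Id\SS$, so $N\subseteq\Id\SS$; and $\SS\subseteq N$ by Definition~\ref{defi-nsp}\ITEM1. Hence, once $N$ is known to be an ideal, minimality of $\Id\SS$ forces $\Id\SS\subseteq N$, which is exactly the claim (in fact $\Id\SS=N$). Since $\cdot$ is commutative and $(-,-)$ is anti-commutative, it suffices to show $N\cdot\gg\subseteq N$ and $(N,\gg)\subseteq N$ for all $\gg\in\MP\XX$, and as $\mpb$ is a linear basis it is enough to take $\gg=\WW\in\mpb$.

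I first treat a single generator $\WW=\aa\in\XX$. Closure under the bracket, $(N,\aa)\subseteq N$, is the content of Lemma~\ref{mult-letter} in the case $\nn=1$: bracketing a \nsp\ of the form $\Lnormed{\ss,\bb_1\wdots\bb_k}$ with $\ell(\bar\ss)\geq5$ merely lengthens the left-normed word, whereas $(\ss,\aa)$ with $\ell(\bar\ss)\leq4$ is a multiplication composition and hence trivial. For $N\cdot\aa$, a generator $\ss\in\SS$ yields the multiplication composition $\ss\cdot\aa$; for a longer \nsp\ I use Lemma~\ref{moving-letter}\ITEM2 to write
$$\Lnormed{\ss,\bb_1\wdots\bb_k}\cdot\aa=\Lnormed{\ss\cdot\aa,\bb_1\wdots\bb_k}+\Lnormed{\ss\cdot(\bb_1,\aa),\bb_2\wdots\bb_k},$$
then rewrite $\ss\cdot\aa$ and $\ss\cdot(\bb_1,\aa)$ — both multiplication compositions up to sign, since $(\bb_1,\aa)$ is, up to sign, a bracket of two letters with the larger first — as linear combinations of \nsps, and finally apply Lemma~\ref{mult-letter} to the resulting left-normed brackets. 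Both summands land in $N$.

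Next I pass to an arbitrary $\WW\in\mpb$ by induction on $\ell(\WW)$, the base case $\ell(\WW)=1$ being the previous paragraph; the reductions rest on the Poisson axioms together with the metabelian identities~\eqref{length-4}. For $(N,\WW)$: if $\WW$ is a product of letters, the Leibniz identity turns $(\ff,\WW)$ into a sum of terms $(\ff,\aa_i)$ multiplied by letters, which lie in $N$ by the generator case; if $\WW=\Lnormed{\aa_1\wdots\aa_m}$, then Lemma~\ref{moving-letter}\ITEM1 gives $(\ff,\WW)=\Lnormed{\ff,(\aa_1,\aa_2),\aa_3\wdots\aa_m}$, where $(\ff,(\aa_1,\aa_2))=((\ff,\aa_1),\aa_2)-((\ff,\aa_2),\aa_1)\in N$ by the Jacobi identity and the generator case, after which the trailing letters $\aa_3\wdots\aa_m$ are absorbed using the generator case again; the mixed basis monomials in $\YYY_3,\YYY_4,\YYY_5$ are handled by Leibniz, peeling off the last factor and invoking the induction hypothesis. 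For $N\cdot\WW$: products of letters reduce to iterated multiplication by letters via associativity, a mixed monomial $\BB\cdot\aa$ reduces to $(\ff\cdot \BB)\cdot\aa$ by the induction hypothesis applied to the shorter bracket $\BB$, and for a pure bracket $\WW=\Lnormed{\aa_1\wdots\aa_m}$ every \nsp\ that is itself a Poisson bracket multiplies $\WW$ to $0$ by~\eqref{length-4}, while the sole remaining product $\ss\cdot\Lnormed{\aa_1\wdots\aa_m}$ is opened by Lemma~\ref{moving-letter}\ITEM2 and reduced, exactly as in the previous paragraph, to multiplication compositions and Lemma~\ref{mult-letter}.

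The main obstacle is precisely this last situation: a \nsp\ equal to some $\ss\in\SS$, which need not be a Poisson bracket, multiplied by a bracket $\Lnormed{\aa_1\wdots\aa_m}$ of length at least $3$. Here the metabelian identities give nothing directly, and the only available leverage is Lemma~\ref{moving-letter}\ITEM2, which trades this product for the compositions $\ss\cdot\aa_1$ and $(\ss,\aa_2)$ controlled by the hypothesis (the latter being a \nsp\ when $\ell(\bar\ss)\geq5$). Keeping the bookkeeping of leading monomials consistent throughout — so that Lemma~\ref{mult-letter} applies and every intermediate expression provably stays in $N$ — is the technical heart of the argument, and is exactly what dictates the particular list of multiplication compositions fixed earlier.
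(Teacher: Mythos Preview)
Your proof is correct and follows essentially the same approach as the paper: both arguments amount to showing that the linear span of \nsps\ is closed under $(\,\cdot\,,W)$ and $\cdot\,W$, and both hinge on Lemma~\ref{mult-letter} together with Lemma~\ref{moving-letter} to reduce the problematic cases to multiplication compositions. The only organizational difference is that the paper observes at the outset, via the Leibniz identity, that it suffices to take $W=\Lnormed{\aa_1\wdots\aa_n}$ a pure left-normed bracket, whereas you run an explicit induction on~$\ell(W)$ over all of~$\mpb$; your final paragraph's worry about ``bookkeeping of leading monomials'' is unnecessary here, since only the first conclusion of Lemma~\ref{mult-letter} is needed for this statement.
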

\begin{proof}
  Since every polynomial in~$\SS$ is a normal~$\SS$ polynomial, by the Leibniz identity, it is enough to show that, for every \nsp~$\ff$, for every~${\WW=\Lnormed{\aa_1\wdots\aa_n}}$ in~$\mpb$ satisfying~$\nn\geq 1$, both~$(\ff,\WW)$ and~$\ff\cdot\WW$ can be written as linear combinations of \nsps.

  We first assume~$\ff=\ss$. If~$\nn=1$ and~$\ell(\bar\ss)\geq 5$, then~$(\ss,\aa_1)$ is already a \nsp, and~$\ss\cdot\aa$ is a multiplication composition which by assumption can be written as a linear combination of \nsps. If~$\nn=1$ and~$\ell(\bar\ss)\leq 4$, or, if~$\nn=2$, then both~$(\ss,\WW)$ and~$\ss\cdot\WW$ are just multiplication compositions, and thus, they can be written as linear combinations of \nsps.
  Finally, assume~$\nn\geq3$, then by Lemma~\ref{moving-letter}, we obtain
  $$
  (\ss,\Lnormed{\aa_1\wdots\aa_n})=\Lnormed{\ss,(\aa_1,\aa_2), \aa_3\wdots\aa_n}
  $$
  and
$$
\ss\cdot\Lnormed{\aa_1\wdots\aa_n}
=\Lnormed{\aa_1\cdot \ss,\aa_2\wdots\aa_n}+\Lnormed{\aa_1\cdot(\aa_2,\ss),\aa_3\wdots\aa_n}
=\Lnormed{\ss\cdot (\aa_1,\aa_2),\aa_3\wdots\aa_n}.
$$
Since both~$(\ss,(\aa_1,\aa_2))$ and~$\ss\cdot(\aa_1,\aa_2)$ are multiplication compositions, by assumption and by Lemma~\ref{mult-letter}, we know that both~$(\ss,\WW)$ and~$\ss\cdot\WW$ can be written as linear combinations of \nsps.

Now we assume~$\ff=\Lnormed{\ss,\bb_1\wdots\bb_m}$ for some letters~$\bb_1\wdots\bb_m$ in~$\XX$ and assume~$\mm\geq 1$. Then we have~$\ff\cdot\WW=(\ff,\WW)=0$ if~$\ell(\WW)\geq 2$. So we may assume~$\WW=\aa\in\XX$, and thus~$
(\Lnormed{\ss,\bb_1\wdots\bb_m},\WW)$ is already a normal~$\SS$-polynomial.
Moreover, we have
$$
\Lnormed{\ss,\bb_1\wdots\bb_m}\cdot\WW=
\Lnormed{\ss\cdot \WW,\bb_1\wdots\bb_m}+\Lnormed{\ss\cdot(\bb_1,\WW),\bb_2\wdots\bb_m}.
$$
By Lemma~\ref{mult-letter} again, the lemma follows.
\end{proof}

\subsection{Composition-Diamond lemma for metabelian Poisson algebras}
Our aim in this subsection is to prove a Composition-Diamond lemma for metabelian Poisson algebras, which offers several equivalent conditions of~$\SS$ being a \gsb\ for~$\PMP\XX\SS$. In the previous subsection, we find a condition under which the set of~\nsps\ forms a linear generating set of~$\Id\SS$. Our next step is to find a condition under which the difference of two \nsps\ with the same leading monomial is not essential in the sense of Lemma~\ref{key-lemma}.

Before going there, we observe that, if~$\SS$ is trivial under multiplication compositions, then the difference of the \nsps~$\Lnormed{\ss,\aa_1\wdots\aa_n}$ and~$\Lnormed{\ss,\ova{\aa_1\wdots\aa_n}}$ is not essential in the following sense.

\begin{lemm}
Let~$\SS$ be a subset of~$\MP\XX$ that is trivial under multiplication compositions. For every~$\ss$ in~$\SS$, if~$\ell(\bar\ss)\geq 5$ holds,  then for all~$\aa_1\wdots\aa_n$ in~$\XX$,
we have
$$\Lnormed{\ss,\aa_1\wdots\aa_n}
-\Lnormed{\ss,\ova{\aa_1\wdots\aa_n}}\equiv 0$$ modulo~$\SS$ with respect to~$\ov{\Lnormed{\ss,\aa_1\wdots\aa_n}}$.
\end{lemm}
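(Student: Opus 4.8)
The plan is to realise the passage from $\Lnormed{\ss,\aa_1\wdots\aa_n}$ to $\Lnormed{\ss,\ova{\aa_1\wdots\aa_n}}$ as a sequence of adjacent transpositions of the letters, and to control each swap separately. The decisive distinction is between swaps that occur strictly to the right of $\ss$ (positions $2,\ldots,n$), which I claim cost nothing, and a single swap of the letter sitting immediately after $\ss$ (position $1$), which is the only source of a correction term.

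For a swap at positions $i,i+1$ with $i\geq 2$, I would set $u=\Lnormed{\ss,\aa_1\wdots\aa_{i-1}}$ and note that $u$ is a Poisson bracket, hence $u\in\mathcal{P}^2$; therefore $(u,(\aa_i,\aa_{i+1}))=0$ by the metabelian identity \eqref{metabelian-poisson-def} (equivalently \eqref{length-4}), and Jacobi reduces to $((u,\aa_i),\aa_{i+1})=((u,\aa_{i+1}),\aa_i)$, which is precisely \eqref{lie-commu}. Thus such a swap leaves $\Lnormed{\ss,\aa_1\wdots\aa_n}$ unchanged, and since the transpositions $(i,i+1)$ with $i\geq2$ generate all permutations of the positions $2,\ldots,n$, I may freely sort the tail. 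In particular, if $\aa_1$ is already minimal among $\aa_1\wdots\aa_n$, then $\Lnormed{\ss,\aa_1\wdots\aa_n}=\Lnormed{\ss,\aa_1,\ova{\aa_2\wdots\aa_n}}=\Lnormed{\ss,\ova{\aa_1\wdots\aa_n}}$ and there is nothing to prove.

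If $\aa_1$ is not minimal, I would first use free tail-swaps to move a minimal letter $\aa_{i_0}$ into position $2$, then perform the single position-$1$ swap. Here $u=\ss$ need not lie in $\mathcal{P}^2$, so Jacobi only yields
\begin{equation*}
\Lnormed{\ss,\aa_1,\aa_{i_0},\cc_1\wdots\cc_{n-2}}
-\Lnormed{\ss,\aa_{i_0},\aa_1,\cc_1\wdots\cc_{n-2}}
=\Lnormed{(\ss,(\aa_1,\aa_{i_0})),\cc_1\wdots\cc_{n-2}},
\end{equation*}
where $\cc_1\wdots\cc_{n-2}$ is the sorted remainder. After the swap $\aa_{i_0}$ is minimal and sits in position $1$, so a final round of free tail-swaps converts the left-hand bracket into $\Lnormed{\ss,\ova{\aa_1\wdots\aa_n}}$. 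Hence the entire difference equals the single correction $\Lnormed{(\ss,(\aa_1,\aa_{i_0})),\cc_1\wdots\cc_{n-2}}$, and it remains to show this is $\equiv0$ modulo $\SS$ with respect to $\ov{\Lnormed{\ss,\aa_1\wdots\aa_n}}$.

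This last point is the crux, and also where the main obstacle lies. Because $\ss$ may carry terms of length below $\ell(\bar\ss)$ and need not lie in $\mathcal{P}^2$, one cannot simply argue that the correction vanishes. Instead I would observe that every monomial of $\ss$ of length $\geq2$ lies in $\mathcal{P}^2$ and is therefore killed upon bracketing with $(\aa_1,\aa_{i_0})\in\mathcal{P}^2$; only the length-one part of $\ss$ survives, so $\ell(\ov{(\ss,(\aa_1,\aa_{i_0}))})\leq 3$. Since $\aa_1>\aa_{i_0}$, the polynomial $(\ss,(\aa_1,\aa_{i_0}))$ is a multiplication composition of $\SS$, so by triviality it is a combination of \nsps~$\gg_k$ with $\ov{\gg_k}\leq\ov{(\ss,(\aa_1,\aa_{i_0}))}$, whence $\ell(\ov{\gg_k})\leq3<5\leq\ell(\bar\ss)$ and $\ov{\gg_k}<\bar\ss$. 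Feeding each $\Lnormed{\gg_k,\cc_1\wdots\cc_{n-2}}$ into Lemma~\ref{mult-letter} with $\WW=\bar\ss$ then expresses the correction as a combination of \nsps\ whose leading monomials are bounded by $\ov{\Lnormed{\bar\ss,\cc_1\wdots\cc_{n-2}}}$, of length $\ell(\bar\ss)+(n-2)<\ell(\bar\ss)+n=\ell(\ov{\Lnormed{\ss,\aa_1\wdots\aa_n}})$, hence strictly below $\ov{\Lnormed{\ss,\aa_1\wdots\aa_n}}$. This is exactly the required triviality.
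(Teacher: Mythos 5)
Your proof is correct and follows essentially the same route as the paper's: free permutation of the letters in positions $\geq 2$ (the paper cites Eq.~\eqref{lie-commu}, which you rederive from the metabelian identity), a single Jacobi swap at position~$1$ producing the correction $\Lnormed{(\ss,(\aa_1,\aa_{i_0})),\cc_1\wdots\cc_{n-2}}$, the observation that $(\ss,(\aa_1,\aa_{i_0}))$ is a multiplication composition whose leading monomial has length at most~$3$ because only the linear part of $\ss$ survives, and finally Lemma~\ref{mult-letter} to absorb the remaining letters below the required bound. The only cosmetic difference is that you invoke Lemma~\ref{mult-letter} with $\WW=\bar\ss$ and conclude by a length comparison, whereas the paper takes $\WW=\ov{\Lnormed{\ss,\aa_1,\aa_2}}$ so that the bound $\ov{\Lnormed{\ss,\aa_1\wdots\aa_n}}$ comes out exactly; both are valid.
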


\begin{proof}
By Eq.~\eqref{lie-commu}, we have~$\Lnormed{\ss,\aa_1\wdots\aa_n}
=\Lnormed{\ss,\aa_1,\ova{\aa_2,\wdots\aa_n}}$. So we assume~$\aa_2\leq \pdots\leq\aa_n$. If~$\aa_1\leq \aa_2$, then there is nothing to prove. If~$\aa_1>\aa_2$, then we obtain
$$
\Lnormed{\ss,\aa_1\wdots\aa_n}-\Lnormed{\ss,\aa_2,\aa_1,\aa_3\wdots\aa_n}
=\Lnormed{\ss,(\aa_1,\aa_2),\aa_3\wdots\aa_n}.
$$
Since~$\SS$ is trivial under multiplication compositions, we obtain~$(\ss,(\aa_1,\aa_2))=\sum_{i}\alpha_i\ff_i$, where each~$\ff_i$ is a \nsp\ satisfying~$\bar{\ff_i}\leq \ov{(\ss,(\aa_1,\aa_2))}$. Since we have~$(\WW_1,\WW_2)=0$ for all~$\WW_1$ and~$\WW_2$ in~$\mpb$ satisfying~$\ell(\WW_1)\geq2$ and~$\ell(\WW_2)\geq 2$, if~$(\ss,(\aa_1,\aa_2))\neq 0$, then we deduce~$\ell(\ov{(\ss,(\aa_1,\aa_2))})=3$. In particular, we obtain~$\bar{\ff_i}< \ov{\Lnormed{\ss,\aa_1,\aa_2}}$. Finally, by Lemma~\ref{mult-letter}, the lemma follows.
\end{proof}

 Now we are ready to introduce general compositions (Definition~\ref{defi-composition}), which are special polynomials in the ideal~$\Id\SS$ playing an important role in deciding whether the set~$\SS$ of relations is a \gsb\ for~$\PMP\XX\SS$ or not.

 We first recall the notion of the least common multiple of two elements in the free commutative monoid~$[\XX]$ (freely) generated by a set~$\XX$ (the multiple is not in~$\MP\XX$ in general). For all~$\WW_1,\WW_2$ and~$\WW_3$ in the free commutative monoid~$[\XX]$, if~$\WW_1\cdot\WW_2=\WW_3$, then~$\WW_3$ is called a multiple of~$\WW_1$ and~$\WW_2$ in~$[\XX]$. For~$\aa_1...\aa_n,\bb_1...\bb_m\in[\XX]$, where each~$\aa_i,\bb_j$ lies in~$\XX$, if
$$\aa_1...\aa_n\aa_1'...\aa_p'=\bb_1...\bb_m\bb_1'...\bb_q'$$ in~$[\XX]$ for some letters~$\aa_1'\wdots\aa_p',\bb_1'\wdots\bb_q'$  in~$\XX$ ($\pp,\qq\geq 0$), and none of the letters~$\aa_1'\wdots\aa_p'$ appears in the letters~$\bb_1'\wdots\bb_q'$, then
the monomial~$\aa_1...\aa_n\aa_1'...\aa_p'$ is called the least common multiple of~$\aa_1...\aa_n$ and~$\bb_1...\bb_m$.

\begin{defi}\label{defi-composition}
 Let~$\SS$ be a monic subset of~$ \MP\XX$. For all~$\ss_1$ and~$\ss_2$ in~$\SS$, we define \emph{general compositions} as follows:

 \ITEM1 If we have~$\ov{\ss_1}=\ov{\ss_2}$, then the polynomial~$\ss_1-\ss_2$ is a \emph{general composition} of~$\SS$ with respect to~$\ov{\ss_1}$.

 \ITEM2 If~$\ov{\ss_1}=\Lnormed{\aa,\aa_1\wdots\aa_n}$
 and~$\ov{\ss_2}=\Lnormed{\aa,\bb_1\wdots\bb_m}$ for some letters~$\aa_1\wdots\aa_n,\bb_1\wdots\bb_m$ in~$\XX$ satisfying~$\nn\geq4$ and~$\mm\geq4$, assuming that~$\aa_1...\aa_n\aa_1'...\aa_p'=\bb_1...\bb_m\bb_1'...\bb_q'$ is the least common multiple of~$\aa_1...\aa_n$ and~$\bb_1...\bb_m$ in the free commutative monoid generated by~$\XX$, where~$\aa_1'\wdots\aa_p',\bb_1'\wdots\bb_q'$ are letters in~$\XX$, then
 $$\Lnormed{\ss_1,\ova{\aa_1'\wdots\aa_p'}}
 -\Lnormed{\ss_2,\ova{\bb_1'\wdots\bb_q'}}$$ is a \emph{general composition} of~$\SS$ with respect to~$\ov{\Lnormed{\ss_1,\aa_1'\wdots\aa_p'}}$.

 \ITEM3 If~$\ov{\ss_1}=\Lnormed{\aa,\aa_1\wdots\aa_{n-1}}\cdot\aa_n$
 and~$\ov{\ss_2}=\Lnormed{\aa,\bb_1\wdots\bb_{m-1}}\cdot \bb_m$ for~$\aa_1\wdots\aa_n,\bb_1\wdots\bb_m$ in~$\XX$ satisfying~$\nn\geq4$ and~$\mm\geq4$, assuming that~$\aa_1...\aa_n\aa_1'...\aa_p'
 =\bb_1...\bb_m\bb_1'...\bb_q'$ is the least common
 multiple of~$\aa_1...\aa_n$ and~$\bb_1...\bb_m$ in the free commutative monoid generated by~$\XX$, where~$\aa_1'\wdots\aa_p',\bb_1'\wdots\bb_q'$ are letters in~$\XX$, then $$\Lnormed{\ss_1,\ova{\aa_1'\wdots\aa_p'}}
 -\Lnormed{\ss_2,\ova{\bb_1'\wdots\bb_q'}}$$  is a \emph{general composition} of~$\SS$ with respect to~$\ov{\Lnormed{\ss_1,\aa_1'\wdots\aa_p'}}$.

 If for every~$\WW$ in~$\mpb$, for every composition~$\ff$ of~$\SS$ with respect to~$\WW$ (if any), we have~$\ff=\sum_i\alpha_i\ff_i$ for some \nsps~$\ff_\ii$'s satisfying~$\bar{\ff_i}<\WW$, then~$\SS$ is called \emph{trivial under general compositions}.
\end{defi}
What we simply call ``compositions'' in the sequel are multiplication compositions and general compositions. Moreover, for a general composition~$\hh$ with respect to~$\WW$, we said that the composition~$\hh$ is trivial (with respect to~$\SS$) if~$\hh\equiv 0$ modulo~$\SS$ with respect to~$\WW$.
\begin{lemm}\label{key-lemma}
Let~$\SS$ be a monic subset of~$\MP\XX$ such that~$\SS$ is trivial under compositions. Then for all \nsps~$\hh_1$ and~$\hh_2$ satisfying~$\ov{\hh_1}=\ov{\hh_2}$, we have~$\hh_1-\hh_2\equiv 0$ modulo~$\SS$ with respect to~$\ov{\hh_1}$.
\end{lemm}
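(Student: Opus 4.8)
The plan is to classify the pair $(\hh_1,\hh_2)$ according to the two shapes permitted by Definition~\ref{defi-nsp}: each $\hh_i$ is either an element $\ss_i\in\SS$, or has the form $\Lnormed{\ss_i,\text{(letters)}}$ with $\ell(\bar{\ss_i})\geq 5$ and at least one appended letter. Write $\WW:=\ov{\hh_1}=\ov{\hh_2}$. I would first dispose of the case where both $\hh_1=\ss_1$ and $\hh_2=\ss_2$ lie in $\SS$: here $\ov{\ss_1}=\ov{\ss_2}=\WW$, so $\hh_1-\hh_2=\ss_1-\ss_2$ is exactly a general composition of the first kind (Definition~\ref{defi-composition}\,\ITEM1) with respect to $\WW$, and triviality of $\SS$ under compositions gives $\hh_1-\hh_2\equiv 0$ modulo $\SS$ with respect to $\WW$.

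The substantive case is when at least one $\hh_i$ carries appended letters. Since an appended-letter \nsp\ has leading monomial of length at least $\ell(\bar{\ss_i})+1\geq 6$, here $\ell(\WW)\geq 6$ and both underlying relations satisfy $\ell(\bar{\ss_i})\geq 5$, so I may write $\hh_i=\Lnormed{\ss_i,\uu_i}$ for (possibly empty) letter sequences $\uu_i$. By Lemma~\ref{leading-monomial} the head of $\ov{\hh_i}$ equals the head $\aa$ of $\ov{\ss_i}$, and since $\ov{\hh_1}=\ov{\hh_2}$ these heads coincide; moreover $\WW$ is a sorted left-normed bracket (if $\chart(\kk)\neq 2$) or such a bracket with one outer multiplication (the $\chart(\kk)=2$, $\YYY_5$ case), and this dichotomy is inherited by $\ov{\ss_1}$ and $\ov{\ss_2}$. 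Writing $t_i$ for the sorted tail of $\ov{\ss_i}$ and $w$ for the tail of $\WW$, the identity $w=t_i\cdot\uu_i$ in the free commutative monoid shows $w$ is a common multiple of $t_1$ and $t_2$; hence their least common multiple $L$ divides $w$, and with $r:=w/L$ one gets $\uu_i=(L/t_i)\cdot r$ as multisets.

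With this decomposition the key step is to peel off the common tail $r$. Using the reordering result established just before Definition~\ref{defi-composition} I would show $\hh_i\equiv\Lnormed{\ss_i,\ova{L/t_i},\ova r}$ modulo $\SS$ with respect to $\WW$ (both sides being reorderings of the appended multiset $\uu_i$), so that
\[
\hh_1-\hh_2\equiv\Lnormed{g,\ova r}\quad\text{modulo }\SS\text{ w.r.t. }\WW,\qquad g:=\Lnormed{\ss_1,\ova{L/t_1}}-\Lnormed{\ss_2,\ova{L/t_2}}.
\]
Now $g$ is precisely a general composition of type \ITEM2 (bracket case) or \ITEM3 (multiplication case, $\chart(\kk)=2$) with respect to $\WW':=\ov{\Lnormed{\ss_1,L/t_1}}$, and $\ell(\WW')\geq 5$. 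Triviality under compositions yields $g=\sum_i\alpha_i\ff_i$ with \nsps~$\ff_i$ satisfying $\ov{\ff_i}<\WW'$. I would then invoke Lemma~\ref{mult-letter}: since $\ell(\WW')\geq 5$ and $\ov{\ff_i}<\WW'$, each $\Lnormed{\ff_i,\ova r}\equiv 0$ modulo $\SS$ with respect to $\ov{\Lnormed{\WW',\ova r}}=\WW$, whence $\Lnormed{g,\ova r}\equiv 0$ and the claim follows.

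The main obstacle is the bookkeeping of the ``with respect to $\WW$'' bound through these reductions: one must verify, via Lemma~\ref{monomial-order} and Lemma~\ref{leading-monomial}, that appending letters in two stages ($L/t_i$ then $r$) produces the same leading monomial $\WW$ as appending $\uu_i$ at once, and that the reordering and composition-triviality steps only introduce \nsps\ whose leading monomials remain below $\WW$. Two degenerate situations are automatically subsumed: when $\ss_1=\ss_2$ the composition $g$ vanishes and the statement reduces to the reordering result alone, and when $\uu_1=\emptyset$ (so $r=\emptyset$) the argument recovers the case where one $\hh_i$ is a bare element of $\SS$ compared against an appended-letter \nsp.
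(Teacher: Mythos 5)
Your proposal is correct and follows essentially the same route as the paper's proof: split into the bare case (a type~\ITEM1 general composition) and the appended-letter case, factor the appended multisets into a common part and complementary parts realizing the least common multiple of the tails, recognize the resulting difference as a type~\ITEM2/\ITEM3 general composition, and finish by the reordering lemma together with Lemma~\ref{mult-letter}. The only difference is presentational — you derive the decomposition from the lcm of the tails while the paper takes the gcd of the appended strings, which yields the identical splitting — and your explicit verification that the heads and the bracket/multiplication type of~$\ov{\ss_1}$ and~$\ov{\ss_2}$ agree is a point the paper leaves implicit.
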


\begin{proof}
 If~$\hh_1=\ss_1$ and~$\hh_2=\ss_2$, then~$\hh_1-\hh_2$ is exactly a general composition and thus by assumption we have~$\hh_1-\hh_2\equiv 0$ modulo~$\SS$ with respect to~$\ov{\hh_1}$.

If~$\hh_1=\Lnormed{\ss_1,\aa_1\wdots\aa_n}$ and~$\hh_2=\Lnormed{\ss_2,\bb_1\wdots\bb_m}$ for some letters~$\aa_1\wdots\aa_n,\bb_1\wdots\bb_m$ with~$\nn\geq 0$ and~$\mm\geq 0$, then suppose
$$
\aa_1...\aa_n
=\aa_1'...\aa_l'\cc_1...\cc_p \mbox{ and }
\bb_1...\bb_m
=\aa_1'...\aa_l'\dd_1...\dd_q
$$
in~$[\XX]$, where none of the letters~$\cc_1\wdots\cc_p$ appears in the letters~$\dd_1\wdots\dd_q$. Then the polynomial~$\Lnormed{\ss_1,\ova{\cc_1\wdots\cc_p}}-\Lnormed{\ss_2,\ova{\dd_1\wdots\dd_q}}$ is a general composition and therefore, we may assume that~$\Lnormed{\ss_1,\ova{\cc_1\wdots\cc_p}}-\Lnormed{\ss_2,\ova{\dd_1\wdots\dd_q}}=\sum_i\alpha_i\gg_i$ for some \nsps~$\gg_i$'s such that~$\ov{\gg_i}<\ov{\Lnormed{\ss_1,\cc_1\wdots\cc_p}}$. Moreover, by Lemma~\ref{mult-letter}, we obtain
\begin{multline*}
  \hh_1-\hh_2=\Lnormed{\ss_1,\aa_1\wdots\aa_n}-\Lnormed{\ss_2,\bb_1\wdots\bb_m}
  \equiv \Lnormed{\ss_1,\ova{\aa_1\wdots\aa_n}}-\Lnormed{\ss_2,\ova{\bb_1\wdots\bb_m}}\\
  \equiv \Lnormed{\ss_1,\ova{\cc_1\wdots\cc_p},\aa_1'\wdots\aa_l'}
  -\Lnormed{\ss_2,\ova{\dd_1\wdots\dd_q},\aa_1'\wdots\aa_l'}
  \equiv\sum\nolimits_i\alpha_i\Lnormed{\gg_i,\aa_1'\wdots\aa_l'}
  \equiv 0
\end{multline*}
modulo~$\SS$ with respect to~$\ov{\hh_1}$.
\end{proof}
With the previous results, the proof of the CD-lemma is now a routine job, almost the same as in the case of associative algebras. For the convenience of the readers, we quickly repeat the argument.

\begin{theorem}\label{cd-lemma}{\bf (Composition-Diamond lemma for metabelian Poisson algebras) }Let~$\SS $ be a monic subset of~$\MP\XX$. Then the followings are equivalent:

\ITEM1 The set~$\SS$ is trivial under compositions.

\ITEM2 The set~$\SS$ is a \gsb\ for~$\PMP\XX\SS$.

\ITEM3 The set~$\mathsf{Irr}(S):=\{\WW\in \mpb \mid \WW\neq \ov{\hh} \mbox{ for every normal } S\mbox{-polynomial } \hh\}$ is a linear basis of~$\PMP\XX\SS$.
\end{theorem}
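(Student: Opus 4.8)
The plan is to establish the cyclic chain of implications \ITEM1 $\Rightarrow$ \ITEM2 $\Rightarrow$ \ITEM3 $\Rightarrow$ \ITEM1, following the standard template for a Composition--Diamond lemma while feeding in the structural results proved above; the only genuinely delicate point is the first implication.

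To prove \ITEM1 $\Rightarrow$ \ITEM2, I start from the observation that triviality under compositions includes triviality under multiplication compositions, so Lemma~\ref{as-nsp} applies and every nonzero~$\ff\in\Id\SS$ can be written as~$\ff=\sum_i\alpha_i\hh_i$ with each~$\hh_i$ a normal~$\SS$-polynomial; moreover each such~$\hh_i$ is monic by Lemma~\ref{leading-monomial}. Put~$\WW=\max_i\ov{\hh_i}$. If~$\WW=\ffb$, then the normal~$\SS$-polynomial attaining the maximum already witnesses~$\ffb=\ov{\hh_i}$, as the definition of a \gsb\ requires. The main obstacle is the case~$\WW>\ffb$, where the monomials~$\WW$ must cancel and hence several~$\hh_i$ share the leading monomial~$\WW$. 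Here I would invoke Lemma~\ref{key-lemma}: for two normal~$\SS$-polynomials with common leading monomial~$\WW$, their difference is a combination of normal~$\SS$-polynomials with leading monomials~$<\WW$. Replacing a pair of summands having leading monomial~$\WW$ by a single such summand together with lower-order normal~$\SS$-polynomials strictly decreases either~$\WW$ or the number of summands attaining~$\WW$; since~$<$ is a well order this rewriting terminates, yielding an expression whose maximal leading monomial is exactly~$\ffb$. Orchestrating Lemmas~\ref{as-nsp} and~\ref{key-lemma} against the well-ordering is the heart of the argument.

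To prove \ITEM2 $\Rightarrow$ \ITEM3, I first note that~$\Irr\SS$ spans~$\PMP\XX\SS$ directly by Lemma~\ref{f=irr+n-s-polynomials}, since the normal~$\SS$-polynomial part of any~$\ff$ lies in~$\Id\SS$. For linear independence I argue by contradiction: if some nonzero~$\gg=\sum_i\alpha_i\WW_i$ with distinct~$\WW_i\in\Irr\SS$ lay in~$\Id\SS$, then~$\ov\gg$ would be the largest~$\WW_i$ carrying a nonzero coefficient, so~$\ov\gg\in\Irr\SS$; yet~\ITEM2 supplies a normal~$\SS$-polynomial~$\hh$ with~$\ov\gg=\ov\hh$, contradicting the defining property of~$\Irr\SS$. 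Hence~$\gg=0$ in~$\MP\XX$, and since~$\Irr\SS\subseteq\mpb$ sits inside the linear basis of~$\MP\XX$ furnished by Theorem~\ref{iso}, every~$\alpha_i$ vanishes.

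To prove \ITEM3 $\Rightarrow$ \ITEM1, let~$\ff$ be any composition (multiplication or general) of~$\SS$ with respect to some~$\WW\in\mpb$; by construction the leading terms cancel, so~$\ov\ff<\WW$, while~$\ff\in\Id\SS$. By Lemma~\ref{f=irr+n-s-polynomials} I write~$\ff=\sum_i\alpha_i\WW_i+\sum_j\beta_j\hh_j$ with~$\WW_i\in\Irr\SS$,~$\WW_i\leq\ov\ff$, and normal~$\SS$-polynomials~$\hh_j$ satisfying~$\ov{\hh_j}\leq\ov\ff$. As~$\ff$ and each~$\hh_j$ lie in~$\Id\SS$, so does~$\sum_i\alpha_i\WW_i$, whence it is~$0$ in~$\PMP\XX\SS$; by~\ITEM3 the~$\WW_i$ are linearly independent there, forcing every~$\alpha_i=0$. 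Thus~$\ff=\sum_j\beta_j\hh_j$ with~$\ov{\hh_j}\leq\ov\ff<\WW$, which is exactly~$\ff\equiv0$ modulo~$\SS$ with respect to~$\WW$; therefore~$\SS$ is trivial under compositions, completing the cycle.
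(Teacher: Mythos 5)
Your proof is correct and takes essentially the same route as the paper's: \ITEM1 $\Rightarrow$ \ITEM2 via Lemma~\ref{as-nsp}, Lemma~\ref{key-lemma} and induction along the well order (the paper merges all summands attaining the maximal leading monomial in a single step rather than pairwise, but the argument is identical); \ITEM2 $\Rightarrow$ \ITEM3 via Lemma~\ref{f=irr+n-s-polynomials} plus the leading-monomial contradiction; \ITEM3 $\Rightarrow$ \ITEM1 by using Lemma~\ref{f=irr+n-s-polynomials} and the linear independence of~$\Irr\SS$ to annihilate the irreducible part of each composition. One cosmetic slip, which does not affect validity: in \ITEM3 $\Rightarrow$ \ITEM1 your phrase ``the leading terms cancel, so~$\ov\ff<\WW$'' applies only to general compositions---a multiplication composition carries no associated monomial~$\WW$, and its triviality requires exactly~$\ov{\hh_j}\leq\ov\ff$, which your derivation does establish.
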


\begin{proof}
 \ITEM1 $\Rightarrow$ \ITEM2  Let~$\Id\SS$ be the ideal of~$\MP\XX$ generated by~$\SS$. For every nonzero element~$ \ff$ in~$\Id\SS$, by Lemma~\ref{as-nsp},
 we may assume~$ \ff=\sum_{i=1}^n\alpha_i\hh_{i}$, where each~$\hh_{i}$ is a normal~$\SS$-polynomial and each~$\alpha_i$ is a nonzero element in~$\kk$.
Let~$\WW_i= \overline{\hh_{i}}$. Then we assume~$\WW_1=\WW_2=\pdots=\WW_l>\WW_{l+1}\geq \pdots$. We use induction on~$\WW_1$ to show that~$\ov\ff$ is the same as a leading monomial of a normal~$\SS$-pollynomial. For~$\WW_1=\ov{\ff}$, there is nothing to prove. For~$\WW_1>\ov{\ff}$, we have~$\sum_{i=1}^l\alpha_{i}=0$ and
 \begin{multline*}
 \ff=\sum_{i=1}^l\alpha_i\hh_{i}+\sum_{ i=l+1}^n\alpha_i\hh_{i}
 =\sum_{ i=1}^l\alpha_i\hh_{1}
 -\sum_{ i=1}^l\alpha_i(\hh_{1}
 -  \hh_{i})+\sum_{ i=l+1}^n\alpha_i\hh_{i}
 =0+\sum_{j }\beta_{j}\hh_{j}'+\sum_{ i=l+1}^n\alpha_i\hh_{i},
 \end{multline*}
 where each normal~$\SS$-polynomial~$\hh_j'$ satisfies that~$\ov{\hh_{j}'}< \WW_1$ by Lemma~\ref{key-lemma}.
Point~\ITEM2 follows by induction hypothesis.

\ITEM2 $\Rightarrow$ \ITEM3 By Lemma~\ref{f=irr+n-s-polynomials},
$\Irr\SS$ is a linear generating set for~$\PMP\XX\SS$. Suppose that $\sum_{i}\alpha_i\WW_i=0$ in~$\PMP\XX\SS$, where each element~$ \alpha_i\in \kk$ is nonzero and each monomial~$\WW_i\in \Irr\SS $ satisfies that~$\WW_1>\WW_2> \pdots$. Then, as an element in~$\MP\XX$, the polynomial~$\sum_{i}\alpha_i\WW_i$ is a nonzero element in~$\Id\SS $.  But then we have~$\overline{\sum_{i}\alpha_i\WW_i}=\WW_1$ in~$ \Irr\SS $, which contradicts with Point~\ITEM2.

\ITEM3 $ \Rightarrow$ \ITEM1 All multiplication compositions and general compositions of~$\SS$ are elements of~$\Id\SS$. By Lemma~\ref{f=irr+n-s-polynomials} and~\ITEM3, we obtain~\ITEM1.
 \end{proof}
\begin{rema}\label{shirshov-algo}
   We also recall the method how one can obtain a \gsb\ by calculating compositions in general. For every nonempty monic subset~$\SS$ of~$\MP\XX$, let~$\SS_1=\SS$. We calculate the compositions of~$\SS_1$, if all the compositions are trivial, then~$\SS_1$ is already a \gsb\ and we set~$\SS_2=\SS_1$;  Otherwise, we add all the nontrivial compositions into~$\SS_1$ to obtain a new set~$\SS_2$. (If a nontrivial composition~$\ff$ is not monic, then we just add~$\lcoe{\ff}^{-1}\ff$.) Assume that we already obtain~$\SS_1\wdots\SS_n$, if~$\SS_n$ is already a~\gsb, then we set~$\SS_{n+1}=\SS_n$; otherwise, we calculate all the compositions of~$\SS_n$ and add all the non-trivial ones into~$\SS_n$ to obtain a new set~$\SS_{n+1}$. Continue in this way, and we set~$\SS'=\cup_{i\geq 1}\SS_i$. Then we claim that~$\SS'$ is a \gsb\ for~$\PMP\XX\SS$. In fact, every composition of~$\SS'$ is a composition of~$\SS_n$ for some integer~$\nn$ and thus is trivial by construction.
\end{rema}
Compared with the paper on Gr\"{o}bner--Shirshov bases method for metabelian Lie algebras~\cite{met-lie-cd-lemma}, we can find that our new method avoids lots of discussions at the price of probably calculating more multiplication compositions when the length of the leading monomial of the involved polynomial in~$\SS$ is less than 5.
Moreover, the case for metabelian Poisson algebras is more complicated than that for metabelian Lie algebras, and so, we believe that it is better to avoid lengthy discussions, especially when our aim is to consider the word problem for finitely presented metabelian Poisson algebras but not to calculate the \gsb~for some concrete examples.

\subsection{Finitely presented metabelian Poisson algebra}
We are now ready to deal with our main topic: word problem for finitely presented metabelian Poisson algebras. Since~$\mpb$ is a linear basis of the free metabelian Poisson algebra generated by~$\XX$, and elements of~$\mpb$ are of the form~$\Lnormed{\aa_1,\ova{\aa_2\wdots\aa_n}}$ or of the form~$\Lnormeda{\aa_1}{\aa_2\wdots\aa_{n-1}}{\aa_n}$ except finitely many ones, it is natural to wonder whether the word problem for finitely presented metabelian Poisson algebra is solvable like finitely presented commutative algebras. We shall offer a positive answer in Theorem~\ref{finite-GSB}. Before going there, we first investigate more properties on \gsb, which are in fact general properties for Gr\"{o}bner--Shirshov bases for various kind of algebras.

\begin{lemm}\label{subset-gsb}
Let~$\SS$ be a monic \gsb\ for~$\PMP\XX\SS$, and let~$\RR$ be a subset of~$\SS$. If for every~$\ff$ in~$\SS$, there is a normal~$\RR$-polynomial~$\hh$  such that~$\bar\hh=\bar\ff$, then~$\RR$ is also a \gsb\ for~$\PMP\XX\SS$.
 \end{lemm}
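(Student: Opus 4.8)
The plan is to verify, directly from Definition~\ref{defi-gsb}, that~$\RR$ meets the two requirements of a \gsb\ for~$\PMP\XX\SS=\MP\XX{/}\Id\SS$: that~$\Id\RR=\Id\SS$, and that every nonzero element of~$\Id\SS$ shares its leading monomial with some normal~$\RR$-polynomial. Two cheap observations come first: since~$\RR\subseteq\SS$ and~$\SS$ is monic, $\RR$ is monic, so by Lemma~\ref{leading-monomial} every normal~$\RR$-polynomial is monic; and~$\RR\subseteq\SS$ gives~$\Id\RR\subseteq\Id\SS$ at once, leaving only the reverse inclusion in doubt.

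The crux is the following translation claim: every nonzero~$\gg$ in~$\Id\SS$ has~$\ov\hh=\ov\gg$ for some normal~$\RR$-polynomial~$\hh$. To see this, use that~$\SS$ is a \gsb\ to get a normal~$\SS$-polynomial of leading monomial~$\ov\gg$, which by Definition~\ref{defi-nsp} is~$\Lnormed{\ss,\aa_1\wdots\aa_n}$ for some~$\ss$ in~$\SS$ (allowing~$n=0$, and with~$\ell(\bar\ss)\geq 5$ once~$n\geq 1$). The hypothesis supplies a normal~$\RR$-polynomial~$\hh_0$ with~$\ov{\hh_0}=\bar\ss$, and I would take~$\hh:=\Lnormed{\hh_0,\aa_1\wdots\aa_n}$. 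Unwinding the form of~$\hh_0$ from Definition~\ref{defi-nsp} shows~$\hh$ is again a normal~$\RR$-polynomial ($n=0$ being trivial), and by Lemmas~\ref{monomial-order} and~\ref{leading-monomial}, applied to~$\ss$ and to~$\hh_0$ (both monic with leading monomial~$\bar\ss$ of length~$\geq 5$ when~$n\geq1$), both~$\Lnormed{\ss,\aa_1\wdots\aa_n}$ and~$\hh$ have leading monomial~$\ov{\Lnormed{\bar\ss,\aa_1\wdots\aa_n}}$. Hence~$\ov\hh=\ov{\Lnormed{\ss,\aa_1\wdots\aa_n}}=\ov\gg$.

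The inclusion~$\Id\SS\subseteq\Id\RR$ then follows by induction on the leading monomial~$\ov\gg$, valid since~$<$ well-orders~$\mpb$: for nonzero~$\gg$ in~$\Id\SS$, pick~$\hh$ as above; as~$\hh$ is monic with~$\ov\hh=\ov\gg$, the element~$\gg-\lcoe\gg\,\hh$ lies in~$\Id\SS$ with strictly smaller leading monomial, so it lies in~$\Id\RR$ by induction, and with~$\hh\in\Id\RR$ we get~$\gg\in\Id\RR$. Thus~$\Id\RR=\Id\SS$ and~$\PMP\XX\SS=\MP\XX{/}\Id\RR$; combined with the translation claim, both clauses of Definition~\ref{defi-gsb} hold, so~$\RR$ is a \gsb\ for~$\PMP\XX\SS$.

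The main obstacle is the translation claim, and inside it the verification that~$\Lnormed{\hh_0,\aa_1\wdots\aa_n}$ is genuinely a normal~$\RR$-polynomial with the predicted leading monomial. Writing~$\hh_0=\Lnormed{r,\bb_1\wdots\bb_m}$ with~$r$ in~$\RR$, one notes that~$m\geq 1$ already forces~$\ell(\bar r)\geq 5$ via Definition~\ref{defi-nsp}, while~$m=0$ gives~$\ell(\bar r)=\ell(\bar\ss)\geq 5$ whenever~$n\geq 1$; in either case~$\aa_1\wdots\aa_n$ may legitimately be appended to~$r$. This check, together with the monomial-order identities of Lemmas~\ref{monomial-order} and~\ref{leading-monomial}, is the only real content; the descent to~$\Id\RR=\Id\SS$ and the final bookkeeping are routine.
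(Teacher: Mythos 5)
Your proof is correct and follows essentially the same route as the paper's: you replace the element $\ss$ of $\SS$ underlying a normal $\SS$-polynomial by a normal $\RR$-polynomial with the same leading monomial, append the letters $\aa_1\wdots\aa_n$, invoke Lemmas~\ref{monomial-order} and~\ref{leading-monomial} to identify the leading monomials, and then run the standard descent on the well order $<$ to conclude $\Id\RR=\Id\SS$. Your treatment of the degenerate cases ($n=0$, $m=0$) is in fact slightly more explicit than the paper's, but the argument is the same.
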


 \begin{proof}
We first show that the leading monomial of an arbitrary nonzero polynomial in~$\Id\SS$ is a leading monomial of a normal~$\RR$-polynomial. For every nonzero element~$\ff$ in~$\Id\SS$, there is some \nsp~$\hh$ such that~$\bar\ff=\bar\hh$. If~$\hh=\ss$ for some element~$\ss$ in~$\SS$, then by assumption~$\bar\ff=\bar\ss$ is a leading monomial of some \nrp. Otherwise, we have~$\hh=\Lnormed{\ss,\aa_1\wdots\aa_n}$ for some element~$\ss$ in~$\SS$  and for some letters~$\aa_1\wdots\aa_n$ in~$\XX$ satisfying~$\ell(\bar\ss)\geq 5$ and~$\nn\geq 1$. Moreover, by assumption, we have~$\bar\ss=\bar\gg$ for some \nrp~$\gg=\Lnormed{\ss',\bb_1'\wdots\bb_m'}$ satisfying~$\ell(\ov{\ss'})\geq 5$. Finally, we obtain
$$\bar\ff=\ov{\Lnormed{\ss',\bb_1'\wdots\bb_m',\aa_1\wdots\aa_n}}.$$
Moreover, since~$\RR$ is a subset of~$\SS$, if~$\bar\ff=\bar\hh$ for some normal~$\RR$-polynomial~$\hh$, then~$\ff-\lcoe\ff\hh$ is still in~$\Id\SS$, and we have~$\ov{\ff-\lcoe\ff\hh}<\bar\ff$. It immediately follows that~$\ff$ can be written as a linear combination of normal~$\RR$-polynomials, in other words, we have~$\Id\RR=\Id\SS$.  Therefore,  
the set~$\RR$ is a \gsb\ for~$\PMP\XX\SS$.
 \end{proof}

To make description easier, we define the following notion on minimal \gsb. The results about minimal \gsb\ are just reminiscent of  those for commutative algebras. The proofs are quite easy, but for completeness, we still offer the proofs.
\begin{defi}
  A subset~$\SS$ of~$\MP\XX$ is called \emph{minimal}, if for every~$\ff$ in~$\SS$ and for every
  normal~$(\SS\setminus\{\ff\})$-polynomial~$\hh$, we have~$\bar\ff\neq \bar\hh$.
\end{defi}

Now we prove the existence of  minimal \gsb, that is, a set that is minimal and at the same time, it is a \gsb\ for a given algebra.

\begin{lemm}\label{minimal-gsb}
For every metabelian Poisson algebra~$\PMP\XX\SS$, there is a minimal \gsb~$\SS'$ for~$\PMP\XX\SS$.
\end{lemm}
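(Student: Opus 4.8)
The plan is to first produce any \gsb\ for the algebra and then trim it to a minimal one, exploiting the fact that redundancy among normal $\SS$-polynomials always propagates strictly upward in the well order $<$. First I would apply the procedure of Remark~\ref{shirshov-algo} to the given relation set to obtain a \gsb\ $\SS_0$ for $\PMP\XX\SS$, and rescale each element so that $\SS_0$ is monic. Write $M_0=\{\bar\ss\mid \ss\in\SS_0\}$ for its set of leading monomials; since $<$ restricts to a well order on $M_0$, transfinite induction on leading monomials is available throughout.

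Next I would isolate the genuinely needed leading monomials. Call $\WW\in M_0$ \emph{primitive} if $\WW$ cannot be written as $\ov{\Lnormed{\VV,\aa_1\wdots\aa_n}}$ for any $\VV\in M_0$ with $\ell(\VV)\geq 5$ and $\nn\geq 1$, and let $M_1$ be the set of primitive monomials. Because appending even one letter strictly increases the length, any such representation forces $\ell(\VV)<\ell(\WW)$, hence $\VV<\WW$; thus every non-primitive monomial is built from strictly smaller members of $M_0$, and the relation is well founded. For each $\WW\in M_1$ I would choose one $\ss_\WW\in\SS_0$ with $\bar{\ss_\WW}=\WW$ and set $\RR=\{\ss_\WW\mid\WW\in M_1\}\subseteq\SS_0$.

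To see that $\RR$ is a \gsb\ I would verify the hypothesis of Lemma~\ref{subset-gsb}: for every $\ff\in\SS_0$ there is a normal $\RR$-polynomial with leading monomial $\bar\ff$. This I prove by induction on $\bar\ff\in M_0$. If $\bar\ff\in M_1$ then $\ss_{\bar\ff}\in\RR$ already works. Otherwise $\bar\ff=\ov{\Lnormed{\VV,\aa_1\wdots\aa_n}}$ with $\VV\in M_0$, $\VV<\bar\ff$, $\ell(\VV)\geq 5$ and $\nn\geq1$; the induction hypothesis supplies a normal $\RR$-polynomial $\gg=\Lnormed{\ss',\bb_1\wdots\bb_t}$ with $\ell(\bar{\ss'})\geq5$ whose leading monomial is $\VV$, and then $\Lnormed{\gg,\aa_1\wdots\aa_n}=\Lnormed{\ss',\bb_1\wdots\bb_t,\aa_1\wdots\aa_n}$ is again a normal $\RR$-polynomial (Definition~\ref{defi-nsp}) whose leading monomial is $\ov{\Lnormed{\VV,\aa_1\wdots\aa_n}}=\bar\ff$, by Lemmas~\ref{monomial-order} and~\ref{leading-monomial}. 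Hence Lemma~\ref{subset-gsb} shows that $\RR$ is a \gsb\ for $\PMP\XX\SS$.

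Finally I would check minimality. Fix $\ss_\WW\in\RR$ and a normal $(\RR\setminus\{\ss_\WW\})$-polynomial $\hh$; it is built from some $\ss_\VV$ with $\VV\in M_1$ and $\VV\neq\WW$, so $\bar\hh$ equals either $\VV$ or $\ov{\Lnormed{\VV,\aa_1\wdots\aa_n}}$ with $\ell(\VV)\geq5$ and $\nn\geq1$. In the first case $\bar\hh=\VV\neq\WW$ since distinct members of $M_1\subseteq M_0$ are distinct monomials. In the second case $\bar\hh=\WW$ would exhibit $\WW$ as $\ov{\Lnormed{\VV,\aa_1\wdots\aa_n}}$ with $\VV\in M_0$, contradicting the primitivity of $\WW$. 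Thus $\bar\hh\neq\WW$ in all cases, so $\RR$ is minimal, and $\SS'=\RR$ is the desired minimal \gsb. I expect the one delicate point to be the well-foundedness of the trimming, namely that every non-primitive leading monomial descends to strictly smaller members of $M_0$; this is exactly what licenses both the induction establishing that $\RR$ is a \gsb\ and the contradiction securing minimality.
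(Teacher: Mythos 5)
Your proof is correct and is essentially the paper's own argument: both start from some monic \gsb, keep exactly one representative polynomial for each leading monomial that is not reachable as $\ov{\Lnormed{\VV,\aa_1\wdots\aa_n}}$ from another leading monomial, and then conclude via Lemma~\ref{subset-gsb}, with minimality following from the fact that appending letters strictly increases length. The only difference is organizational: the paper defines the retained set by transfinite recursion on the well order (keep~$\WW$ iff $\WW\in\Irr{\RR_{<\WW}}$), which makes the hypothesis of Lemma~\ref{subset-gsb} immediate, whereas your ``absolute'' primitivity condition (taken with respect to all of~$M_0$ rather than the already-retained monomials) defers that work to your induction on~$\bar\ff$; since every non-primitive monomial descends, by well-foundedness, to a primitive one, the two retention conditions select the same set, so the arguments agree in substance.
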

\begin{proof}
We shall construct~$\SS'$ directly.  Without loss of generality, assume~$\SS$ is a \gsb. Then for every~$\WW=\bar\ss$ for some polynomial~$\ss$ in~$\SS$, we arbitrarily choose one polynomial~$\hh_\WW$ in~$\SS$ satisfying~$\ov{\hh_\WW}=\WW$. Let~$\RR$ be the set of the collection of all the polynomials~$\hh_\WW$'s, that is,
 $$\RR=\{\hh_\WW\mid \WW=\bar\ss~\mbox{for some polynomial}~\ss\in\SS, \WW\in \mpb\}.$$
Then different elements in~$\RR$ have different leading monomials. By Lemma~\ref{subset-gsb}, $\RR$ is a \gsb\ for~$\PMP\XX\SS$.

 For every~$\WW$ in~$\mpb$, we define a subset~$\RR_{\WW}$ of~$\RR$ inductively. Let~$\WW_0$ be the minimal element of the set of all the leading monomials of polynomials in~$\RR$ and define~$\RR_\WW=\emptyset$ for every~$\WW<\WW_0$ in~$\mpb$.  We also define
$$\RR_{\WW_0}=\{\ff\in\RR\mid \bar\ff=\WW_0\}.$$
Assume~$\RR_{\WW'}$ has already been defined for every~$\WW'$ in~$\mpb$ satisfying~$\WW'<\WW$. Then we define
$$
\RR_{<\WW}=\bigcup_{\WW'<\WW}\RR_{\WW'}
$$
and$$
\RR_{\WW}=\{\ff \in \RR\mid \ffb
=\WW \mbox{ and } \ffb\in \Irr{\RR_{<\WW}}\}.
$$
  Finally, we define
  $$\SS'=\bigcup_{\WW\in \mpb}\RR_{\WW}.$$
  We claim that~$\SS'$ is a minimal \gsb\ for~$\PMP\XX\SS$. By construction, it is clear that~$\SS'$ is a minimal set. Moreover, for every~$\ff$ in~$\RR$, say~$\ffb=\WW$, if~$\ff$ lies in~$\RR_{\WW}$, then~$\ff$ lies in~$\SS'$;
  Otherwise, we obtain~$\ffb=\bar\hh$ for some normal~$\RR_{<\WW}$-polynomial~$\hh$. By Lemma~\ref{subset-gsb}, $\SS'$ is a minimal \gsb\ for~$\PMP\XX\RR=\PMP\XX\SS$.
\end{proof}
 
We are now in a position to show that every finitely generated metabelian Poisson algebra can be finitely presented (that is, every finitely generated metabelian Poisson algebra is Noetherian), and the word problem for every finitely presented metabelian Poisson algebra is solvable.
\begin{theorem}\label{finite-GSB}
Let~$\XX$ be a finite set. Then~$\PMP\XX\SS$ has a finite \gsb. In particular, the word problem for an arbitrary finitely presented metabelian Poisson algebra is solvable.
 \end{theorem}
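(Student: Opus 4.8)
The plan is to show that a \emph{minimal} \gsb, which exists for $\PMP\XX\SS$ by Lemma~\ref{minimal-gsb}, is automatically finite when $\XX$ is finite, and then, for the word problem, to produce such a basis effectively by running Shirshov's algorithm of Remark~\ref{shirshov-algo}. First I would classify the possible leading monomials. Every leading monomial lies in $\mpb$, and inspecting the sets $\YYY_1\wdots\YYY_5$ shows that the only monomials of length $\geq 5$ are of type $\YYY_1$, namely $\Lnormed{\bb_1\wdots\bb_k}$, and (when $\chart(\kk)=2$) of type $\YYY_5$, namely $\Lnormed{\bb_1\wdots\bb_{k-1}}\cdot \bb_k$; all members of $\YYY_2,\YYY_3,\YYY_4$ and the short members of $\YYY_1,\YYY_5$ have length $\leq 4$. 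Since $\XX$ is finite, there are only finitely many monomials of length $\leq 4$ in $\mpb$, hence only finitely many leading monomials of the minimal basis of length $\leq 4$.

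For the leading monomials of length $\geq 5$ I would introduce a divisibility relation induced by the normal-$\SS$-polynomial construction of Definition~\ref{defi-nsp}. A length-$\geq 5$ monomial of either type is determined by its head letter $\bb_1$ together with the multiset $\{\bb_2\wdots\bb_k\}$ of the remaining letters (for $\YYY_5$ the trailing factor is forced to be the largest letter of this multiset, so no extra data is needed), and the two types are distinguished by $\pb$, one having $\pb=\ell-1$ and the other $\pb=\ell-2$. By Lemmas~\ref{monomial-order} and~\ref{leading-monomial}, when $\ell(\bar\ss)\geq 5$ the monomial $\ov{\Lnormed{\ss,\aa_1\wdots\aa_n}}$ is obtained from $\bar\ss$ by keeping the head fixed and adjoining $\aa_1\wdots\aa_n$ to the tail multiset, with the type unchanged. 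Writing $\WW_1\preceq\WW_2$ when $\WW_2$ arises from $\WW_1$ in this way, the key point is that if $\ss_1,\ss_2$ are distinct elements of the minimal basis with $\bar{\ss_1}\preceq\bar{\ss_2}$ and $\ell(\bar{\ss_1})\geq 5$, then $\bar{\ss_2}$ is the leading monomial of the normal $(\SS\setminus\{\ss_2\})$-polynomial $\Lnormed{\ss_1,\aa_1\wdots\aa_n}$, contradicting minimality; likewise two distinct elements cannot share a leading monomial. Thus the length-$\geq 5$ leading monomials form an antichain for $\preceq$. Fixing the finitely many heads and the two types, $\preceq$ becomes multiset inclusion over the finite alphabet $\XX$, so Dickson's lemma forbids an infinite antichain. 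Therefore the minimal basis has finitely many leading monomials and, carrying at most one polynomial per leading monomial, is finite; this proves the first assertion.

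For the word problem I would add the hypothesis that $\SS$ is finite and run the algorithm of Remark~\ref{shirshov-algo}, at each stage reducing every nontrivial composition via Lemma~\ref{f=irr+n-s-polynomials} until its leading monomial lies in $\Irr$ of the current set, and then adjoining it. Each such adjunction strictly enlarges the $\preceq$-closure of the current collection of leading monomials, and by the same Dickson argument together with the finiteness of the length-$\leq 4$ monomials this ascending chain stabilizes after finitely many steps; hence the algorithm terminates and returns a finite \gsb. Since $\SS$ is finite and $\kk$ is computable, every composition and every reduction is effectively computable, so this finite basis is produced by an actual algorithm. Finally, given any $\ff\in\MP\XX$, Lemma~\ref{f=irr+n-s-polynomials} rewrites $\ff$ as $\sum_i\alpha_i\WW_i$ modulo \nsps\ with each $\WW_i\in\Irr\SS$, and by Theorem~\ref{cd-lemma} the set $\Irr\SS$ is a linear basis of $\PMP\XX\SS$; thus $\ff=0$ in $\PMP\XX\SS$ exactly when this irreducible part vanishes, which is decidable. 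This solves the word problem.

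The hard part is the divisibility bookkeeping underlying the Dickson argument: one must verify precisely that adjoining letters through the normal-$\SS$-polynomial operation corresponds, within each fixed type and head, to multiset adjunction on the tails, so that the length-$\geq 5$ leading monomials genuinely embed into $\mathbb{N}^{\D\XX}$ and Dickson's lemma applies; and one must check that the finitely many short ($\ell\leq 4$) leading monomials interfere neither with the antichain argument for minimality nor with the termination of the algorithm.
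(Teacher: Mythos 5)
Your proposal is correct, and its first half is essentially the paper's own argument: the paper also passes to a minimal \gsb\ via Lemma~\ref{minimal-gsb}, sorts the leading monomials of length~$\geq 5$ by head letter (and, in characteristic~$2$, trailing letter), uses minimality to conclude that the corresponding tails form an antichain for divisibility in the free commutative monoid~$[\XX]$, and derives a contradiction --- the only cosmetic difference being that the paper quotes Hilbert's Basis Theorem where you quote Dickson's lemma, which are interchangeable here. Where you genuinely diverge is the termination of the completion procedure. The paper runs no Noetherian argument on the algorithm itself: it takes the finite minimal \gsb~$R=\{f_1\wdots f_m\}$ furnished by the first claim, notes that the union~$S'=\bigcup_t S_t$ produced by Remark~\ref{shirshov-algo} is a \gsb, so each~$\ov{f_i}$ is the leading monomial of a normal $S'$-polynomial built on a single element~$g_i\in S'$, and concludes that any finite stage~$S_t$ containing all the~$g_i$ is already a \gsb; by Theorem~\ref{cd-lemma} the algorithm therefore stabilizes by stage~$t$. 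You instead inter-reduce every nontrivial composition to a remainder with irreducible leading monomial before adjoining it, and show directly that the adjoined leading monomials form a bad sequence for~$\preceq$, hence a finite one. Both are sound. The paper's route requires no modification of the naive algorithm --- a point worth noting, since a raw nontrivial composition may well have a reducible leading monomial, so your Dickson argument really does need the inter-reduction step --- and termination comes for free from the first claim; your route is self-contained (termination is independent of the first claim) and is the standard Buchberger-style argument with a transparent stopping criterion. One detail to tighten: adjoin remainders one at a time, updating ``the current set'' between adjunctions (or invoke the infinite-Ramsey form of Dickson's lemma), because if a whole batch of compositions is reduced against the same stage and adjoined simultaneously, two members of the batch can be $\preceq$-comparable, and the bad-sequence property as you state it fails.
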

\begin{proof}
To prove the first claim, by Lemma~\ref{minimal-gsb}, we may assume that~$\SS$ is a minimal \gsb\ for~$\PMP\XX\SS$. Suppose that~$\SS$ is infinite
and define $$\SS_1=\{\ff\in \SS\mid \ell(\bar\ff)\geq 5\}.$$ Then since~$\SS$ is an infinite minimal set and~$\XX$ is finite, we deduce that~$\SS_1$ is also infinite. For every~$\aa$ in~$\XX$, define
$$\SS_{\aa}=\{\ff\in\SS\mid \ffb=\Lnormed{\aa,\aa_1\wdots\aa_n} \mbox{ for some letters } \aa,\aa_1\wdots\aa_n\in\XX,\nn\geq 4 \}$$ and for all~$\aa,\bb$ in~$\XX$, define$$\SS_{\aa,\bb}=\{\ff\in\SS\mid \ffb=\Lnormed{\aa,\aa_1\wdots\aa_n}\cdot \bb \mbox{ for some letters } \aa,\bb,\aa_1\wdots\aa_n\in\XX,\nn\geq 3 \}.$$
Then we obtain
$$
\SS_1=\bigcup_{\aa\in\XX}\SS_a\cup \bigcup_{\aa,\bb\in\XX}\SS_{\aa,\bb}.
$$
Since~$\SS_1$ is infinite and~$\XX$ is finite, some set~$\SS_a$ or~$\SS_{\aa,\bb}$ is infinite. Without loss of generality, assume that~$\SS_{\aa,\bb}$ is infinite. (Note that~$\SS_{\aa,\bb}$ is an empty set if~$\chart(\kk)\neq2$.) For all~$\ff_1$ and~$\ff_2$ in~$\SS_{\aa,\bb}$, suppose that
$$\bar{\ff_1}=\Lnormed{\aa,\aa_1\wdots\aa_n}\cdot \bb$$ and
$$\bar{\ff_2}=\Lnormed{\aa,\bb_1\wdots\bb_m}\cdot \bb.$$
Then for all~$\cc_1\wdots\cc_p$ in~$\XX$, for the \nsp~$\Lnormed{\ff_1,\cc_1\wdots\cc_m}$, by Lemma~\ref{leading-monomial}, we obtain
$$
\ov{\Lnormed{\ff_1,\cc_1\wdots\cc_m}}
=\Lnormeda{\aa}{\aa_1\wdots\aa_n,\cc_1\wdots\cc_p}{\bb}.
$$
Since~$\SS$ is minimal, we have
$$
\aa_1...\aa_n\cdot\cc_1...\cc_p
\neq \bb_1...\bb_m
$$
in the free commutative monoid~$[\XX]$. Define
$$
\SS_{\aa,\bb}'
=\{\aa_1...\aa_n \in[\XX]\mid \ff\in\SS, \ffb
=\Lnormed{\aa,\aa_1\wdots\aa_n}\cdot \bb,\nn
\geq 3,\aa,\bb,\aa_1\wdots\aa_n\in\XX\}.
$$ Then~$\SS_{\aa,\bb}'$ is an infinite set such that for all~$\WW_1$ and~$\WW_2$ in~$\SS_{\aa,\bb}'$, the monomial~$\WW_1$ is not a multiple of~$\WW_2$. But then the ideal generated by~$\SS_{\aa,\bb}'$ in the free commutative algebra~$\kk[\XX]$ is not finitely generated, which contradicts to the Hilbert's Basis Theorem. The reasoning for the case that~$\SS_a$ is infinite is similar to the above reasoning for the case that~$\SS_{\aa,\bb}$ is infinite. Therefore, the set~$\SS$ is a finite \gsb\ for~$\PMP\XX\SS$.

We now turn to the second claim and assume that~$\SS$ is a finite set. We claim that one can obtain a finite \gsb\ by computing compositions as in Remark~\ref{shirshov-algo} in finite steps. With the notations of Remark~\ref{shirshov-algo}, suppose that~$\SS'=\cup_{i}\SS_i$ is a \gsb\ for the finitely presented metabelian Poisson algebra~$\PMP\XX\SS$. By the first claim, there is a finite monic minimal \gsb\ $\RR=\{\ff_1\wdots\ff_m\}$ for~$\PMP\XX\SS$. Then by Theorem~\ref{cd-lemma}, for every~$\ff_i$ in~$\RR$, there is some normal~$\SS'$-polynomial~$\hh_i$ such that~$\ov{\ff_i}=\ov{\hh_i}$. If~$\hh_i=\Lnormed{\ss_\ii,\aa_{\ii,1}\wdots\aa_{\ii,n_i}}$ for some polynomial~$\ss_\ii$ in~$\SS'$ with~$\ell(\ov{\ss_\ii})\geq 5$ and for some letters~$\aa_{\ii,1}\wdots\aa_{\ii,\nn_i}$ in~$\XX$, or, if~$\hh_i=\ss_\ii$ for some polynomial~$\ss_\ii$ in~$\SS'$, then let~$\gg_i=\ss_\ii$. Suppose that~$\gg_1\wdots\gg_m$ lie in~$\SS_{t}$, then it is clear that~$\SS_t$ is a \gsb. Moreover, for every element~$\ff$ in~$\MP\XX$, if~$\ffb=\bar\hh$ for some normal~$\SS_t$-polynomial~$\hh$, then~$\ff$ can be reduced to~$\ff-\lcoe\ff\hh$, and we have~$\ov{\ff-\lcoe\ff\hh}<\bar\ff$. Since~$<$ is a well order, the subtraction process  must terminate after finitely many iterations. Finally, the polynomial~$\ff=0$ in~$\PMP\XX\SS$ holds if and only if~$\ff$ is reduced to~$0$ by the above subtraction process.
 \end{proof}

 \subsection{Conditions for an endomorphism of~$\MP\XX$ to be an isomorphism} In this subsection, we will apply the Gr\"obner--Shirshov bases theory for metabelian Poisson algebras to study the automorphisms of~$\MP\XX$, where~$\XX$ is a finite well-ordered set.

 First of all, we observe that for a finitely generated free metabelian Poisson algebra~$\MP\XX$, an endomorphism~$\varphi$ of~$\MP\XX$ is an isomorphism if and only if~$\varphi$ is an epimorphism. This is a general fact, but for the convenience of the readers, we quickly offers a proof.

 \begin{lemm}\label{lemm-epi}
Let~$\varphi$ be an endomorphism of a free metabelian Poisson algebra~$\MP\XX$ generated by a finite set~$\XX$. Then~$\varphi$ is an isomorphism if~$\varphi$ is an epimorphism.
\end{lemm}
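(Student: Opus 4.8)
The plan is to prove that $\MP\XX$ is \emph{Hopfian} when $\XX$ is finite, i.e. that every surjective endomorphism is injective; the engine driving the argument is the Noetherian property already established in Theorem~\ref{finite-GSB}. So throughout I read ``epimorphism'' as ``surjective homomorphism.'' First I would record that $\MP\XX$ satisfies the ascending chain condition on ideals. Every ideal $I$ of $\MP\XX$ equals $\Id{I}$, so $\PMP\XX{I}=\MP\XX/I$ admits, by Theorem~\ref{finite-GSB}, a \emph{finite} Gr\"obner--Shirshov basis $\RR$; since a Gr\"obner--Shirshov basis for $\MP\XX/I$ generates $I$ as an ideal (Definition~\ref{defi-gsb}), we have $I=\Id{\RR}$ with $\RR$ finite, so $I$ is finitely generated. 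The usual argument then gives the ascending chain condition: for a chain $I_1\subseteq I_2\subseteq\cdots$ the union is an ideal, hence finitely generated, and all its generators lie in a single $I_N$, whence the chain stabilises at $I_N$.

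Next I would form the chain of iterated kernels. Since $\varphi$ is an algebra homomorphism, each set $\ker\varphi^i$ is closed under $\cdot$, under $(-,-)$, and absorbs products with arbitrary elements of $\MP\XX$, so it is an ideal of $\MP\XX$; moreover $\ker\varphi\subseteq\ker\varphi^2\subseteq\cdots$. By the ascending chain condition there is an integer $n$ with $\ker\varphi^{n}=\ker\varphi^{n+1}$.

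The final step is the classical diagram chase. As $\varphi$ is surjective, so is every power $\varphi^{n}$. Take an arbitrary $\xx\in\ker\varphi$ and choose $\yy\in\MP\XX$ with $\varphi^{n}(\yy)=\xx$. Then $\varphi^{n+1}(\yy)=\varphi(\xx)=0$, so $\yy\in\ker\varphi^{n+1}=\ker\varphi^{n}$, and therefore $\xx=\varphi^{n}(\yy)=0$. Hence $\ker\varphi=0$, so $\varphi$ is injective; being also surjective, $\varphi$ is an isomorphism.

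I expect the only point genuinely requiring care to be the first paragraph, namely translating Theorem~\ref{finite-GSB} into the assertion that every ideal of $\MP\XX$ is finitely generated and hence that the ascending chain condition holds. Everything afterward is the standard proof that a (left/two-sided) Noetherian algebra is Hopfian and uses nothing specific to the Poisson structure beyond the fact that kernels of homomorphisms are ideals.
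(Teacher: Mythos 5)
Your proof is correct, but it takes a genuinely different route from the paper's. You derive the ascending chain condition on ideals of~$\MP\XX$ from Theorem~\ref{finite-GSB}: every ideal~$I$ equals~$\Id{I}$, hence admits a finite \gsb~$\RR$, and by Definition~\ref{defi-gsb} this gives~$I=\Id\RR$, so~$I$ is finitely generated; you then run the standard Noetherian-implies-Hopfian argument on the chain~$\ker\varphi\subseteq\ker\varphi^2\subseteq\cdots$. This is legitimate and gap-free: Lemma~\ref{lemm-epi} appears after Theorem~\ref{finite-GSB} in the paper, so there is no circularity, and your reading of that theorem (together with Definition~\ref{defi-gsb}) as ``every ideal of~$\MP\XX$ is finitely generated'' is exactly what it yields. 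The paper instead argues by contradiction with a minimal counterexample: if~$\varphi(\ff)=0$ with~$\ff\neq0$ and~$\ell(\ov\ff)=\nn$ minimal, it passes to the quotient~$\MP\XX/I$, where~$I$ is the ideal spanned by all monomials in~$\mpb$ of length~$\geq\nn+1$; since the two operations add lengths (or give zero), one has~$\varphi(I)\subseteq I$, so~$\varphi$ induces a surjection~$\widetilde{\varphi}$ of the finite-dimensional space~$\MP\XX/I$, which is automatically bijective by linear algebra, yet it kills the nonzero class~$\ff+I$ --- a contradiction. The trade-off: the paper's argument is self-contained, needing only linear algebra on finite-dimensional quotients and nothing about Gr\"obner--Shirshov bases or Hilbert's Basis Theorem, whereas your argument is more conceptual and general --- it shows~$\MP\XX$ is Hopfian for the abstract reason that it is Noetherian --- at the cost of invoking the heaviest theorem in the paper.
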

\begin{proof}
Suppose that~$\varphi$ is not an injection and let~$\ff$ be a nonzero polynomial satisfying~$\varphi(\ff)=0$ such that~$\ell(\ov\ff)$ is minimal among such polynomials. Assume~$\nn=\ell(\ov\ff)$ and let~$I$ be the ideal of~$\MP\XX$ generated by all the monomials in~$\mpb$ of length~$\geq\nn+1$. Then~$\varphi$ induces an epimorphism~$\widetilde{\varphi}$ from the quotient algebra~$\MP\XX/I$ to~$\MP\XX/I$, which is automatically an isomorphism because~$\MP\XX/I$ is finite dimensional. However, $\ff+I$ is a nonzero element in~$\MP\XX/I$ satisfying~$\widetilde{\varphi}(\ff+I)=I$, this contradicts with the fact that~$\widetilde{\varphi}$ is an isomorphism.
\end{proof}
 
Now we introduce a notation to denote the linear part of a polynomial. For every polynomial
$$\ff=\sum_{1\leq \tt\leq \pp}\alpha_\tt\WW_\tt+\sum_{\pp+1\leq \tt\leq \qq}\alpha_\tt\WW_\tt\in\MP\XX$$
  satisfying
  $$\ell(\WW_1)\geq\pdots\geq\ell(\WW_\pp)\geq 2>1=\ell(\WW_{\pp+1})=\pdots=\ell(\WW_\qq),$$ we define
  $$\ff^{(2)}=\sum_{1\leq \tt\leq \pp}\alpha_\tt\WW_\tt \mbox{ and }
  \ff^{(1)}=\sum_{\pp+1\leq \tt\leq \qq}\alpha_\tt\WW_\tt.$$  
Now we can introduce the last main result of this paper, which offers an algorithm to decide whether an endomorphism~$\varphi$ of a finitely generated free metabelian Poisson algebra~$\MP\XX$ induced by the images of elements in~$\XX$ is an automorphism.

\begin{theorem}\label{auto}
  Let~$\XX=\{\bb_1\wdots\bb_\nn\}$ be a finite well-ordered set and let~$\MP\XX$ be the free metabelian Poisson algebra generated by~$\XX$. Suppose that~$\varphi$ is an endomorphism of~$\MP\XX$ induced by
  $$\varphi(\bb_\ii)=\ff_\ii=\ff_{\ii}^{(2)}+\ff_{\ii}^{(1)}, 1\leq \ii\leq \nn,$$
and suppose that~$\tl\varphi$ is the endomorphism of the subspace~$\kk\XX$ of~$\MP\XX$ spanned by~$\XX$ induced by~$\tl\varphi(\bb_\ii)=\ff_{\ii}^{(1)}$. Then~$\varphi$ is an isomorphism of~$\MP\XX$ if and only if $\tl\varphi$ is an isomorphism of~$\kk\XX$ and every~$\ff_{\mm}^{(2)}$ $(\mm\leq \nn)$ lies in the ideal~$\Id\SS$ of~$\MP\XX$ generated by the set~$\SS$ defined by~$$\SS=\{(\ff_\ii,\ff_\jj), \ff_\ii\cdot\ff_\jj \mid \ii,\jj\leq \nn, \varphi(\bb_\ii)=\ff_\ii, \varphi(\bb_\jj)=\ff_\jj\}.$$
In particular, there exists an algorithm to decide whether~$\varphi$ is an isomorphism of~$\MP\XX$.
\end{theorem}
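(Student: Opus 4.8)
The plan is to deduce the statement from the epimorphism criterion of Lemma~\ref{lemm-epi} together with the vector space decomposition $\MP\XX=\kk\XX\oplus\MP\XX^2$, where $\MP\XX^2=\MP\XX\cdot\MP\XX+(\MP\XX,\MP\XX)$ is spanned by the basis monomials in $\mpb$ of length~$\geq 2$. First I would record two structural facts. Since $\MP\XX^2$ equals the ideal of $\MP\XX$ generated by the length-two monomials $\{\bb_\ii\bb_\jj,(\bb_\ii,\bb_\jj)\}$, it is a \emph{characteristic} ideal: any endomorphism $\psi$ satisfies $\psi(\MP\XX^2)=(\psi\MP\XX)^2\subseteq\MP\XX^2$, so $\varphi$ induces an endomorphism $\bar\varphi$ of $\MP\XX/\MP\XX^2\cong\kk\XX$, and under this identification $\bar\varphi=\tl\varphi$ because $\ff_\ii\equiv\ff_\ii^{(1)}$ modulo $\MP\XX^2$. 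Moreover, by~\eqref{length-4} the ideal $\MP\XX^2$ is abelian, so for $x\in\MP\XX^2$ and $\ff\in\MP\XX$ one has $(x,\ff^{(2)})=0=x\cdot\ff^{(2)}$, whence $(x,\ff)=(x,\ff^{(1)})$ and $x\cdot\ff=x\cdot\ff^{(1)}$; that is, the $\MP\XX$-action on $\MP\XX^2$ factors through the linear part.

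The computational heart of the argument is the identity $\varphi(\MP\XX^2)=\Id\SS$, to be proved under the hypothesis that $\tl\varphi$ is an isomorphism. Since $\MP\XX^2$ is the ideal generated by $\{\bb_\ii\bb_\jj,(\bb_\ii,\bb_\jj)\}$ and $\varphi$ is a homomorphism, $\varphi(\MP\XX^2)$ is the ideal of the subalgebra $\mB:=\varphi(\MP\XX)$ generated by $\SS=\{\ff_\ii\cdot\ff_\jj,(\ff_\ii,\ff_\jj)\}$. Using that the action on $\MP\XX^2$ factors through linear parts, I would identify this ideal-in-$\mB$ with the $\kk\XX$-submodule of $\MP\XX^2$ generated by $\SS$, which is exactly $\Id\SS$: for $s\in\SS$ and $\aa\in\XX$ write $\aa=\sum_\ii c_\ii\ff_\ii^{(1)}$ (possible since $\{\ff_\ii^{(1)}\}$ is a basis of $\kk\XX$) and check $(s,\aa)=\sum_\ii c_\ii(s,\ff_\ii^{(1)})=\sum_\ii c_\ii(s,\ff_\ii)\in\mB$, and likewise for $s\cdot\aa$; iterating gives $\Id\SS\subseteq\mB$, while $\varphi(\MP\XX^2)\subseteq\Id\SS$ is immediate. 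Hence $\varphi(\MP\XX^2)=\Id\SS$ whenever $\tl\varphi$ is onto.

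With these in place both implications are short. For sufficiency, assume $\tl\varphi$ is an isomorphism and $\ff_\mm^{(2)}\in\Id\SS$ for all $\mm\leq\nn$. Then $\ff_\mm^{(2)}\in\Id\SS\subseteq\mB$ and $\ff_\mm\in\mB$, so $\ff_\mm^{(1)}=\ff_\mm-\ff_\mm^{(2)}\in\mB$; as $\{\ff_\mm^{(1)}\}_{\mm\leq\nn}$ is a basis of $\kk\XX$ we obtain $\XX\subseteq\kk\XX\subseteq\mB$, whence $\mB=\MP\XX$ since $\XX$ generates $\MP\XX$. Thus $\varphi$ is an epimorphism, hence an isomorphism by Lemma~\ref{lemm-epi}. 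For necessity, if $\varphi$ is an isomorphism then $\varphi(\MP\XX^2)=(\varphi\MP\XX)^2=\MP\XX^2$, so $\tl\varphi$ is an isomorphism and $\Id\SS=\varphi(\MP\XX^2)=\MP\XX^2$, which contains every $\ff_\mm^{(2)}$.

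Finally, for the decidability statement I would observe that both conditions are effectively checkable: deciding whether $\tl\varphi$ is an isomorphism of $\kk\XX$ is linear algebra over the computable field $\kk$ (invertibility of the matrix of $\tl\varphi$ on the basis $\XX$), and deciding whether each $\ff_\mm^{(2)}$ lies in $\Id\SS$ is precisely an instance of the word problem for the finitely presented algebra $\PMP\XX\SS$, which is solvable by Theorem~\ref{finite-GSB} (compute a finite \gsb\ and reduce). The step I expect to require the most care is the identity $\varphi(\MP\XX^2)=\Id\SS$: one must verify that the ideal generated by $\SS$ \emph{inside the subalgebra} $\mB$ coincides with $\Id\SS$, and this is exactly where the hypothesis that $\tl\varphi$ is onto is indispensable, since without it $\mB$ need not see enough of $\kk\XX$ to reconstruct the full $\MP\XX$-action on the abelian ideal $\MP\XX^2$.
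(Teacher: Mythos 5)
Your proposal is correct, and its skeleton coincides with the paper's: both arguments rest on Lemma~\ref{lemm-epi}, on the decomposition of a polynomial into its linear part and its part in~$\MP\XX^2$, on the metabelian identity~\eqref{metabelian-poisson-def} which makes quadratic parts act trivially on~$\MP\XX^2$ (so that products with a letter~$\aa=\sum_\jj\beta_\jj\ff_\jj^{(1)}$ can be replaced by products with~$\sum_\jj\beta_\jj\ff_\jj$), and on Theorem~\ref{finite-GSB} for decidability; in particular your sufficiency direction is, up to phrasing, exactly the paper's chain $\Id\SS\subseteq\langle\ff_1\wdots\ff_\nn\rangle$, hence $\ff_\mm^{(1)}=\ff_\mm-\ff_\mm^{(2)}\in\langle\ff_1\wdots\ff_\nn\rangle$, hence $\varphi$ is onto. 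Where you genuinely diverge is the necessity direction: the paper expands $\ff_\mm^{(1)}=\sum_\ii\alpha_{\mm,\ii}\ff_\ii+T(\ff_1\wdots\ff_\nn)$ inside the image subalgebra and compares linear parts to conclude $\alpha_{\mm,\mm}=1$, $\alpha_{\mm,\ii}=0$ for $\ii\neq\mm$, so that $\ff_\mm^{(2)}=-T(\ff_1\wdots\ff_\nn)\in\Id\SS$, whereas you isolate the two-sided identity $\varphi(\MP\XX^2)=\Id\SS$ (valid once $\tl\varphi$ is onto, since $\varphi(\MP\XX^2)$ is the ideal of $\varphi(\MP\XX)$ generated by~$\SS$) and then read off necessity from $\Id\SS=\varphi(\MP\XX^2)=\MP\XX^2\ni\ff_\mm^{(2)}$. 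Your organization buys a slightly stronger conclusion -- when $\varphi$ is an automorphism one gets $\Id\SS=\MP\XX^2$ exactly, not merely membership of the $\ff_\mm^{(2)}$ -- and it makes both implications consequences of one structural fact; the paper's coefficient-comparison is more elementary in that it never needs the reverse inclusion $\Id\SS\subseteq\varphi(\MP\XX^2)$, only the forward one implicit in $T(\ff_1\wdots\ff_\nn)\in\Id\SS$. Your one terse step, ``iterating gives $\Id\SS\subseteq\mB$,'' is sound but deserves the explicit remark that the ideal of~$\mB$ generated by~$\SS$ sits inside~$\MP\XX^2$ and is closed under multiplication by~$\kk\XX$, hence (since quadratic parts act trivially) under multiplication by all of~$\MP\XX$, so it is an ideal of~$\MP\XX$ containing~$\SS$.
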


\begin{proof}
Suppose that~$\varphi$ is an isomorphism of~$\MP\XX$, then clearly~$\tl\varphi$ is an isomorphism of~$\kk\XX$, and thus~$\ff_1^{(1)}\wdots\ff_{\nn}^{(1)}$ are linearly independent. Moreover, every~$\ff_{\mm}^{(1)}$ $(\mm\leq \nn)$ lies in the subalgebra of~$\MP\XX$ generated by~$\{\ff_\ii\mid 1\leq \ii\leq \nn,\varphi(\bb_\ii)=\ff_\ii\}$.
Assume
\begin{equation}\label{sum1}
\ff_{\mm}^{(1)}=\sum_{\ii}\alpha_{\mm,\ii}\ff_{\ii}+T(\ff_1\wdots\ff_\nn),
\end{equation}
where~$T$ is a linear combination of elements of length~$\geq2$ in~$\mpb$, and~$T(\ff_1\wdots\ff_\nn)$ is the resulting polynomial after the substitution of~$\bb_\ii$ by~$\ff_\ii$ for every~$\ii\leq \nn$ simultaneously. Summarizing all the monomials of length~1 in Eq.~\eqref{sum1}, we know~$\alpha_{\mm,\mm}=1$ and~$\alpha_{\mm,\ii}=0$ for every~$\ii\neq\mm$. Therefore, we deduce
$$
\ff_{\mm}^{(2)}=-T(\ff_1\wdots\ff_\nn)\subseteq \Id\SS.
$$

On the contrary, it is enough to show that~$\varphi$ is an epimorphism of~$\MP\XX$.  Assume
\begin{equation}\label{sum2}
\bb_\ii=\sum_{\jj}\beta_{\ii,\jj}\ff_{\jj}^{(1)}.
\end{equation}
Then for every~$\WW\in\mpb$ satisfying~$\ell(\WW)\geq 2$, we have
$$(\WW,\bb_\ii)=(\WW,\sum_{\jj}\beta_{\ii,\jj}\ff_{\jj}^{(1)})
=(\WW,\sum_{\jj}\beta_{\ii,\jj}\ff_{\jj})$$
and
$$\WW\cdot\bb_\ii=\WW\cdot(\sum_{\jj}\beta_{\ii,\jj}\ff_{\jj}^{(1)})
=\WW\cdot(\sum_{\jj}\beta_{\ii,\jj}\ff_{\jj}).$$ 
It follows that~$\Id\SS\subseteq \langle \ff_1\wdots\ff_\nn\rangle$, where~$\langle \ff_1\wdots\ff_\nn\rangle$ is the subalgebra of~$\MP\XX$ generated by~$\{\ff_1\wdots\ff_\nn\}$. In particular, we deduce~$\ff_{\mm}^{(2)}\in \Id\SS\subseteq \langle \ff_1\wdots\ff_\nn\rangle$ for every~$\mm\leq \nn$. Therefore,  we
have~$\ff_{\mm}^{(1)}\in\langle \ff_1\wdots\ff_\nn\rangle$ for every~$\mm\leq \nn$. And thus by Eq.~\eqref{sum2}, $\varphi$ is an epimorphism.

In particular, assume~$\ff_\ii^{(1)}=\sum_{\jj}\gamma_{\ii,\jj}\bb_\jj$. Then it is clear that~$\tl\varphi$ is an isomorphism of~$\kk\XX$ if and only if the determinant of the matrix~$(\gamma_{\ii,\jj})_{\nn\times\nn}$ is not zero. 
Finally, by Theorem~\ref{finite-GSB}, whether every~$\ff_{\mm}^{(2)}$ $(\mm\leq \nn)$ lies in the ideal~$\Id\SS$ is also decidable. The result follows.
\end{proof}
With the notations of Theorem~\ref{auto}, if one finds that~$\varphi$ is an isomorphism of~$\MP\XX$, then the next question is to ask whether~$\varphi$ is tame or not.
According to the introduction in~\cite{auto-leibniz}, we tend to believe that there are wild automorphism of~$\MP\XX$, because the subalgebra of~$\MP\XX$ generated by~$\aa\cdot\aa$ is clearly not a free metabelian Poisson algebra, where~$\aa$ is a letter in~$\XX$. 

We conclude the paper with the following problem: For a finitely generated free metabelian Poisson algebra over a computable field, does there exist an algorithm to decide whether an isomorphism is tame or not?

\newcommand{\noopsort}[1]{}

\end{document}